\documentclass[reqno]{amsart}

\usepackage{amsfonts}
\usepackage{amssymb}
\usepackage{amsmath}
\usepackage{mathrsfs}
\usepackage{amsthm}
\usepackage[colorlinks,linkcolor=blue,citecolor=blue]{hyperref}
\usepackage{enumitem}

\newtheorem{theorem}{Theorem}[section]
\newtheorem{corollary}[theorem]{Corollary}
\newtheorem{lemma}[theorem]{Lemma}
\newtheorem{proposition}[theorem]{Proposition}

\theoremstyle{remark}
\newtheorem{remark}[theorem]{Remark}

\newcommand{\abs}[1]{\left\lvert{#1}\right\rvert}
\newcommand{\norm}[1]{\left\lVert{#1}\right\rVert}
\newcommand{\dd}{\ensuremath{\text{d}}}
\newcommand{\R}{\ensuremath{\mathbb{R}}}
\newcommand{\N}{\ensuremath{\mathbb{N}}}
\renewcommand{\H}{{\mathbb{H}}}

\newcommand{\pd}[2]{\frac{\partial {#1}}{\partial {#2}}}
\newcommand{\ip}[2]{\ensuremath{\left< {#1}, {#2} \right>}}
\newcommand{\eps}{\ensuremath{\varepsilon}}

\newcommand{\bnabla}{\overline{\nabla}}
\newcommand{\bg}{\bar{g}}

\newcommand{\diag}[1]{\ensuremath{\operatorname{diag}\{ #1 \}}}

\theoremstyle{definition}
\newtheorem{definition}[theorem]{Definition}

\numberwithin{equation}{section}

\begin{document}

\title[Inverse curvature flow]
{Inverse Hessian Curvature Flow in Minkowski Space I: Cocompact Hypersurfaces}

\author[D. Li]{Dake Li}
\address[D. Li]{School of Mathematical Science\\
Fudan University\\
200433 Shanghai\\
P.R. China}
\email{dkli25@m.fudan.edu.cn}

\author[Z. Wang]{Zhizhang Wang}
\address[Z. Wang]{School of Mathematical Science\\
Fudan University\\
200433 Shanghai\\
P.R. China}
\email{zzwang@fudan.edu.cn}

\author[S. Yin]{Shiqi Yin}
\address[S. Yin]{School of Mathematical Science\\
Fudan University\\
200433 Shanghai\\
P.R. China}
\email{sqyin25@m.fudan.edu.cn}


\thanks{The second author is supported by NSFC Grant No.12141105.}

\keywords{Minkowski space, Inverse curvature flow, Quermassintegral inequality}

\begin{abstract}
	In this paper, we investigate the cocompact inverse \(\sigma_k\) curvature flow in Minkowski space. We prove the longtime existence and convergence of this flow. As a consequence, a quermassintegral inequality is established. 
\end{abstract}

\maketitle 

\section{Introduction}
Minkowski space \(\R^{n,1}\) is  \(\R^{n+1}\) endowed with the pseudo-metric 
\[ ds^2 = dx_1^2 +\cdots + dx_n^2 - dx_{n+1}^2. \] i.e. \(\ip{X}{Y} = \sum_{i=1}^{n}x_i y_i - x_{n+1}y_{n+1}\). A nonzero vector \(X \in \R^{n,1}\) is said to be spacelike, timelike, or null respectively if \(\ip{X}{X} > 0, < 0\) or \(=0\) respectively. Denote \(E_{n+1} = (0,\cdots,0,1)\in \R^{n+1}\). We regard \(x_{n+1}\) as the time function. If the  \((n+1)\)-th component of \(X\) is positive, or equivalently, \(\ip{X}{E_{n+1}} < 0\), we say that \(X\) is future directed. 
\par In this paper, we study the inverse Hessian curvature flow. 

\begin{definition}[ICF in Minkowski space]
    A family of time-dependent spacelike hypersurfaces \(M_t = X:M\times [0,T) \to \R^{n,1}\) is said to be an inverse curvature flow if it satisfies following equations:
    \begin{equation}\label{eq:ICF}
        \left\{
        \begin{aligned}
        &\pd{}{t}X(\cdot,t) = -\frac{1}{F(\cdot,t)}\nu(\cdot,t),\quad \forall t\in (0,T), \\
        &X(\cdot,0)=X_0(\cdot), 
        \end{aligned}
        \right.
    \end{equation}
    where \(F = F(h,g) = f(\kappa_1,\cdots,\kappa_n)\) is  the curvature function depending on the first and second fundamental forms $g,h$, or the principal curvatures $\kappa=(\kappa_1,\cdots,\kappa_n)$ of $M_t$.  
 Moreover, $f(\kappa)$ is strictly increasing with respect to each \(\kappa_i\), concave, and of homogeneous degree \(1\), normalized by \(f(1,1,\cdots,1) = 1\). \(\nu\) is the future directed (timelike) unit normal vector, and \(F > 0\) for all \((p,t)\in M\times [0,T)\). 
\end{definition}
In this paper we only study \(F = s_k^{1/k}\equiv (\sigma_k / \binom{n}{k})^{1/k}\), \(k=1,2.\cdots,n\), where the Hessian curvature $\sigma_{k}$ is defined by the $k$-th  elementary symmetric polynomial of the principal curvatures:
\begin{equation*}
	\sigma_{k}(\kappa)=\sum_{1\leq i_{1}< i_{2}<\cdots< i_{k}\leq n}\kappa_{i_{1}}\kappa_{i_{2}}\cdots\kappa_{i_{k}}.
\end{equation*}
For example, if \(F(p,t) = s_1(\kappa)(p,t) = \frac{g^{ij}h_{ij}}{n}\) is the mean curvature, then \eqref{eq:ICF} is called the \textit{inverse mean curvature flow}.

The curvature flow problem in Euclidean space has been extensively studied in the literature. In \cite{Hui}, Huisken studied the mean curvature flow, which has been generalized to Gauss curvature and a large family of curvature flows by Chow \cite{Chow} and Andrews \cite{Pinching-estimates}.  In \cite{Ger} and \cite{Ur}, Gerhardt and Urbas respectively considers the inverse curvature flow in Euclidean space, which is a similar flow equation as \eqref{eq:ICF} with the opposite sign. Roughly speaking, they have proved that, for any compact initial hypersurface $M_0$, the ICF will expand $M_0$ and it will converge to some sphere by a suitable scaling. For entire graphical flows,  Daskalopoulos-Huisken \cite{DH} studied the inverse mean curvature flow. 

In Minkowski space, Andrews, Chen, Fang and McCoy \cite{co-compact} first studied the curvature flow for cocompact hypersurfaces. More precisely, they consider the curvature flow:   
$$\pd{}{t}X(\cdot,t) = F(\cdot,t)\nu(\cdot,t),\quad \forall t\in (0,T), $$
 where $F$ is a homogenous degree 1 curvature function defined in Definition  \ref{eq:ICF}     
 and the initial hypersurface $M_0$ is spacelike, co-compact, and strictly convex. Aarons \cite{Aa}, Bayard-Schn\"{u}rer \cite{BS09} and Bayard \cite{B09} studied the mean curvature flow, Gauss curvature flow and scalar curvature flow with a forcing term in Minkowski space. For the non-compact hypersurfaces, the second author and Xiao \cite{WX22-1,WX22-2} studied the Hessian curvature self-expanders and the Hessian curvature flows of entire space-like hypersurfaces in Minkowski space. Then, Qu, the second author and Wo \cite{QWW24}  generalized a similar result to Hessian curvature translating solutions. Moreover, the second author and Xiao \cite{WX212} also studied some fully nonlinear curvature flows of non-compact spacelike hypersurfaces in Minkowski space. One may see \cite{RenWX19, RenWX20, WX20,WX21} for related prescribed curvature problem in Minkowski space.  

An important application of the inverse curvature flow is to prove quermassintegral inequalities. By using  \cite{Ger, Ur}, Guan-Li \cite{Guan2009} establishes the quermassintegral inequalities for star-shaped compact hypersurfaces in Euclidean space.  
In hyperbolic space, the inverse curvature flow and 
the quermassintegral inequalities were first studied by Wang-Xia \cite{WXia}, also see Hu-Li-Wei \cite{HLWei}. 
Brendle-Guan-Li \cite{BGL}  designed a locally constrained inverse curvature flow and studied the quermassintegral inequalities in space forms. In de Sitter space, Hu-Li \cite{HuLi} establishes the full quermassintegral inequalities for compact strictly hypersurfaces if its principal curvatures are less than 1. If we would like to remove the upper bound of the principal curvatures, they can prove part of quermassintegral inequalities. For more reference, the reader
may see \cite{CPLS, Sch, MaK, MaK2} and the reference therein.

\par In this paper, we discuss the ICF of standard co-compact hypersurfaces. Following \cite{co-compact}, we have 
\begin{definition}[standard co-compact]\label{defn:std co-compact}
A spacelike hypersurface $M$ in $\mathbb{R}^{n,1}$ is said to be standard co-compact if there exists a discrete subgroup $G$ of the future preserved Lorentz group $\mathrm{O}_+(\mathbb{R}^{n,1})$ that acts freely and properly discontinuously on $M$, such that the quotient Riemannian manifold $M/G$ is compact. Here 
$$ \begin{aligned}
    &\quad \mathrm{O}_+(\mathbb{R}^{n,1}) = \\
    & \Big\{\text{linear } T:\R^{n,1}\to \R^{n,1} \mid \ip{TX}{TY} = \ip{X}{Y}, \forall X,Y\in \R^{n,1}, \ip{TE_{n+1}}{E_{n+1}} < 0 \Big\}
\end{aligned} $$
is the future preserved Lorentz group. 
\end{definition} 
 All orientable closed surfaces with negative Gauss curvature can be isometric embedded into Minkowski space $\mathbb{R}^{2,1}$ as some spacelike strictly convex standard co-compact surfaces by a theorem of Chen-Yin \cite{ChYin}. Therefore, spacelike strictly convex standard cocompact hypersurfaces can be viewed as closed manifolds with negative sectional curvatures. Moreover, these manifolds have nontrivial topological structure induced by the group $G$.

\par A special class of solutions to equation \eqref{eq:ICF} are the self-shrinkers, which are defined by 
\begin{definition}[self-shrinker]
    A spacelike hypersurface \(X:M\to \R^{n,1}\) is called a self-shrinker of \eqref{eq:ICF} if it satisfies the equation 
    \begin{equation}\label{eq:self-shrinker}
        F = -\frac{1}{\ip{X}{\nu}}.
    \end{equation}
\end{definition}

It is obvious that the hyperboloid \(\H^n = \{X\in \R^{n,1}\mid \ip{X}{X} = -1\}\) satisfies the above equation. The self-shrinker defined here is corresponding to the self-shrinker of curvature flow for closed compact hypersurfaces in Euclidean space.  
We can prove the following rigidity result for any co-compact self-shrinker: 
\begin{theorem}\label{lem:rigidity}
    A $k$-convex co-compact spacelike self-shrinker of the ICF \eqref{eq:ICF}, where \(F = s_k^{1/k}\), must be a branch of hyperboloid \(\H^n\). 
\end{theorem}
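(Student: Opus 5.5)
Set $u=-\ip{X}{\nu}$, which is $G$-invariant, since $G\subset \mathrm{O}_+(\R^{n,1})$ is linear and preserves $\ip{\cdot}{\cdot}$ and future normals. It therefore descends to the compact quotient $M/G$. The self-shrinker equation \eqref{eq:self-shrinker} reads $F=1/u$, and since $F>0$ on a $k$-convex hypersurface we get $u>0$. The plan is a maximum principle argument for a suitable $G$-invariant quantity on $M/G$, showing that the umbilicity defect vanishes and that $\ip{X}{X}\equiv -1$.

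We use the standard Minkowski identities for the position vector. With the convention $\nabla_i\nu = h_i^{\,j}X_j$, we have
\[
\nabla_i u = -h_i^{\,j}\ip{X}{X_j},\qquad \nabla_i\nabla_j u = -h_{ij}^{\ \ ,l}\ip{X}{X_l}-h_{ij}-u\,h_i^{\,l}h_{lj}
\]
(up to sign conventions). Also $\nabla_i\nabla_j \tfrac12\ip{X}{X}=g_{ij}-u h_{ij}$. Let $L=F^{ij}\nabla_i\nabla_j$ be the linearized operator, with $F^{ij}=\partial F/\partial h_{ij}$. Homogeneity gives $F^{ij}h_{ij}=F$, and differentiating $F=1/u$ gives $F^{ij}h_{ij,l}=-u^{-2}\nabla_l u$. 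Hence
\[
Lu=\tfrac1{u^2}\ip{\nabla u}{\nabla\ip{X}{X}/2}\text{-type term}-F-u\,F^{ij}h_{il}h^l_j .
\]
For $\phi=\tfrac12\ip{X}{X}$ we get $L\phi=\sum F^{ii}-uF=\sum F^{ii}-1$. For $F=s_k^{1/k}$, Maclaurin gives $\sum F^{ii}\ge 1$, with equality iff the point is umbilic. So $L\phi\ge 0$, and $\phi$ is $G$-invariant (the group is linear Lorentz). By compactness of $M/G$ and the strong maximum principle, $\phi$ is constant. Hence $\sum F^{ii}\equiv 1$ everywhere, so $M$ is totally umbilic. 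Ellipticity of $L$ needs $k$-convexity, i.e.\ $\kappa\in\Gamma_k$, so that $(F^{ij})>0$.

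Umbilicity gives $h_{ij}=\lambda g_{ij}$, and Codazzi ($n\ge2$) makes $\lambda$ constant. A totally umbilic spacelike hypersurface with $\lambda>0$ is an open piece of a hyperboloid $\{\ip{X-c}{X-c}=-\lambda^{-2}\}$. We then use the constancy of $\phi$ together with $u=1/F=1/\lambda$. For the hyperboloid centered at $c$ we have $\nu=\lambda(X-c)$, so $u=-\lambda\ip{X}{X-c}$. Comparing with $u=1/\lambda$ gives $\ip{c}{X-c}=0$ along $M$. Since $X-c$ ranges over an open subset of the hyperboloid, this spans $\R^{n,1}$, so $c=0$ and $\lambda=1$. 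Completeness, which follows from co-compactness, then gives a full branch of $\H^n$. The case $\lambda=0$ (a hyperplane) is excluded by $F>0$.

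The main obstacle is the sign of $L\phi$. It relies on $\sum_i F^{ii}\ge F(1,\dots,1)=1$ for concave, degree-one $F$ normalized at $(1,\dots,1)$. This holds by concavity: $F(\mathbf 1)\le F(\kappa)+\sum F^{ii}(1-\kappa_i)=\sum F^{ii}$. The equality case needs strict concavity transverse to the radial direction, which $s_k^{1/k}$ has for $k\ge2$. For $k=1$ this fails, since $\sum F^{ii}=1$ identically. In that case we switch to the quantity $u\sqrt{-2\phi}$, or more simply run the maximum principle on $u^2+2\phi$ using the $Lu$ identity and $|A|^2\ge H^2/n$. The fallback is therefore to combine both identities into $L(\phi+\tfrac12u^2)\ge 0$ and handle gradient terms at the extremum.
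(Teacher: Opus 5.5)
Your core argument is correct and takes a different route from the paper. The paper works with $\ip{X}{\nu}=-u$. In your notation it proves
$Lu=u^{-2}\ip{X^\top}{\nabla u}+\bigl(u\sum_iF^{ii}\kappa_i^2-u^{-1}\bigr)$, using Codazzi and $\nabla F=-u^{-2}\nabla u$. It then shows the bracket is $\ge 0$ via $\sum_i\sigma_k^{ii}\kappa_i^2=\sigma_1\sigma_k-(k+1)\sigma_{k+1}$ and the Newton--Maclaurin inequalities $s_1\ge s_k^{1/k}$, $s_{k+1}\le s_k^{(k+1)/k}$. So $u$ is a subsolution with a drift term; the strong maximum principle makes $u$ constant, the bracket then vanishes, and equality in Newton--Maclaurin forces umbilicity. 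This works uniformly in $k$, including $k=1$. Your test function $\phi=\tfrac12\ip{X}{X}$ has $\nabla^2\phi=g-uh$, so $L\phi=\sum_iF^{ii}-1\ge 0$ with no drift term and no Codazzi or $\sigma_k$ identities. It uses only concavity, homogeneity, the normalization $F(1,\dots,1)=1$ and ellipticity, so it applies to any such $F$. That makes it shorter and more general than the paper's computation, which is tied to the algebra of $s_k$.

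Two things need fixing.

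\textbf{The case $k=1$.} Your argument does not fail there, and the fallback you offer is not a proof. It is only sketched, and it rests on a formula for $Lu$ whose quadratic term has the wrong sign. The correct term is $+u\,F^{ij}h_{il}h^l_j$, as the check $\nabla^2u=0$ on $\H^n(r)$ shows, and it is exactly the term the paper controls. With the correct sign and $\psi=\phi+\tfrac12u^2=\tfrac12|X^\top|^2$, one gets for $k=1$
$L\psi-u^{-1}\ip{\nabla u}{\nabla\psi}=\tfrac{u^2}{n}|\mathring{A}|^2-(1-\tfrac1n)|\nabla u|^2$, which has no sign, so the fallback does not close as stated.

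None of this is needed. The strong maximum principle makes $\phi$ constant also when $L\phi\equiv 0$. Then $0=\nabla\phi=X^\top$ and $0=\nabla^2\phi=g-uh$. Hence $X=u\nu$, $\ip{X}{X}=-u^2$ is constant, and $M$ lies in the future branch of $\{\ip{X}{X}=-u^2\}$, for every $k$. This also makes unnecessary the equality case of Maclaurin, the Codazzi step, and the computation of the center $c$.

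\textbf{The radius.} Your conclusion $\lambda=1$ does not follow. The self-shrinker equation is invariant under $X\mapsto rX$, and every $\H^n(r)$ solves it, since $\nu=X/r$, $u=r$ and $F=1/r$. The correct conclusion is a branch of a hyperboloid $\H^n(r)$ centered at the origin. That is how the statement has to be read; compare the limit $\H^n(r_\infty)$ in Theorem~\ref{thm:convergence}.
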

 Here $k$-convex means: 
 \begin{definition}
\label{intdef1}
A $C^2$ regular hypersurface $M\subset \R^{n, 1}$ is $k$-convex,
if the principal curvatures of $M$ at $X\in M$ satisfy $\kappa[X]\in\Gamma_k$ for all $X\in M$, where $\Gamma_k$
is the G\r{a}rding cone
\[\Gamma_k=\{\lambda\in\R^n|\sigma_m(\lambda)>0, m=1, \cdots, k\}.\]
\end{definition} 
 
 One may compare Theorem  \ref{lem:rigidity} with Gao-Li-Wang \cite{GaoLiW}. 
Let 
$$u(x,t)=\sqrt{e^{-2t}+|x|^2}.$$  
One may easily check that $X_t=(x,u(x,t))$ satisfying \eqref{eq:ICF}. As $t$ goes to infinity, we can see the hypersurface $M_t$ convergences to the light cone, whose principal curvatures are infinite. Now we take a normalized flow 
$$\tilde{X}_t=e^t X_t,$$   
then $\tilde{X}_t$ convergences to the hyperboloid. In general, similar to the above baby model, we can prove:   

\begin{theorem}\label{thm:convergence}
    Let \(F = s_k^{1/k}\) in the ICF \eqref{eq:ICF}, where \(s_k = {\sigma_k}/{\binom{n}{k}}\) is the normalized k-th elementary symmetric polynomial. If the initial spacelike hypersurface is strictly convex and standard co-compact, then the flow \eqref{eq:ICF} has a unique long-time solution \(X: M\times [0,+\infty)\to \R^{n,1}\). After scaling by \(\tilde{X}(p,t) = e^t X(p,t)\), this solution converges in \(C^{\infty}\)-topology to the upper branch of a hyperboloid \(\H^n(r_\infty)\) with radius \(r_{\infty}\). In addition, the limit hyperboloid \(M^{\infty}\) satisfies the volume relation: 
    \begin{equation*}
        \operatorname{Vol} (M^{\infty}/G) \leq \operatorname{Vol} (M_0/G),
    \end{equation*}
    with equality when \(F(\kappa) = s_1(\kappa)\). 
\end{theorem}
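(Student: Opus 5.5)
Since the limit object is the hyperboloid, the natural approach is to parametrize the flow by the support function or the ``hyperbolic radius'' over \(\H^n/G\). For a strictly convex standard co-compact hypersurface, the Gauss map \(\nu: M\to \H^n\) is a \(G\)-equivariant diffeomorphism. We therefore write \(M_t\) through its support function \(u(z,t) = -\ip{X}{z}\) for \(z\in\H^n\); this \(u\) is \(G\)-invariant and so lives on the compact manifold \(\H^n/G\). The principal radii are the eigenvalues of \(A[u]=\nabla^2 u - u\,\sigma\) on \(\H^n\) (sign conventions to be checked), and convexity means \(A[u]>0\). With \(F=s_k^{1/k}(\kappa)\), the flow \eqref{eq:ICF} becomes the scalar parabolic equation
\[
\partial_t u = \frac{1}{F} = \Big(\frac{s_n}{s_{n-k}}(A[u])\Big)^{1/k},
\]
posed on a compact manifold without boundary. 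This operator is concave and increasing in \(A[u]\). Short-time existence and uniqueness then follow from standard parabolic theory.

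\textbf{A priori estimates.}
\begin{enumerate}
\item \emph{\(C^0\).} Comparison with the explicit solutions \(u=r(t)\), i.e. hyperboloids \(\H^n(r)\), for which \(r'=r\) and hence \(r=r_0e^t\), gives \(c_1e^t\le u\le c_2e^t\).
\item \emph{Scaling.} Setting \(\tilde u=e^{-t}u\) (matching \(\tilde X=e^tX\) up to the convention on \(t\)) yields a normalized equation with \(\tilde u\) bounded above and below.
\item \emph{Speed.} Differentiating the equation in \(t\) and applying the maximum principle gives \(C^1\) and speed bounds, i.e. \(F\) bounded above and below after rescaling.
\item \emph{Curvature.} A \(C^2\) bound is obtained from a maximum principle argument on the largest eigenvalue of \(A[\tilde u]\), using concavity of the operator.
\item \emph{Preserving convexity.} We also need an upper bound on \(\kappa\), i.e. a lower bound on the radii, so that \(A[\tilde u]>0\) is preserved. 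I expect this to be the main obstacle. I would first try applying the maximum principle to the inverse-concavity form of \(\sigma_n/\sigma_{n-k}\), working with the test function \(\kappa_{\max}/F\) or \(\lambda_{\min}^{-1}\cdot\tilde u\) on the compact quotient, as in Gerhardt–Urbas.
\end{enumerate}
Given these bounds, Krylov–Safonov and Evans–Krylov yield \(C^{2,\alpha}\) estimates and Schauder theory gives all higher derivatives. This establishes long-time existence.

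\textbf{Convergence.} To show \(\tilde u\) converges to a constant, I would show that \(\operatorname{osc}\tilde u\) decays exponentially. The linearization of the normalized equation at a constant has only the kernel of constants. Equivalently, \(\max\tilde u\) is nonincreasing and \(\min\tilde u\) is nondecreasing, and a Harnack-type argument on the spatial derivative \(\nabla\tilde u\) gives \(|\nabla \tilde u|\le Ce^{-\delta t}\). Interpolating with the higher-order bounds then gives \(C^\infty\) convergence to a constant \(r_\infty\), i.e. to \(\H^n(r_\infty)\). This also shows that the rigidity statement Theorem \ref{lem:rigidity} is consistent with the limit: any stationary state of the normalized flow is a self-shrinker, and hence a hyperboloid.

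\textbf{Volume relation.} Compute \(\frac{d}{dt}\operatorname{Vol}(M_t/G)\) along \eqref{eq:ICF}. Since the normal speed is \(1/F\), we get \(\frac{d}{dt}\operatorname{Vol}=\int_{M_t/G} \frac{nH'}{F}\), where \(H'=s_1\) with the appropriate sign for future-directed \(\nu\). Under the rescaling this gives
\[
\frac{d}{dt}\operatorname{Vol}(\tilde M_t/G)=\int n\Big(\frac{s_1}{s_k^{1/k}}-1\Big)\,d\tilde\mu .
\]
This is \(\ge 0\) or \(\le 0\) by Maclaurin's inequality \(s_k^{1/k}\le s_1\), with the sign fixed by the Lorentzian orientation, so that the volume is monotone nonincreasing. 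When \(k=1\) the integrand vanishes identically, which gives equality. Taking \(t\to\infty\) and using the smooth convergence yields \(\operatorname{Vol}(M^\infty/G)\le\operatorname{Vol}(M_0/G)\).
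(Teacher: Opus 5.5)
Your outline has the right architecture (support function on $\H^n/G$, comparison with hyperboloids, speed bounds, a bound on the largest principal radius, Evans--Krylov, convergence, volume monotonicity). However, the step that carries the proof is asserted with a mechanism that fails, and you have placed the difficulty in the wrong step.

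In the normalized flow (tildes dropped), set $\mathscr{L}=\partial_t-\dot F_*^{kl}\bnabla_k\bnabla_l$ and $Z=\dot F_*(\bg)-1\ge 0$. Then $\mathscr{L}u=-Zu$ and
\[
\mathscr{L}\tau_{ij}=\ddot F_*(\bnabla_i\tau,\bnabla_j\tau)-2F_*\bg_{ij}+(Z+2)\tau_{ij}.
\]
At a maximum of $\lambda_1$, concavity only removes the $\ddot F_*$ term. What remains is the reaction term $(Z+2)\lambda_1-2F_*\ge Z\lambda_1\ge 0$, since $F_*\le\lambda_1$. So a maximum principle on the largest eigenvalue alone gives no bound; this is the obstruction noted in Remark~\ref{rem:curvature-estimate}.

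The missing idea is to couple $\lambda_1$ with the support function. For $\alpha\log u+\log\lambda_1$, the factor $u^\alpha$ contributes $-\alpha Z$, and at a maximum one gets
\[
0\le\alpha(\alpha+1)\frac{F_*|\bnabla u|^2}{\lambda_1u^2}-\big((\alpha-1)Z-2\big).
\]
This closes only with two inputs absent from your plan:
\begin{itemize}
\item the gradient bound $|\bnabla u|\le u$ (Proposition~\ref{prop:gradient-estimate});
\item the algebraic Lemma~\ref{lem:algebraic}, which gives $Z\ge\delta>0$ whenever $\lambda_1\ge 2\sup F_*$. It rests on strict concavity of $(s_n/s_{n-k})^{1/k}$ transverse to the diagonal and on $0$-homogeneity of $Z$.
\end{itemize}
Then any $\alpha>1+2/\delta$ bounds $\lambda_1$. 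Conversely, your step 5, which you call the main obstacle, is immediate once $\lambda_1\le C$ and $F_*\ge c>0$: the inequality $F_*^k=s_n/s_{n-k}\le c(n,k)\lambda_1^{k-1}\lambda_n$ gives $\lambda_n\ge c'>0$ with no extra test function.

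The lower bound $F_*\ge c$ is itself not delivered by your step 3 as described. For the unnormalized flow, differentiating in $t$ gives $\partial_tF_*=\dot F_*^{kl}\bnabla_k\bnabla_lF_*-\dot F_*(\bg)F_*$. Its zeroth-order coefficient is unbounded before curvature bounds are known, so you get $F_*\le Ce^{-t}$ but not $F_*\ge ce^{-t}$. The paper uses the fact that $u$ solves the same linear equation. Hence $u/F_*$ satisfies a drift-only equation and stays between its initial extrema.

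For convergence, linearizing at a constant is only a local statement, and the Harnack claim for $\bnabla\tilde u$ is unsupported. The paper argues as follows:
\begin{itemize}
\item it shows $\operatorname{osc}(\log u-\log F_*)\to 0$ by compactness and the strong maximum principle;
\item it deduces $\partial_t\tilde u\to 0$;
\item it identifies subsequential limits as self-shrinkers, hence hyperboloids by Theorem~\ref{lem:rigidity};
\item it fixes the radius by the limiting volume.
\end{itemize}
Your monotonicity of $\max\tilde u$ and $\min\tilde u$ can also be completed directly. Along $t_n\to\infty$ the eternal limit has time-independent maximum and minimum, and the strong maximum principle for $\mathscr{L}\tilde u_\infty=-Z\tilde u_\infty\le 0$ then forces $\max=\min$. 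Either route needs the uniform estimates above.

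Finally, fix the orientation. With future-directed $\nu$ and $u=-\ip{X}{z}$, the radii are the eigenvalues of $u\bg-\bnabla^2u$, and $\partial_tu=-F_*(u\bg-\bnabla^2u)$. So hyperboloid radii decay like $e^{-t}$, which is why $\tilde X=e^tX$ is the right normalization. Your $\partial_tu=+1/F$ with $u\sim e^t$ reverses time; with the correct radii tensor it would be backward parabolic. Likewise $\partial_t\, d\tilde\mu=n\big(1-s_1/s_k^{1/k}\big)\,d\tilde\mu\le 0$, with no sign ambiguity.
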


Similar to Guan-Li \cite{Guan2009}, we can prove the quermassintegral inequalities by using the above theorem:
\begin{theorem}\label{thm:quermassintegral_ineq}
    Let \(M\) be a standard co-compact convex spacelike hypersurface with related Lorentz group \(G\), \(\overline{M} = M / G\). Let \(\abs{\overline{M}} := \int_{\overline{M}} 1 \dd \mu_M\) be the \(n\)-dimensional volume of $\overline{M}$. Then for \(1\leq k \leq n-1\), we have
    \begin{equation}\label{eq:geo-ineq}
         {\left( \int_{\overline{M}} \sigma_k(\kappa) \dd \mu_M \right)^{\frac{1}{n-k}}} \leq {(C_n^k)^{\frac{1}{n-k}}\abs{\H^n / G}^{\frac{1}{n-k} - \frac{1}{n}}} \cdot \abs{\overline{M}}^{\frac{1}{n}}.
    \end{equation}
    The equality holds if and only if \(M\) is exactly a part of \(\H^n\). 
\end{theorem}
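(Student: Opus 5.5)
Following Guan--Li, we run the flow \eqref{eq:ICF} with \(F = s_k^{1/k}\) from \(M_0 = M\), first assuming \(M\) strictly convex. The general convex case then follows by approximation: push \(M\) slightly along an auxiliary flow, or replace it by a nearby \(G\)-invariant strictly convex hypersurface, and pass to the limit in \eqref{eq:geo-ineq}. We then track two quantities on the compact quotient \(\overline{M_t} = M_t/G\):
\[
A(t) = \int_{\overline{M_t}} \sigma_k \,\dd\mu_t, \qquad V(t) = \abs{\overline{M_t}}.
\]
The aim is to show that the scale-invariant ratio
\[
Q(t) = \frac{A(t)^{1/(n-k)}}{V(t)^{1/n}}
\]
is monotone nondecreasing in \(t\). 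We then evaluate \(\lim_{t\to\infty} Q(t)\) using Theorem \ref{thm:convergence}. On a hyperboloid \(\H^n(r)\) we have \(\sigma_k = C_n^k r^{-k}\) and \(\abs{\H^n(r)/G} = r^n\abs{\H^n/G}\). Substituting, the limit ratio equals \((C_n^k)^{1/(n-k)}\abs{\H^n/G}^{1/(n-k)-1/n}\), which is exactly the constant in \eqref{eq:geo-ineq}.

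\textbf{Step 1: evolution equations.} Let the normal speed be \(\phi = 1/F\) in the timelike direction. With the sign conventions of the paper, we first derive \(\partial_t \dd\mu = \phi H\,\dd\mu\), where \(H=\sigma_1\) and the sign is fixed by the model solution \(u=\sqrt{e^{-2t}+|x|^2}\), on which the area of the quotient grows. Using the divergence-free property of the Newton tensor \(T_{k-1}\) in flat ambient space, we then get
\[
\partial_t\!\int \sigma_k\,\dd\mu = (k+1)\int \sigma_{k+1}\phi\,\dd\mu,
\]
again up to the sign conventions. There is no boundary term because the integrals are over the compact quotient and all quantities are \(G\)-invariant. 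With \(\phi = (C_n^k/\sigma_k)^{1/k}\), these become \(V' = \int H\phi\) and \(A' = (k+1)\int \sigma_{k+1}\phi\).

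\textbf{Step 2: monotonicity.} By Newton--Maclaurin, \(s_1 \ge s_k^{1/k}\), so \(H\phi \ge n\) and hence \(V' \ge nV\). This is an inequality in the wrong direction for what we want, so it must be balanced against \(A'\). Newton--Maclaurin in the form \(s_{k+1} \le s_k^{(k+1)/k}\) gives \(\sigma_{k+1}\phi \le \frac{n-k}{k+1}\sigma_k\), hence \(A' \le (n-k)A\).

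Combined naively, these two bounds give \(Q\) \emph{nonincreasing}, so \(Q(0) \ge Q(\infty)\). Before committing, I have to check this against the direction of \eqref{eq:geo-ineq}. The inequality bounds \(A^{1/(n-k)}\) from above by the constant times \(V^{1/n}\), i.e. it asserts \(Q(0) \le Q(\infty)\). So the naive combination points the wrong way, and the correct pairing of the Newton--Maclaurin inequalities must be determined by carefully fixing the signs in Step 1. Lorentzian convexity reverses several Euclidean signs: for example, spacelike hypersurfaces in the future timelike cone have \(\langle X,\nu\rangle<0\), and the Minkowski formulas carry opposite signs.

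If the signs do not close directly, I would change the choice of flow. The alternative is to use the flow with \(F = s_{k}/s_{k-1}\), or with \(F=s_{k+1}^{1/(k+1)}\) relative to \(\sigma_k\), picking whichever makes \(Q\) nondecreasing. Fixing these signs is the main obstacle. A consistency check is available: in the equality case of Theorem \ref{thm:convergence} with \(F = s_1\), the volume is preserved after scaling.

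\textbf{Step 3: rigidity.} If equality holds in \eqref{eq:geo-ineq}, then \(Q\) is constant along the flow. This forces equality in Newton--Maclaurin at every point, so all principal curvatures are equal and \(M\) is umbilic. A connected spacelike umbilic hypersurface with positive curvature in \(\R^{n,1}\) is a piece of a hyperboloid. Since \(M\) is invariant under \(G \subset \mathrm{O}_+\), this hyperboloid must be centered at the origin and, after scaling, equal to \(\H^n\).
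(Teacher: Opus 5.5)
\textbf{Gap: the signs in Step 1 are reversed, so the monotonicity of $Q$ is never established.} Both evolution equations in your Step 1 have the wrong sign, and that is the only reason Step 2 ``points the wrong way''. As written, the monotonicity of $Q$, which is the heart of the proof, is left open. Switching flows does not fix the real problem. Moreover, $F=s_k/s_{k-1}$ is not available: its long-time behaviour is exactly the open question raised in the introduction, because no curvature upper bound is known.

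Your reading of the model solution is the source of the error. The solution $u=\sqrt{e^{-2t}+\abs{x}^2}$ satisfies $\langle X,X\rangle=-e^{-2t}$, so it is $\H^n(e^{-t})$. It collapses toward the light cone, its curvatures blow up, and the quotient area $e^{-nt}\abs{\H^n/G}$ \emph{decreases}. Your own consistency check would have caught this: with $F=s_1$, the rescaled area $e^{nt}V$ is constant only if $V'=-nV$.

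The correct equations follow from Proposition~\ref{prop:evolution_geo_quantaties}, which gives $\partial_t g_{ij}=-2F^{-1}h_{ij}$ for $\partial_t X=-F^{-1}\nu$. Combining this with the divergence-free Newton tensor and the identity $\sum_i\sigma_{k-1}(\kappa|i)\kappa_i^2=\sigma_1\sigma_k-(k+1)\sigma_{k+1}$, one gets
\[
V'=-\int_{\overline{M_t}}\frac{\sigma_1}{F}\,\dd\mu,\qquad A'=-(k+1)\int_{\overline{M_t}}\frac{\sigma_{k+1}}{F}\,\dd\mu .
\]

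\textbf{With the correct signs your argument closes.} Your own naive combination now works at once. The inequality $s_1\ge s_k^{1/k}$ gives $V'\le -nV$, and $s_{k+1}\le s_k^{(k+1)/k}$ gives $A'\ge-(n-k)A$. Hence
\[
\frac{\dd}{\dd t}\log Q=\frac{A'}{(n-k)A}-\frac{V'}{nV}\ge -1+1=0,
\]
so $Q$ is nondecreasing and $Q(0)\le\lim_{t\to\infty}Q(t)$. Your computation of the limiting constant then yields \eqref{eq:geo-ineq} with the original flow $F=s_k^{1/k}$.

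This is precisely the paper's proof. The paper phrases it for the rescaled hypersurfaces $e^tM_t$: there $\int\sigma_k$ is nondecreasing and the area is nonincreasing, so the scale-invariant ratio moves in the right direction.

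Your rigidity step then goes through as you describe. Both terms in the derivative above are nonnegative, so under equality they must vanish at $t=0$, which forces umbilicity. Equality holds for every $\H^n(r)$, since $Q$ is scale invariant.
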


\begin{remark}
When $k=n$, the inequality \eqref{eq:geo-ineq}  becomes
\begin{equation}
\int_{\overline{M}} \sigma_n(\kappa) d\mu_M = |\mathbb{H}^n/G|.
\end{equation}
Geometrically, $\sigma_n(\kappa)d\mu_M$ corresponds to the volume form of the hyperbolic space $\mathbb{H}^n$ pulled back via the Gauss map $N: M \to \mathbb{H}^n$. Since the Gauss map induces a global diffeomorphism between the compact quotients $\overline{M}$ and $\mathbb{H}^n/G$, the total integral of the Gauss-Kronecker curvature $\sigma_n(\kappa)$ equals to the volume of $\mathbb{H}^n/G$, which is called the {\em hyperbolic volume}.
\end{remark}

\begin{remark}
    By Mostow's theorem (see \cite{Benedetti1992}), the hyperbolic volume \(\abs{\H^n / G}\) on the right-hand side of \eqref{eq:geo-ineq} is a topological invariant of \( M/G \).
\end{remark}
\begin{remark}
    When \(n\) is an even number, the hyperbolic volume can be explicitly expressed by the Euler Characteristic (\cite{Ratcliffe2019} Theorem 11.3.2, Gauss-Bonnet Theorem)
    \[ |\H^n / G| = \frac{(-1)^{\frac{n}{2}} \cdot \text{Vol}(S^{n})}{2} \chi(\H^n / G), \]
    where \(\text{Vol}(S^{n})\) is the Euclidean volume of the $n$-dimensional unit sphere. Since \(\H^n / G\) is homeomorphic to \(M / G\) under the flow, \(\chi(\H^n / G) = \chi(M / G)\).
When \(n\) is an odd number, we do not have an explicit formula for the hyperbolic volume. However, it is still a topological invariant of \(M / G\). 
\end{remark}

\begin{remark}\label{GuanYuan}
    As Guan and Yuan \cite{guan2025-intrinsicality} pointed, if \(M^n\) is a hypersurface in  \(\mathbb{R}^{n+1}\), each \(\sigma_k(\kappa[M])\) is intrinsic. We will give an analogous result in Minkowski space (Lemma \ref{lem:intrincity}). 
\end{remark}

Since all known quermassintegral inequalities hold for topological spheres, we think our inequalities are the first quermassintegral inequalities for manifolds with nontrivial topological structure, i.e. ``high genus" manifolds.  Therefore, we believe that inequalities \eqref{eq:geo-ineq}  are  interesting .          

At last, let's give two questions. 
A compact manifold with negative sectional curvature is not necessarily able to be embedded in Minkowski space as a spacelike hypersurface of codimension \(1\). Moreover, by Remark \ref{GuanYuan}, \(\sigma_k(\kappa)\) can be expressed as a polynomial of the Riemannian curvature tensor \(Riem_g\), then the integral in \eqref{eq:geo-ineq} is still meaningful.
Therefore, it is a natural question that 
does the geometric inequalities \eqref{eq:geo-ineq} hold for all closed manifolds \((N,g)\) with negative sectional curvature? 

Another question is that can we extend Theorem \ref{thm:convergence} to $F=\frac{\sigma_k}{\sigma_{k-1}}$ and then give the full quermassintegral inequalities as \cite{Guan2009} ? The difficulty here is that we can not give the upper bound of the principal curvatures in Minkowski space. In our second paper \cite{LiWYin}, we can give some counterexamples for non cocompact hypersurfaces. In fact, we can construct some strictly convex initial hypersurface $M_0$, and the flow equation has a solution $M_t$ such that, at finite time, the curvatures will blow up. However, we do not find appropriate counterexample for cocompact case. Note that, in \cite{HuLi},  a similar difficulty occurs in de Sitter space.


The organization of the paper is as follows. Section \ref{sec:rigidity} gives the proof of rigidity of self-shrinkers. In Section \ref{sec:evolution}, we will derive the evolution equations of geometric quantities. In Section \ref{sec:gauss-map}, we will rewrite our flow equation in hyperboloid by Gauss map. In Section \ref{sec:unnormalized-flow}, we establish basic estimates for the unnormalized flow, including the bounds for the support function, the speed function, and the gradient of the support function. In Section \ref{sec:normalized-flow}, we derive the normalized flow equation and establish the uniform curvature estimates, which subsequently guarantees the long-time existence of the normalized and unnormalized flows. Section \ref{sec:convergence} proves the convergence of the normalized flow. In the last Section, we establish the quermassintegral inequalities and prove that $\sigma_k$ curvatures also are intrinsic in Minkowski space.

\section{Rigidity of Self-shrinkers}\label{sec:rigidity}

Let \(\{x_1,\cdots,x_n\}\) be a local coordinate of \(M\). Denote \(X_i := \pd{X}{x_i}, X_{ij} := \frac{\partial^2 X}{\partial x_i \partial x_j}, \nu_j \) \(:= \pd{\nu}{x_j}\), then we have the Gauss-Weingarten formula: 
\begin{equation}\label{eq:GW-formula}
    \left\{
    \begin{aligned}
    &X_{ij} = \Gamma_{ij}^k X_k + h_{ij}\nu, \\
    &\nu_j = A_j^k X_k, 
    \end{aligned}
    \right.
\end{equation}
where \(A_j^k = g^{kl}h_{lj}\) is the shape operator,   $\Gamma_{ij}^k$ is the Christoffel symbol and \(\nu\) is the future directed (timelike) unit normal vector.

The main part of this section is to prove Theorem \ref{lem:rigidity}.  

\textbf{Proof of Theorem }{\ref{lem:rigidity}}: 
    Let \(\phi := \ip{X}{\nu}\). We will show that 
    \[ \sum_{i,j=1}^{n} F^{ij}\phi_{ij} \leq \frac{1}{\phi^2}\ip{X}{\nabla \phi}, \]
    where \(F^{ij} = \pd{F}{h_{ij}}\). For a fixed point \(p\in M\), choose a local orthonormal frame \(\{e_1,\cdots,e_n\}\) such that \(\nabla e_j(p) = 0, \forall j=1,2,\cdots,n\) and \(F^{ij}(p) = \diag{F^{11},\cdots,F^{nn}}\). Then, at \(p\) we have 
    \[ \begin{aligned}
        \phi_i :&= \nabla_{e_i}\phi = \sum_{j=1}^n h_{ij}\ip{X}{e_j},\\
        \phi_{ii} &= \sum_{j=1}^{n}\nabla_i h_{ij}\ip{X}{e_j} + \sum_{j=1}^{n}h_{ij}\ip{e_i}{e_j} + \sum_{j=1}^{n}h_{ij}h_{ji}\ip{X}{\nu} \\
        &= \sum_{j=1}^{n}\nabla_j h_{ii}\ip{X}{e_j} + h_{ii} + \kappa_i^2\ip{X}{\nu},
    \end{aligned} \]
    where we have used Codazzi equation \(\nabla_i h_{ij} = \nabla_j h_{ii}\). By differentiating the self-shrinker equation \eqref{eq:self-shrinker}, we have \(\nabla F = \frac{1}{\phi^2}\nabla \phi\). Then we have 
    \begin{equation}\label{eq:rigid}
        \begin{aligned}
            \sum_{i=1}^{n} F^{ii} \phi_{ii} &= \sum_i F^{ii}\sum_j \nabla_j h_{ii}\ip{X}{e_j} + \sum_i F^{ii}h_{ii} + \sum_i F^{ii}\kappa_i^2 \ip{X}{\nu} \\
            &= \nabla_j F \ip{X}{e_j} + F + \sum_i F^{ii}\kappa_i^2 \ip{X}{\nu} \\
            &=  \ip{X}{\nabla F} + F + \sum_i F^{ii}\kappa_i^2 \ip{X}{\nu} \\
            &= \frac{1}{\phi^2}\ip{X}{\nabla \phi} + \left( F - \frac{1}{F}\sum_i F^{ii}\kappa_i^2\right), 
        \end{aligned}
    \end{equation} 
    where we have used Euler's identity \(\sum_{i,j=1}^{n}F^{ij}h_{ij} = F\) in the second equality, and the equation \eqref{eq:self-shrinker} again in the last equality. Since \(F = s_k^{1/k}\), \(F^{ii} = \frac{1}{k}s_k^{1/k-1}s_k^{ii}\),  we have 
    \[ \frac{1}{F}\sum_i F^{ii}\kappa_i^2 = \frac{1}{k s_k}\sum_i s_k^{ii}\kappa_i^2. \]
    From the well-known identity  
    \[ \begin{aligned}
        \sum_{i=1}^n \sigma_k^{ii}(\kappa)\kappa_i^2 &= \sum_{i=1}^n \sigma_{k-1}(\kappa | i) \kappa_i^2 \\
        &= \sigma_1(\kappa)\sigma_k(\kappa) - (k+1)\sigma_{k+1}(\kappa),
    \end{aligned}  \]
    we deduce that 
    \[ \sum_{i=1}^{n} s_k^{ii}\kappa_i^2 = ns_1(\kappa)s_k(\kappa) - (n-k)s_{k+1}(\kappa). \]
    By Newton-Maclaurin's inequality, \(s_1 \geq s_k^{1/k}\), \( s_{k+1} \leq s_k^{(k+1)/k} \), and the equality holds if and only if all principal curvatures are equal to each other. We conclude that \(F - \frac{1}{F}\sum_i F^{ii}\kappa_i^2 \leq 0\) and then \(\sum_{i,j=1}^{n} F^{ij}\phi_{ij} \leq \frac{1}{\phi^2}\ip{X}{\nabla \phi}\) from \eqref{eq:rigid}. By strong maximum principle, \(\phi \) must be constant on \(M\). Then \(\nabla^2\phi = \nabla \phi \equiv 0\), which implies \(F - \frac{1}{F}\sum_i F^{ii}\kappa_i^2 \equiv 0\), then \(M\) is totally umbilical. Thus \(M\) is a branch of hyperboloid \(\H^n\). 
\qed 

\section{Evolution Equations}\label{sec:evolution}
In a local coordinate system \(\{x_1,\cdots, x_n\}\), the first and second fundamental forms are defined by 
\[ g_{ij} = \ip{X_i}{X_j}, \quad h_{ij} = -\ip{\nabla_i \nabla_j X}{\nu}. \]
Here \(\nabla_i \nabla_j X = X_{ij} - \Gamma_{ij}^k  X_k\) is the covariant derivative.
\par For a spacelike hypersurface \((M^n,g)\) in \(\R^{n,1}\), let \(\nabla\) be its Levi-Civita connection. The Riemannian curvature tensor is defined by
\begin{equation}\label{eq:Riem-def}
    {R}(X,Y)Z = -\nabla_X\nabla_Y Z + \nabla_Y\nabla_X Z + \nabla_{[X,Y]}Z. 
\end{equation}
By using a local frame $\{e_1,\cdots,e_n\}$, we define 
\begin{equation}
    {R}_{ijkl} = g(R(e_i,e_j)e_k,e_l), \quad {R}_{ikj}^{\quad p} = {R}_{ikjm}g^{mp}.
\end{equation}
Recall the following two fundamental geometric equations: 
\begin{equation}\label{eq:Gauss-eq}
    R_{ijkl} = -(h_{ik}h_{jl} - h_{il}h_{jk}) \quad \text{(Gauss equation)}.
\end{equation}
\begin{equation}\label{eq:Codazzi-eq}
    \nabla_k h_{ij} = \nabla_j h_{ik} \quad \text{(Codazzi equation)}.
\end{equation}

Let \(S(p,t):=\frac{1}{F}(p,t)\). We will denote  $\dot{F}^{ij}=\frac{\partial F}{\partial h_{ij}}$, $\ddot{F}^{ij,kl}=\frac{\partial^2 F}{\partial h_{ij} \partial h_{kl}}$
and $\dot{S}^{ij}=\frac{\partial S}{\partial h_{ij}}$, $\ddot{S}^{ij,kl}=\frac{\partial^2 S}{\partial h_{ij} \partial h_{kl}}$. We firstly derive evolution equations for some geometric quantities.
\begin{proposition}\label{prop:evolution_geo_quantaties}
    Under ICF \eqref{eq:ICF}, the following evolution equations hold:
    \begin{enumerate}[label=(\arabic*)]
        \item \(\displaystyle \pd{}{t}g_{ij} = -2Sh_{ij}\),
        \item \(\displaystyle \pd{}{t}\nu = -\nabla S\),
        \item \(\displaystyle \pd{}{t}h_{ij} = -\nabla_i \nabla_j S - SA_i^p A_j^q g_{pq}\),
        \item \(\pd{}{t}F = F^{-2}\dot{F}^{ij}\nabla_i\nabla_j F - 2F^{-3}\dot{F}^{ij}\nabla_i F \nabla_j F + F^{-1}\dot{F}(h^2)\). 
    \end{enumerate}
\end{proposition}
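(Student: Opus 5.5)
The plan is to differentiate the defining relations $g_{ij}=\ip{X_i}{X_j}$, $\ip{\nu}{\nu}=-1$, $\ip{\nu}{X_i}=0$ and $h_{ij}=-\ip{X_{ij}}{\nu}$ along the flow $\pd{}{t}X=-S\nu$. Throughout we use the Gauss--Weingarten formulas \eqref{eq:GW-formula}, and we are careful with the Lorentzian sign $\ip{\nu}{\nu}=-1$. Note that $\ip{\nu_j}{X_i}=A_j^k g_{ki}=h_{ij}$, which is consistent with $h_{ij}=-\ip{X_{ij}}{\nu}$ because $\ip{X_i}{\nu}=0$. Since time and space derivatives commute, $\partial_i \pd{X}{t}=-S_i\nu-SA_i^kX_k$. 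All four identities then follow in order, each one feeding into the next.

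\emph{Steps (1) and (2).} For (1), I compute $\pd{}{t}g_{ij}=\ip{\partial_i X_t}{X_j}+\ip{X_i}{\partial_j X_t}$. The $\nu$-components drop out because $\ip{\nu}{X_j}=0$, leaving $-S(h_{ij}+h_{ji})=-2Sh_{ij}$. For (2), I differentiate $\ip{\nu}{\nu}=-1$ to see that $\pd{}{t}\nu$ is tangential. Its components come from differentiating $\ip{\nu}{X_i}=0$:
\[
\ip{\pd{}{t}\nu}{X_i}=-\ip{\nu}{\partial_i X_t}=S_i\ip{\nu}{\nu}=-S_i .
\]
This gives $\pd{}{t}\nu=-g^{ij}S_jX_i=-\nabla S$; here the Lorentzian sign is exactly what produces the minus sign.

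\emph{Step (3).} I differentiate $h_{ij}=-\ip{X_{ij}}{\nu}$ to get
\[
\pd{}{t}h_{ij}=\ip{\partial_i\partial_j(S\nu)}{\nu}+\ip{X_{ij}}{\nabla S}.
\]
Expanding $\partial_i\partial_j(S\nu)=S_{ij}\nu+S_i\nu_j+S_j\nu_i+S\nu_{ij}$, the middle terms are tangential and contribute nothing. The first term contributes $-S_{ij}$. For the last term, differentiating $\ip{\nu_i}{\nu}=0$ gives $\ip{\nu_{ij}}{\nu}=-\ip{\nu_i}{\nu_j}=-A_i^pA_j^qg_{pq}$. Finally, $\ip{X_{ij}}{\nabla S}=\Gamma_{ij}^kS_k$ by \eqref{eq:GW-formula}. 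Combining $-S_{ij}+\Gamma_{ij}^kS_k=-\nabla_i\nabla_jS$ yields (3).

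\emph{Step (4).} This is the only place needing care. Since $F$ is a function of the Weingarten map $h_i^j=g^{jk}h_{ik}$, its time derivative picks up a metric term:
\[
\pd{}{t}h_i^{\,j}=g^{jk}\pd{}{t}h_{ik}-g^{jl}\Big(\pd{}{t}g_{lm}\Big)g^{mk}h_{ik}.
\]
By (1) the metric term equals $+2S(h^2)_i^{\,j}$. Working in an orthonormal frame, (3) then gives
\[
\pd{}{t}F=-\dot F^{ij}\nabla_i\nabla_jS+S\,\dot F(h^2).
\]
Finally I substitute $S=F^{-1}$, using
\[
\nabla_i\nabla_jS=-F^{-2}\nabla_i\nabla_jF+2F^{-3}\nabla_iF\nabla_jF .
\]
This produces exactly the stated formula.

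The main obstacle is not conceptual but the bookkeeping of signs. One must keep track of the Lorentzian normalisation $\ip{\nu}{\nu}=-1$ in (2) and (3), and must not forget the metric contribution $2S\,\dot F(h^2)$ in (4). This contribution combines with $-S\,\dot F(h^2)$ from (3) to give the net $+F^{-1}\dot F(h^2)$. I would double-check the signs on the baby model $u=\sqrt{e^{-2t}+|x|^2}$ from the introduction, where all quantities are explicit.
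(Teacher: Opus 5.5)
Your proposal is correct and follows essentially the same route as the paper. Items (1)--(3) come from direct differentiation using the Gauss--Weingarten formulas and $\ip{\nu}{\nu}=-1$. For (4), you differentiate the Weingarten map $h_i^{\,j}$ to pick up the metric term, whereas the paper uses the canonical spacetime connection $\nabla_t$ with $\nabla_t g=0$; both yield $-\dot F^{ij}\nabla_i\nabla_j S+S\dot F(h^2)$ before substituting $S=F^{-1}$.
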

\begin{proof}
We follow the calculations in \cite{co-compact}. Using Gauss-Weingarten formula \eqref{eq:GW-formula}, the evolution of the induced metric \(g\) is given by:
\begin{equation}
    \begin{aligned}
        \pd{}{t}g_{ij} &= \pd{}{t}\ip{\pd{X}{x^i}}{\pd{X}{x^j}} \\
        &= \ip{\pd{}{x^i}(-S\nu)}{\pd{X}{x^j}} + \ip{\pd{X}{x^i}}{\pd{}{x^j}(-S\nu)} \\
        &= -SA_i^k g_{kj} - S g_{ik}A_j^k \\
        &= -2Sh_{ij}.
    \end{aligned}
\end{equation}
The evolution for the unit future directed normal \(\nu\) is given by (notice that \(\nu \cdot \nu = -1\)):  
\begin{equation*}
    \begin{aligned}
        \ip{\pd{\nu}{t}}{\pd{X}{x^i}} &= \pd{}{t}\ip{\nu}{\pd{X}{x^i}} - \ip{\nu}{\pd{}{x^i}(-S\nu)} \\
        &= \ip{\nu}{\pd{}{x^i}(S\nu)} \\
        &= \ip{\nu}{\pd{S}{x^i}\nu + SA_i^k \pd{X}{x^k}} \\
        &= -\pd{S}{x^i},
    \end{aligned}
\end{equation*}
which leads to 
\begin{equation}
    \pd{\nu}{t} = -\pd{S}{x^i} g^{ij} \pd{X}{x^j} = -\nabla S.
\end{equation}
The evolution for the second fundamental form \(h\) is
\begin{equation}
    \begin{aligned}
        \pd{}{t}h_{ij} &= \pd{}{t}\ip{\pd{\nu}{x^i}}{\pd{X}{x^j}} \\
        &= \ip{\pd{}{x^i}\left(\pd{\nu}{t}\right)}{\pd{X}{x^j}} + \ip{\pd{\nu}{x^i}}{\pd{}{x^j}\pd{X}{t}} \\
        &= \ip{\pd{}{x^i}\left(-\pd{S}{x^k}g^{kl}\pd{X}{x^l} \right)}{\pd{X}{x^j}} + \ip{\pd{\nu}{x^i}}{\pd{}{x^j}(-S\nu)}\\
        &= -\left(\frac{\partial^2 S}{\partial x^i \partial x^j} - \Gamma_{ij}^k \pd{S}{x^k} \right) - \ip{\pd{\nu}{x^i}}{S \pd{\nu}{x^j}} \\
        &= -\nabla_i\nabla_j S - SA_i^p A_j^q g_{pq}.
    \end{aligned}
\end{equation}

Introduce the canonical spacetime connection  (see \cite{Andrews-Ben-The-Ricci-Flow}, Section 6.3)
\begin{equation}\label{eq:canonical-spacetime-connection}
    \nabla_t \partial_i:= \sum_{p=1}^{n}\left(\frac{1}{2}g^{pj}\partial_t g_{ij}\right)\partial_p = -S\sum_{p=1}^{n}A_i^p \partial_p,
\end{equation}
where \(\partial_i := \pd{X}{x^i}\) (\(i=1,\cdots,n\)). Then \(\nabla_t g_{ij} = 0\), and we have:
\begin{equation}\label{eq:original-evolution of h}
    \nabla_t h_{ij} = \pd{}{t}h_{ij} - h(\nabla_t \partial_i,\partial_j) - h(\partial_i, \nabla_t \partial_j) = -\nabla_i\nabla_j S + SA_j^p h_{ip}.
\end{equation}
Since $\nabla_t g_{ij} = 0$, we have 
\begin{equation}\label{eq:original-evolution of S}
    \pd{}{t}S(h,g) = \dot{S}^{ij}\nabla_t h_{ij} = -\dot{S}^{ij}\nabla_i\nabla_j S + S\dot{S}(h^2),
\end{equation}
where \(\dot{S}(h^2) = \dot{S}^{ij}A_j^p h_{pi}\).
From Gauss-Codazzi equations and Ricci's identity,
\begin{equation}
    \begin{aligned}
        \nabla_i \nabla_j h_{kl} &= \nabla_i \nabla_k h_{jl} \\
        &= \nabla_k \nabla_i h_{jl} + R_{ikj}^{\quad p} h_{pl} + R_{ikl}^{\quad p} h_{jp} \\
        &= \nabla_k \nabla_l h_{ij} + h_{jk}h_i^p h_{pl} - h_{ij}h_k^p h_{pl} + h_{kl}h_i^p h_{jp} - h_{il}h_k^p h_{jp}.
    \end{aligned}
\end{equation}
Plug it into \eqref{eq:original-evolution of h}, then we get:
\begin{equation}
    \begin{aligned}\label{eq:eoh}
        \nabla_t h_{ij}&= -\nabla_i(\dot{S}^{kl}\nabla_j h_{kl}) + SA_j^p h_{ip} \\
        &= -\dot{S}^{kl}\nabla_i\nabla_j h_{kl} - \ddot{S}^{kl,mn}\nabla_i h_{mn}\nabla_j h_{kl} + SA_j^p h_{ip} \\
        &= -\dot{S}^{kl}(\nabla_k \nabla_l h_{ij} - h_{ij}h_k^p h_{pl} + h_{kl}h_i^p h_{jp}) - \ddot{S}(\nabla_i h, \nabla_j h) + SA_j^p h_{ip} \\
        &= -\dot{S}^{kl}\nabla_k \nabla_l h_{ij} - \ddot{S}(\nabla_i h, \nabla_j h) + (S - \dot{S}(h))h_i^p h_{jp} + h_{ij}\dot{S}(h^2),
    \end{aligned}
\end{equation}
where \(\dot{S}(h) = \dot{S}^{kl}h_{kl},\dot{S}(h^2) = \dot{S}^{ij}h_j^p h_{pi},\) and \(\ddot{S}(\nabla_i h, \nabla_j h) = \ddot{S}^{kl,mn}\nabla_i h_{mn}\nabla_j h_{kl}\). Since \(S = F^{-1}\), 
\[\dot{S}^{kl} = -F^{-2}\dot{F}^{kl}, \quad \ddot{S}^{kl,mn} = 2F^{-3}\dot{F}^{mn}\dot{F}^{kl}-F^{-2}\ddot{F}^{kl,mn}, \]
then \eqref{eq:eoh} can be expressed
\begin{equation}\label{eq:nabla-t-h}
    \begin{aligned}
        \nabla_t h_{ij} = &F^{-2}\dot{F}^{kl}\nabla_k\nabla_l h_{ij}+F^{-2}\ddot{F}(\nabla_i h, \nabla_j h)-2F^{-3}\nabla_i F \nabla_j F\\
        &+ 2F^{-1}h_i^p h_{jp} - F^{-2}h_{ij}\dot{F}(h^2),
    \end{aligned}
\end{equation}
where we have used Euler's identity \(\sum_{k,l=1}^{n}\dot{F}^{kl}h_{kl} = F\). 
\par From \eqref{eq:nabla-t-h} we deduce 
\begin{equation}\label{eq:evolution of h}
    \pd{}{t} h_{ij} = F^{-2}\dot{F}^{kl}\nabla_k\nabla_l h_{ij}+F^{-2}\ddot{F}(\nabla_i h, \nabla_j h)-2F^{-3}\nabla_i F \nabla_j F - F^{-2}h_{ij}\dot{F}(h^2).
\end{equation}

\end{proof}

\section{Gauss Map Parameterization}\label{sec:gauss-map}

For a strictly convex spacelike hypersurface \(M \), define its support function \(u:\H^n \to \R\) by \[ u(z) = \inf\{-\ip{z}{X}: X \in M\}. \]  
If \(M\) is co-compact, strictly convex, \(u(z)\) is bounded on \( \H^n\), and is written as:
\[ u(z) = -\ip{z}{X(\nu^{-1}(z))},  \]
where we regard \(z\in \H^n\) as the position vector of the future branch of hyperboloid. \(X(\nu^{-1}(z))\) means the position vector of \(M\) whose unit future directed normal is exactly \(z\). 
\begin{remark}
  For a co-compact hypersurface, by a translation along the \(x_{n+1}\)-direction, we can assume that the support function \(u\) is positive for all \(z \in \H^n\). 
\end{remark}

\par Under the flow \eqref{eq:ICF}, we will derive the evolution equation for \(u = u(z,t)\). Differentiating $u$ with respect to \(t\), we have: 
\begin{equation}\label{eq:pd{u}{t}}
    \pd{u}{t}(z,t) = \ip{-\pd{X}{t}\big|_{\nu^{-1}(z)} - X_{*}\left( \pd{}{t}\nu^{-1}(z) \right)}{z} = -S\big|_{\nu^{-1}(z)}.
\end{equation}
Here $X_*$ is the push-forward of $X$.

If we use the image of Gauss map to reparameterize the hypersurface \(M\), then the position vector of \(M\) can be recovered by its support function \cite{co-compact}:
\begin{equation}
    \bar{X} = uz - z_{*}(\bnabla u) \in \R^{n,1},
\end{equation}
where $\bnabla$ denotes the Levi-Civita connection on \(\H^n\) with respect to the standard hyperbolic metric \(\bg\) (the induced metric of hyperboloid from \(\R^{n,1}\)), and $z_*$ is the push-forward of the map $z:\H^n\rightarrow \R^{n,1}$.  
Moreover, the eigenvalues \( \lambda=(\lambda_1,\dots,\lambda_n)\) of the following tensor
\begin{equation}
    \tau_{ij} := u \bg_{ij} - \bnabla_i \bnabla_j u
\end{equation}
with respect  to \(\bg\) are exactly the principle radii of \(M\) (the reciprocal  of \(\kappa_i\)), and it satisfies the Codazzi-type equality \(\bnabla_i \tau_{jk} = \bnabla_{j}\tau_{ik}\) following from Ricci's identity.

\par Since we assume that $M$ is strictly convex, then  \(\tau_{ij}\) is positive definite. Therefore, we can define the dual function \(F_{*}\) of \(F\): 
\[F_{*}(\tau,\bar{g}) := F(\bar{g},\tau)^{-1}=\left(\frac{s_n}{s_{n-k}}\right)^{1/k}(\lambda).\]

Since \(S(z,t) = F(h,g)^{-1} = F_{*}(\tau, \bg)\) by using the Gauss map parameterization, the evolution equation of \(u\) can be written as: 
\begin{equation}\label{eq:evolution of support function}
    \pd{u}{t}(z,t) = -F_{*}(\tau)(z,t),
\end{equation}
where we omitted \(\bg\) in \(F_*(\tau, \bg)\) since \(\bg\) does not change under the flow, and we may also omit \(\tau\) in the following context. 
\par Denote \(\dot{F}_*^{kl} = \pd{F_*}{\tau_{kl}}\). The evolution equation \eqref{eq:evolution of support function} can be rewritten as
\begin{equation}\label{eq:expanded form of evolution of support function}
    \begin{aligned}
        \pd{}{t}u &=-F_* = -\dot{F}_*^{kl}\tau_{kl}\text{ (since \(\operatorname{degree}F_* = 1\)) } \\
        &= \dot{F}_*^{kl}\bnabla_k \bnabla_l u - u\dot{F}_*^{kl}\bg_{kl}\text{ (by the definition of \(\tau_{kl}\))}.
    \end{aligned}
\end{equation}
\begin{remark}\label{rem:induced functions}
    Under the co-compactness assumption, the maximum and minimum of \(\kappa_i\) can be attained. In addition, \(F_*\) and \(u\) have positive upper and lower bounds at every moment. 
\end{remark}
\begin{proposition}\label{prop:evolution of speed}
    The evolution for $F_* = F_*(\tau)$ under ICF \eqref{eq:ICF} is given by: 
    \begin{enumerate}[label=(\arabic*)]
        \item $\displaystyle \pd{}{t}F_{*} = \dot{F}_*^{kl}\bnabla_k \bnabla_l F_* - F_* \dot{F}_*^{kl}{\bg}_{kl}$,
        \item $\displaystyle \pd{}{t}F_{*}^{-1} = F_*^{-1} \dot{F}_*^{kl}\bg_{kl} - 2F_* \dot{F}_*^{kl}\bnabla_k F_*^{-1}\bnabla_l F_*^{-1} + \dot{F}_*^{kl}\bnabla_k \bnabla_l F_*^{-1} $.
    \end{enumerate}
    where \(\dot{F}_*^{kl}:= \pd{F_*}{\tau_{kl}}\) is the partial derivative of \(F_*\) (as a function with the matrix components \(\tau_{ij}\) as its variables).  
\end{proposition}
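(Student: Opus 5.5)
Both formulas follow by differentiating the support-function equation \eqref{eq:evolution of support function} in time and commuting derivatives on the fixed background \((\H^n,\bg)\).

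\textbf{Step 1: evolution of \(\tau_{kl}\).} The metric \(\bg\) and its connection \(\bnabla\) do not depend on \(t\), so \(\partial_t\) commutes with \(\bnabla_k\bnabla_l\). Hence
\[ \pd{}{t}\tau_{kl} = \bg_{kl}\,\pd{u}{t} - \bnabla_k\bnabla_l \pd{u}{t} = -F_*\bg_{kl} + \bnabla_k\bnabla_l F_*. \]

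\textbf{Step 2: formula (1).} Since \(\bg\) is fixed in time, the chain rule gives
\[ \pd{}{t}F_* = \dot{F}_*^{kl}\pd{}{t}\tau_{kl} = \dot{F}_*^{kl}\bnabla_k\bnabla_l F_* - F_*\dot{F}_*^{kl}\bg_{kl}, \]
which is (1). This works without any Codazzi-type commutation, because only \(\partial_t\tau\) enters, and it is already expressed through \(F_*\).

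\textbf{Step 3: formula (2).} Set \(w=F_*^{-1}\). Then \(\partial_t w = -F_*^{-2}\partial_t F_*\). Next, \(\bnabla_l w = -F_*^{-2}\bnabla_l F_*\), and differentiating once more,
\[ \bnabla_k\bnabla_l w = -F_*^{-2}\bnabla_k\bnabla_l F_* + 2F_*^{-3}\bnabla_k F_*\bnabla_l F_*. \]
Solving for the Hessian of \(F_*\) and using \(\bnabla F_* = -F_*^2\bnabla w\), we get
\[ -F_*^{-2}\bnabla_k\bnabla_l F_* = \bnabla_k\bnabla_l w - 2F_*^{-3}\bnabla_k F_*\bnabla_l F_* = \bnabla_k\bnabla_l w - 2F_*\bnabla_k w\bnabla_l w. \]
Multiply (1) by \(-F_*^{-2}\) and substitute this identity. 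The zeroth-order term becomes \(F_*^{-1}\dot{F}_*^{kl}\bg_{kl}\), which gives exactly (2).

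The only point needing care is the sign and power bookkeeping in the gradient term of Step 3. It yields the coefficient \(-2F_*\), consistent with the statement.
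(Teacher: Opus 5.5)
Your proposal is correct and follows the paper's proof. Both differentiate $\tau_{kl}=u\bg_{kl}-\bnabla_k\bnabla_l u$ in time on the fixed background $(\H^n,\bg)$ using $\partial_t u=-F_*$, apply the chain rule to get (1), and then derive (2) from (1). Your explicit computation in Step 3, which produces the $-2F_*$ gradient coefficient, simply fills in the step the paper leaves as ``easily obtained.''
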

\begin{proof}
    From \eqref{eq:expanded form of evolution of support function} we get 
    \[ \begin{aligned}
        \pd{}{t}F_* &= \dot{F}_*^{kl}\pd{}{t}\tau_{kl} \\
        &= \dot{F}_*^{kl}\pd{}{t}\left( u\bg_{kl} - \bnabla_k\bnabla_l u \right) \\
        &= \dot{F}_*^{kl}\left(\pd{u}{t}\bg_{kl} - \bnabla_k \bnabla_l \pd{u}{t}\right) \\
        &= \dot{F}_*^{kl}\bnabla_k \bnabla_l F_* - F_* \dot{F}_*^{kl}\bg_{kl}.
    \end{aligned}  \]
    The second equation can be easily obtained by the first equation.
\end{proof}

\section{A Priori Estimates for the Unnormalized Flow}\label{sec:unnormalized-flow}

\subsection{Short-Time Existence}
Since the equation \eqref{eq:ICF} is invariant for \(G\) action, the flow equation actually deforms the quotient manifold \(M/G\). Thus we have:  

\begin{proposition}
    The ICF \eqref{eq:ICF} admits a short-time solution \(M_t, t\in [0,T)\) which is co-compact.  
\end{proposition}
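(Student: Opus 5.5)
The plan is to reduce the problem to a scalar parabolic equation on a compact manifold and apply standard short-time existence theory. Since $M_0$ is strictly convex and standard co-compact, its Gauss map $\nu:M_0\to\H^n$ is a diffeomorphism. It is equivariant under $G\subset \mathrm{O}_+(\R^{n,1})$, because $T\in G$ maps $M_0$ to itself and preserves future directed unit normals. Hence the support function $u_0(z)=-\ip{z}{X_0(\nu^{-1}(z))}$ satisfies $u_0(Tz)=u_0(z)$ for $T\in G$, so $u_0$ descends to a smooth function on the compact quotient $\H^n/G$. The hyperbolic metric $\bg$ is $G$-invariant, and therefore so are $\tau_{ij}=u\bg_{ij}-\bnabla_i\bnabla_j u$ and $F_*(\tau)$. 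Consequently \eqref{eq:evolution of support function} is a well-defined fully nonlinear equation for $u$ on the closed manifold $\H^n/G$.

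Next, I would check parabolicity at $t=0$. Strict convexity means $\tau[u_0]>0$. The dual function $F_*=(s_n/s_{n-k})^{1/k}$ is smooth, homogeneous of degree one, and strictly increasing on the positive cone. Thus $\dot F_*^{kl}$ is positive definite at $\tau[u_0]$. The linearization of $u\mapsto -F_*(\tau[u])$ is $\dot F_*^{kl}\bnabla_k\bnabla_l v - \dot F_*^{kl}\bg_{kl}v$, which is uniformly elliptic on the compact quotient. Standard theory then gives a unique smooth solution $u(\cdot,t)$ on $\H^n/G\times[0,T)$ for some $T>0$, with $\tau[u]>0$ preserved for short time by continuity. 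This can be done by linearization plus the inverse function theorem in parabolic H\"older spaces, or by invoking Krylov--Safonov/Evans--Krylov for concave operators. Lifting to $\H^n$ yields a $G$-invariant solution.

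Finally, I would recover the flow. Set $\bar X=uz-z_*(\bnabla u)$. For each $t$ this is a strictly convex spacelike hypersurface with Gauss map $z$ and principal radii given by the eigenvalues of $\tau$, and it is $G$-equivariant: $\bar X(Tz)=T\bar X(z)$. So $M_t$ is standard co-compact with the same group $G$. I would then reparametrize by composing with time-dependent tangential diffeomorphisms, obtained by solving an ODE on the compact quotient, so that the normal velocity becomes exactly $-S\nu$. This is legitimate because \eqref{eq:pd{u}{t}} shows that the normal speed is determined by $\partial_t u=-F_*=-S$. The tangential correction commutes with $G$ by uniqueness of ODE solutions, so the resulting $X(p,t)$ solves \eqref{eq:ICF}.

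The main obstacle is this last step: verifying that a solution of the support-function equation gives a genuine solution of \eqref{eq:ICF} in the original parametrization, and doing so $G$-equivariantly. Alternatively, one could write $M_t$ as a normal graph over $M_0$ and work directly on the compact $M_0/G$. Either route relies on compactness of the quotient to apply standard parabolic theory; the gauge and equivariance bookkeeping is the delicate part.
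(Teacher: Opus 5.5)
Your proposal is correct and follows the paper's idea: the paper only notes that the flow is $G$-invariant, so it deforms the compact quotient where standard parabolic short-time existence applies, and defers the details to Section 5 of \cite{co-compact}. Your support-function equation on the compact $\H^n/G$, followed by the $G$-equivariant tangential reparametrization to recover \eqref{eq:ICF} with normal speed exactly $-S$, is a valid way of supplying those omitted details.
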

For details, see section 5 of \cite{co-compact} for reference.

\subsection{Estimates on Support Function}
Let \(u(z,t)\) be the support function of a standard co-compact, strictly convex spacelike hypersurface \(X:M\times [0,T) \to\R^{n,1}\). Then \(u\) has positive lower and upper bounds at initial time \(t = 0\). Therefore, we obtain the following estimate.
\begin{proposition}\label{prop:support-estimate}
    Let \(X_0:M\to \R^{n,1}\) be standard co-compact, then the support function \(u(z,t)\) satisfies:
    \begin{equation}\label{eq:std supp func est}
        e^{-t}\inf_{z\in \H^n}u(z,0) \leq u(z,t) \leq e^{-t}\sup_{z\in \H^n}u(z,0), \quad \forall (z,t)\in \H^n \times [0,T).
    \end{equation}
\end{proposition}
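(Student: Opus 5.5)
We compare \(u\) with the explicit solutions \(\underline{u}(z,t)=e^{-t}\inf_{\H^n}u(\cdot,0)\) and \(\overline{u}(z,t)=e^{-t}\sup_{\H^n}u(\cdot,0)\), and conclude by the parabolic maximum principle on the compact quotient \(\H^n/G\).

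\textbf{Step 1: the comparison functions are solutions.} For a constant support function \(c\), we have \(\tau_{ij}=c\,\bg_{ij}\). By homogeneity and the normalization \(f(1,\dots,1)=1\) we get \(F_*(c\bg)=c\). Hence \(c(t)=c_0e^{-t}\) solves \eqref{eq:evolution of support function}, since \(c'=-c\). Geometrically this is the hyperboloid \(\H^n(c_0e^{-t})\), i.e.\ the baby model of the introduction.

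\textbf{Step 2: the difference satisfies a linear parabolic equation.} The group \(G\) acts on \(\H^n\) by isometries, and the support function is \(G\)-invariant because \(M_t\) is \(G\)-invariant. So \(u(\cdot,t)\) descends to the compact manifold \(\H^n/G\), and by Remark \ref{rem:induced functions} extrema in \(z\) are attained. Let \(w=u-\overline{u}\). Then
\[
\partial_t w=-\big(F_*(\tau[u])-F_*(\tau[\overline{u}])\big)=-a^{kl}\big(w\bg_{kl}-\bnabla_k\bnabla_l w\big),
\]
where \(a^{kl}=\int_0^1\dot F_*^{kl}\big(s\tau[u]+(1-s)\tau[\overline{u}]\big)\,ds\). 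Convexity of the segment holds: both \(\tau[u]\) and \(\tau[\overline u]\) are positive definite, so the whole segment lies in the region where \(F_*\) is defined and elliptic, and \(a^{kl}\) is positive definite.

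\textbf{Step 3: maximum principle.} The equation reads \(\partial_t w=a^{kl}\bnabla_k\bnabla_l w-(\operatorname{tr}_{\bg}a)\,w\), which has a nonpositive zeroth-order coefficient. Hence \(\max_{\H^n/G}w\) cannot become positive if it starts at \(w(\cdot,0)\le0\). If one prefers to avoid this, multiply by \(e^{t}\) or use the standard first-positive-maximum argument. At such a point \(\bnabla^2 w\le0\) and \(w>0\), so \(\partial_t w<0\), a contradiction. The lower bound follows identically with \(\underline{u}\).

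\textbf{Main obstacle.} The only delicate point is ensuring ellipticity along the interpolation segment (Step 2) together with the compactness needed for the maximum principle. Both are handled by strict convexity of the solution on \([0,T)\), positive definiteness of the hyperbolic comparison \(\tau\), and the passage to the compact quotient \(\H^n/G\).
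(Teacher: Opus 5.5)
Your proposal is correct and follows the paper's proof essentially step for step. Both compare \(u\) with the hyperboloid solutions \(e^{-t}\sup_{\H^n}u(\cdot,0)\) and \(e^{-t}\inf_{\H^n}u(\cdot,0)\), write the difference as \(\partial_t w = a^{kl}(\bnabla_k\bnabla_l w - \bg_{kl}w)\) with \(a^{kl}\) the average of \(\dot F_*^{kl}\) along the segment, and apply the maximum principle on the compact quotient \(\H^n/G\). Your extra details (the \(G\)-invariance of \(u\), \(\operatorname{tr}_{\bg}a>0\)) are fine; the ``first positive maximum'' step should be run at the first time \(\max w\) reaches a fixed level \(\epsilon>0\), where \(\partial_t w\ge 0\) contradicts \(\partial_t w = a^{kl}\bnabla_k\bnabla_l w-(\operatorname{tr}_{\bg}a)\,\epsilon<0\).
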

\begin{proof}
    Let \(w(t) = u_+ e^{-t}\), where \(u_+ = \sup_{z\in \H^n} u(z,0)\) is a finite positive constant. Then \(\frac{\dd }{\dd t}w(t) = -w(t)\). 
    \[ \begin{aligned}
        \pd{}{t}(u - w) &= -F_*(\tau) + w \\
        &= -F_*(\tau) + F_*(w\bg) \\
        &= \int_0^1 \frac{\dd}{\dd s}F_*((1-s)\tau + sw\bg)\dd s \\
        &= a^{kl}(-\tau_{kl} + w\bg_{kl}) \\
        &= a^{kl}\left(\bnabla_k \bnabla_l(u - w)-\bg_{kl}(u - w) \right),
    \end{aligned}
    \]
    where \(a^{kl} = \int_0^1 F_*^{kl}\big|_{(1-s)\tau+ sw\bg} \dd s\) is symmetric and positive-definite. Since at the initial time \(u(z,0)-w(0)\leq 0\), by the maximum principle of parabolic equations, 
    \[ u(z,t) - w(t) \leq 0, \forall z\in \H^n, \forall t\in[0,T).\] 
    That is, \(u(z,t)\leq u_+ e^{-t}\). Similarly, we can prove the lower bound of $u$.
    \end{proof}

\subsection{Estimates on Speed Function}
We establish the \(C^0\)-estimate on speed function \(F_*\).
\begin{proposition}\label{prop:speed-estimate}
    Suppose that the initial hypersurface \(X_0:M\to \R^{n,1}\) is standard co-compact, then \(F_*(z,t)\) satisfies: 
    \begin{equation}
        \inf_{z\in \H^n}u F_*^{-1}(z,0) \leq u F_*^{-1}(z,t) \leq \sup_{z\in \H^n}u F_*^{-1}(z,0).
    \end{equation}
   Therefore, we have two positive constants \(C_-, C_+ \), such that  
    \begin{equation}\label{eq:std speed func est}
        C_- e^{-t} \leq F_*(\tau)(z,t) \leq C_+ e^{-t}.
    \end{equation}
\end{proposition}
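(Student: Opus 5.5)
We apply the maximum principle to the quotient \(\Phi := uF_*^{-1}\) on \(\H^n\times[0,T)\). Both \(u\) and \(F_*\) are \(G\)-invariant and have positive bounds at each time (Remark \ref{rem:induced functions}). So \(\Phi\) descends to the compact quotient \(\H^n/G\), and spatial extrema are attained. The goal is to show that \(\Phi\) satisfies a linear parabolic equation with no zeroth-order term. Then \(\sup\Phi\) is nonincreasing and \(\inf\Phi\) is nondecreasing in \(t\), which is the first claim.

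First, write \(L := \dot{F}_*^{kl}\bnabla_k\bnabla_l\) and \(\mathrm{tr} := \dot{F}_*^{kl}\bg_{kl}\). From \eqref{eq:expanded form of evolution of support function} we have \(\partial_t u = Lu - u\,\mathrm{tr}\). From Proposition \ref{prop:evolution of speed}(1) we have \(\partial_t F_* = LF_* - F_*\,\mathrm{tr}\). So \(u\) and \(F_*\) solve the \emph{same} linear equation \(\partial_t w = Lw - \mathrm{tr}\, w\).

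For a quotient of two solutions of the same such equation, the zeroth-order terms cancel. Set \(\Phi = u/F_*\). Then
\[
\partial_t \Phi = \frac{\partial_t u}{F_*} - \frac{u\,\partial_t F_*}{F_*^2} = \frac{Lu}{F_*} - \frac{u\,LF_*}{F_*^2}.
\]
Also
\[
L\Phi = \frac{Lu}{F_*} - \frac{u LF_*}{F_*^2} - \frac{2}{F_*}\dot{F}_*^{kl}\bnabla_k F_*\bnabla_l\Phi,
\]
which uses \(\bnabla u = F_*\bnabla\Phi + \Phi\bnabla F_*\). Hence
\[
\partial_t\Phi = L\Phi + \frac{2}{F_*}\dot{F}_*^{kl}\bnabla_k F_*\,\bnabla_l\Phi .
\]
Alternatively, one can combine the \(u\)-equation with Proposition \ref{prop:evolution of speed}(2) for \(F_*^{-1}\); the cross term \(-2F_*\dot F_*^{kl}\bnabla_k F_*^{-1}\bnabla_l F_*^{-1}\) gets absorbed in the same way.

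Since \(\dot{F}_*^{kl}\) is positive definite (strict convexity keeps \(\tau>0\), and \(F_*\) is increasing in \(\tau\)), the maximum principle on the compact quotient applies. At a spatial maximum of \(\Phi\) we have \(\bnabla\Phi=0\) and \(L\Phi\le 0\), so \(\sup\Phi\) is nonincreasing; similarly \(\inf\Phi\) is nondecreasing. This gives the two-sided bound on \(uF_*^{-1}\).

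Finally, Proposition \ref{prop:support-estimate} gives \(e^{-t}\inf u(\cdot,0)\le u\le e^{-t}\sup u(\cdot,0)\). Combining, \(F_* = u/\Phi\) lies between \(e^{-t}\inf u(\cdot,0)/\sup\Phi(\cdot,0)\) and \(e^{-t}\sup u(\cdot,0)/\inf\Phi(\cdot,0)\). Each of these constants is positive and finite by co-compactness at \(t=0\), which yields \eqref{eq:std speed func est}.

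The only delicate point is the bookkeeping in the quotient computation, namely checking that the gradient terms assemble into a pure transport term in \(\bnabla\Phi\). It is routine once one notices that \(u\) and \(F_*\) satisfy identical linear equations.
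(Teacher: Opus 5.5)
Your proposal is correct and matches the paper's proof: the paper applies the maximum principle to the same quantity $w = uF_*^{-1}$ and derives $\partial_t w = \dot{F}_*^{kl}\bnabla_k\bnabla_l w - 2F_*\dot{F}_*^{kl}\bnabla_k F_*^{-1}\bnabla_l w$. This is your transport equation, since $-2F_*\bnabla_k F_*^{-1} = 2F_*^{-1}\bnabla_k F_*$; the paper then combines the resulting bound with Proposition \ref{prop:support-estimate} exactly as you do.
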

\begin{proof}
    Let \(w(z,t) = uF_*^{-1}\), by \eqref{eq:expanded form of evolution of support function} and Proposition ~\ref{prop:evolution of speed}, then we have 
    \begin{equation*}
        \begin{aligned}
            \pd{}{t}w &= \pd{u}{t}F_*^{-1} + u\pd{}{t}F_*^{-1} \\
            &= \dot{F}_*^{kl}\bnabla_k \bnabla_l w - 2F_* \dot{F}_*^{kl}\bnabla_k F_*^{-1}\bnabla_l w.
        \end{aligned}
    \end{equation*}
    Since \(\dot{F}_*^{kl}\) is positive definite, by maximum principle, we have \(\inf_{z\in \H^n}w(z,0) \leq w(z,t)\leq \sup_{z\in \H^n}w(z,0)\). By co-compactness, \(w(z,0)\) has positive lower and upper bounds. Thus \(C_1\leq w(z,t) \leq C_2\). Then by \eqref{eq:std supp func est}, we get \eqref{eq:std speed func est}.
\end{proof}

\subsection{Gradient Estimate}
We establish an upper bound for \(\abs{\bnabla u}\).
\begin{proposition}\label{prop:gradient-estimate}
    Suppose that \(X:M\to \R^{n,1}\) is spacelike, strictly convex, and standard co-compact, then its support function \(u(z)\) satisfies:
    \begin{equation}\label{eq:gradient-estimate}
        \abs{\bnabla u}^2 \leq u^2.
    \end{equation}
\end{proposition}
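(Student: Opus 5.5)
The bound is essentially pointwise, coming from the fact that the recovered position vector $\bar X = uz - z_*(\bnabla u)$ must lie in the region where the support function is positive. It does not require the flow. I would argue as follows.

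First, compute the Minkowski norm of the position vector. Since $z\in\H^n$ gives $\ip{z}{z}=-1$, and $z_*(\bnabla u)$ is tangent to $\H^n$ at $z$ (so orthogonal to $z$ and spacelike with norm $\abs{\bnabla u}^2$ in the hyperbolic metric), we get
\[
\ip{\bar X}{\bar X} = -u^2 + \abs{\bnabla u}^2 .
\]
The claim $\abs{\bnabla u}^2\le u^2$ is therefore equivalent to $\ip{X}{X}\le 0$ at every point of $M$. This says that $M$ lies in the closed region bounded by the light cone $\{\ip{X}{X}\le 0\}$, and in fact in its future half, because $u>0$ after the normalization in the Remark.

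Second, prove that every point $X\in M$ satisfies $\ip{X}{X}\le 0$. Suppose some $X_0\in M$ is spacelike, $\ip{X_0}{X_0}>0$. Then there are future timelike unit vectors $z\in\H^n$ with $\ip{z}{X_0}>0$. We can even make $-\ip{z}{X_0}\to-\infty$ by letting $z$ go to infinity in $\H^n$ in a direction tilted toward $X_0$: writing $z=\cosh s\,e_0+\sinh s\,e$ with $e$ spacelike and $\ip{e}{X_0}>0$, the quantity $\ip{z}{X_0}$ grows like $e^{s}$.

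By the definition $u(z)=\inf_{X\in M}(-\ip{z}{X})\le -\ip{z}{X_0}$, this forces $u(z)\to-\infty$ along such a sequence. This contradicts the boundedness of $u$ on $\H^n$, which follows from co-compactness: $u$ is $G$-invariant, since $G\subset \mathrm{O}_+$ preserves $\ip{\cdot}{\cdot}$ and maps $M$ to itself, so $u$ descends to the compact quotient $\H^n/G$. It also contradicts $u>0$.

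Hence $\ip{X}{X}\le 0$ everywhere, and combining this with the identity from the first step gives $\abs{\bnabla u}^2\le u^2$.

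The main point needing care is that the infimum defining $u$ really is attained as $u(z)=-\ip{z}{X(\nu^{-1}(z))}$. This is what makes $\bar X$ a point of $M$, and it uses strict convexity together with co-compactness, both of which the paper already asserts. The remaining work is the explicit choice of $z$ with $\ip{z}{X_0}\to+\infty$, which is elementary once a Lorentz frame with $X_0$ spacelike is fixed. If one wants the strict form of the inequality, the same argument applied with a small neighborhood of $X_0$, together with $u>0$ attained at its positive minimum, upgrades it.
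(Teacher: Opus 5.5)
Your proof is correct, but it takes a different route from the paper's.

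The paper uses a maximum-principle argument. The function $\phi=\ip{X}{X}=\abs{\bnabla u}^2-u^2$ is $G$-invariant, so it attains its maximum at some $y_0$. There the first-order condition $\ip{X_i}{X}=0$, together with the $X_i$ spanning $T_pM$, forces $X$ to be normal. Hence $\bnabla u(y_0)=0$ and $\max\phi=-u^2(y_0)\le 0$.

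You avoid any maximum principle. You use only the infimum definition, $u(z)\le-\ip{z}{X_0}$ for every $X_0\in M$. A spacelike $X_0$ would make this bound tend to $-\infty$ along $z=\cosh s\,e_0+\sinh s\,e$. Concretely, take a future unit timelike $e_0\perp X_0$ and $e=X_0/\ip{X_0}{X_0}^{1/2}$, so that $\ip{z}{X_0}=\ip{X_0}{X_0}^{1/2}\sinh s$. This contradicts the lower bound on $u$.

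What your route buys:
\begin{itemize}
\item It is softer and more geometric, and needs only $\inf u>-\infty$.
\item The same computation also rules out past-directed timelike and null points. So you in fact get that $M$ lies in the closed future causal cone, not merely in $\{\ip{X}{X}\le 0\}$.
\end{itemize}

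What the paper's route buys:
\begin{itemize}
\item It needs only that $\max\phi$ is attained, which follows directly from compactness of $M/G$.
\item It gives the quantitative bound $\abs{\bnabla u}^2\le u^2-u^2(y_0)$, which is uniformly strict when $u>0$. Your closing remark about a strict version is vague by comparison, but the statement does not need it.
\end{itemize}

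Finally, the lower bound on $u$ that you invoke follows most cleanly from the attainment fact you already flag. The function $u(\nu(p))=-\ip{\nu(p)}{X(p)}$ is continuous and $G$-invariant on $M$, hence bounded on the compact quotient $M/G$. This avoids separately arguing continuity of $u$ and compactness of $\H^n/G$.
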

\begin{proof}
    Consider \( \phi(z) = \ip{X}{X} = \abs{\bnabla u}^2 - u^2 \). Our goal is to prove \(\phi \leq 0\). Due to the cocompactness, \(\phi\) achieves its maximum at some \(y_0 \in \H^n\). Choose an normal coordinate \((z_1,\cdots,z_n)\) around \(y_0\). We have, at \(y_0\):  
    \[ 0 = \phi_i = 2\ip{X_i}{X}, \quad \forall i=1,\cdots,n.  \]
    By strict convexity of the hypersurface, \(\{X_i\}_{i=1}^n\) are linearly independent, forming a basis for \(T_p M\), where \(p = \nu^{-1}(y_0)\in M\). Then we conclude that \(X\) is parallel to its normal direction \(\nu\) at \(y_0\). Since \(X = \bnabla u - uz\), we have \(\bnabla u (y_0) = 0\). Therefore, \(\phi \leq \phi(y_0) = -u^2(y_0)\leq 0\).

\end{proof}

\subsection{Estimates on Principle Curvatures}
We firstly derive the evolution equation for \(\tau_{ij}\). We will denote \(\dot{F}_* (\bg) := \dot{F}_*^{kl}\bg_{kl}\). 
\begin{proposition}
    Under the ICF \eqref{eq:ICF}, there holds:
    \begin{equation}\label{eq:evolution of radii}
        \pd{}{t}\tau_{ij} = \dot{F}_*^{kl}\bnabla_k \bnabla_l \tau_{ij} + \ddot{F}_*^{kl,mn}\bnabla_i \tau_{kl}\bnabla_j \tau_{mn} - 2F_* \bg_{ij} + \dot{F}_*(\bg)\tau_{ij}.
    \end{equation}
\end{proposition}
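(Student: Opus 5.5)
We compute directly on the fixed hyperbolic space \((\H^n,\bg)\), where the metric and connection do not depend on \(t\). Since \(\tau_{ij}=u\bg_{ij}-\bnabla_i\bnabla_j u\) and \(\partial_t\) commutes with \(\bnabla\), the evolution equation \eqref{eq:evolution of support function} gives
\[
\pd{}{t}\tau_{ij} = \pd{u}{t}\bg_{ij}-\bnabla_i\bnabla_j \pd{u}{t} = -F_*\bg_{ij}+\bnabla_i\bnabla_j F_*.
\]
The proof then reduces to expressing \(\bnabla_i\bnabla_j F_*\) in terms of \(\dot F_*^{kl}\bnabla_k\bnabla_l\tau_{ij}\), quadratic gradient terms and zero-order terms.

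First, by the chain rule, \(\bnabla_j F_*=\dot F_*^{mn}\bnabla_j\tau_{mn}\). Differentiating once more gives
\[
\bnabla_i\bnabla_j F_* = \dot F_*^{kl}\bnabla_i\bnabla_j\tau_{kl} + \ddot F_*^{kl,mn}\bnabla_i\tau_{kl}\bnabla_j\tau_{mn}.
\]
Second, we commute derivatives so that \(\bnabla_i\bnabla_j\tau_{kl}\) becomes \(\bnabla_k\bnabla_l\tau_{ij}\). We use the Codazzi property \(\bnabla_i\tau_{jk}=\bnabla_j\tau_{ik}\), noted in Section~\ref{sec:gauss-map}, twice, together with the Ricci identity on \(\H^n\), whose curvature is constant \(-1\):
\[
\bar R_{ijkl}=-(\bg_{ik}\bg_{jl}-\bg_{il}\bg_{jk}),
\]
up to the sign convention of \eqref{eq:Riem-def}. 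Passing through \(\bnabla_i\bnabla_j\tau_{kl}=\bnabla_i\bnabla_k\tau_{jl}\), then \(\bnabla_k\bnabla_i\tau_{jl}\), then \(\bnabla_k\bnabla_l\tau_{ij}\), we obtain
\[
\bnabla_i\bnabla_j\tau_{kl} = \bnabla_k\bnabla_l\tau_{ij} + \bg_{ij}\tau_{kl} - \bg_{kl}\tau_{ij} + \bg_{il}\tau_{jk} - \bg_{jk}\tau_{il}.
\]
The signs should be checked against the convention, for instance by testing on \(u\) the support function of a hyperboloid, where \(\tau=r\bg\) is parallel and every term must cancel.

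Third, we contract with \(\dot F_*^{kl}\). We use homogeneity, \(\dot F_*^{kl}\tau_{kl}=F_*\). In the \(\tau\)-diagonal frame \(\dot F_*\) commutes with \(\tau\), so the cross terms \(\dot F_*^{kl}(\bg_{il}\tau_{jk}-\bg_{jk}\tau_{il})\) cancel by symmetry. This yields
\[
\dot F_*^{kl}\bnabla_i\bnabla_j\tau_{kl}=\dot F_*^{kl}\bnabla_k\bnabla_l\tau_{ij}+F_*\bg_{ij}-\dot F_*(\bg)\tau_{ij}.
\]
Substituting into the first formula gives
\[
\pd{}{t}\tau_{ij}= \dot F_*^{kl}\bnabla_k\bnabla_l\tau_{ij}+\ddot F_*^{kl,mn}\bnabla_i\tau_{kl}\bnabla_j\tau_{mn}+F_*\bg_{ij}-\dot F_*(\bg)\tau_{ij}-F_*\bg_{ij}.
\]

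The main obstacle is reconciling this with the zero-order terms in \eqref{eq:evolution of radii}, namely \(-2F_*\bg_{ij}+\dot F_*(\bg)\tau_{ij}\). These differ from the naive count above by the sign of the curvature term, so the whole difficulty lies in fixing the sign of \(\bar R\) in the commutation formula. With the curvature convention of \eqref{eq:Riem-def}, the commutation should produce \(-F_*\bg_{ij}+\dot F_*(\bg)\tau_{ij}\) instead, and together with the \(-F_*\bg_{ij}\) from \(\partial_t u\) this gives exactly \(-2F_*\bg_{ij}+\dot F_*(\bg)\tau_{ij}\). I would settle the sign by the explicit check on the hyperboloid and on a simple non-umbilic model before finalizing.
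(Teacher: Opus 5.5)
\textbf{Gap: the sign of the curvature terms is never established.} Your route is the same as the paper's: differentiate $\tau_{ij}=u\bg_{ij}-\bnabla_i\bnabla_j u$ in time, expand $\bnabla_i\bnabla_j F_*$ by the chain rule, use Codazzi twice plus the Ricci identity, then contract with $\dot F_*^{kl}$. But the one step that carries the content of the proposition is wrong, and you do not repair it. The commutation identity you display,
\[
\bnabla_i\bnabla_j\tau_{kl} = \bnabla_k\bnabla_l\tau_{ij} + \bg_{ij}\tau_{kl} - \bg_{kl}\tau_{ij} + \bg_{il}\tau_{jk} - \bg_{jk}\tau_{il},
\]
is the identity for curvature $+1$ (the Euclidean setting $\tau=\nabla^2u+ug$ on the sphere). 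It leads you to the zero-order term $-\dot F_*(\bg)\tau_{ij}$, which contradicts the statement, and you then simply assert the needed sign. This is not a convention issue. The commutator $\bnabla_i\bnabla_k\tau_{jl}-\bnabla_k\bnabla_i\tau_{jl}$ is a definite quantity fixed by the sectional curvature $-1$, and your own (correct) $\bar R_{ijkl}=-(\bg_{ik}\bg_{jl}-\bg_{il}\bg_{jk})$ in the convention \eqref{eq:Riem-def} already determines it. Your proposed test cannot settle it either. For $\tau=r\bg$ the four zero-order terms cancel in pairs for either sign. Even the two resulting evolution equations coincide on hyperboloids, since there $F_*=r$ and $\dot F_*(\bg)=1$, so $-2F_*\bg_{ij}+2\dot F_*(\bg)\tau_{ij}=0$.

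\textbf{The missing step.} Apply the Ricci identity correctly:
\[
\bnabla_i\bnabla_k\tau_{jl}=\bnabla_k\bnabla_i\tau_{jl}+\bar R_{ikj}{}^{p}\tau_{pl}+\bar R_{ikl}{}^{p}\tau_{jp},
\]
with $\bar R_{ikj}{}^{p}\tau_{pl}=\bg_{kj}\tau_{il}-\bg_{ij}\tau_{kl}$ and $\bar R_{ikl}{}^{p}\tau_{jp}=\bg_{kl}\tau_{ij}-\bg_{il}\tau_{kj}$. This gives
\[
\bnabla_i\bnabla_j\tau_{kl} = \bnabla_k\bnabla_l\tau_{ij} + \bg_{kj}\tau_{il} - \bg_{ij}\tau_{kl} + \bg_{kl}\tau_{ij} - \bg_{il}\tau_{kj}.
\]
Contracting with $\dot F_*^{kl}$ (the cross terms still cancel, as you said) produces $-F_*\bg_{ij}+\dot F_*(\bg)\tau_{ij}$. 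Adding the $-F_*\bg_{ij}$ coming from $\partial_t u$ gives exactly $-2F_*\bg_{ij}+\dot F_*(\bg)\tau_{ij}$; this is the paper's computation.

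\textbf{A check that does discriminate.} Use the Codazzi property you rely on. For $\tau=u\bg+s\,\bnabla^2u$ on a space form of curvature $c$, the same commutator gives $\bnabla_k\tau_{ij}-\bnabla_i\tau_{kj}=(1-sc)(u_k\bg_{ij}-u_i\bg_{kj})$. So Codazzi with $s=-1$ forces $c=-1$. The sign you used in the second-order commutation is the one belonging to $s=c=+1$.
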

\begin{proof}
    By definition, \(\tau_{ij} = u\bg_{ij} - \bnabla_i \bnabla_j u\). Then from \eqref{eq:evolution of support function} we get: 
    \[ \begin{aligned}
        \pd{}{t}\tau_{ij} &= \pd{u}{t}\bg_{ij} - \bnabla_i \bnabla_j \pd{u}{t} \\
        &= -F_* \bg_{ij} + \bnabla_i \bnabla_j F_* \\
        &= -F_* \bg_{ij} + \bnabla_i \left(\dot{F}_*^{kl}\bnabla_j \tau_{kl} \right) \\
        &= \dot{F}_*^{kl}\bnabla_i \bnabla_j \tau_{kl} + \ddot{F}_*^{kl,mn}\bnabla_i \tau_{kl} \bnabla_j \tau_{mn} - F_* \bg_{ij}.
    \end{aligned}
    \]
     Set \(\bar{R}(X,Y)Z = -\bnabla_X\bnabla_Y Z + \bnabla_Y\bnabla_X Z + \bnabla_{[X,Y]}Z\), and \(\bar{R}_{ijkl} = \bg(R(e_i,e_j)e_k,e_l)\), \( \bar{R}_{ikj}^{\quad p} = \bar{R}_{ikjm}g^{mp} \). By Ricci's identity, we have
    \[ \begin{aligned}
        \bnabla_i \bnabla_j \tau_{kl} &= \bnabla_i \bnabla_k \tau_{jl} \\
        &= \bnabla_k \bnabla_i \tau_{jl} + \bar{R}_{ikj}^{\quad p}\tau_{pl} + \bar{R}_{ikl}^{\quad p}\tau_{jp}\\
        &= \bnabla_k \bnabla_l \tau_{ij} + \bg_{kj}\tau_{il} - \bg_{ij}\tau_{kl} + \bg_{kl}\tau_{ij} - \bg_{il}\tau_{kj},
    \end{aligned} \]
    Using the above equality and the Euler's identity \(\sum_{k,l=1}^{n}\dot{F}_*^{kl}\tau_{kl} = F_*\), we have 
    \[ \pd{}{t}\tau_{ij} = \dot{F}_*^{kl}\bnabla_k \bnabla_l \tau_{ij} + \ddot{F}_*^{kl,mn}\bnabla_i \tau_{kl}\bnabla_j \tau_{mn} - 2F_* \bg_{ij} + \dot{F}_*^{kl}\bg_{kl}\tau_{ij}. \]
\end{proof}

\begin{remark}\label{rem:curvature-estimate}
   It seems that the idea using in  \cite{co-compact} can not  establish uniform curvature estimates of the inverse curvature flow. To overcome this difficulty, we will instead establish the uniform curvature estimates to the normalized flow in Section \ref{sec:normalized-flow}. As we will demonstrate in Lemma \ref{lem:estimate-on-eigenvalues}, a uniform upper bound for the principal radii of the normalized flow can be established independently, without relying on any prior bounds from the unnormalized flow. Therefore, the scaling relation between the normalized and unnormalized flows will naturally yield an exponential decay bound $\tilde{C}e^{-t}\bg_{ij} \leq \tau_{ij} \leq C e^{-t}\bg_{ij}$ for the unnormalized flow.
\end{remark}

We conclude the above remark to the following lemma. The crucial curvature estimate (Lemma \ref{lem:estimate-on-eigenvalues}) will be proved later in Section \ref{sec:normalized-flow}.
\begin{lemma}
    There exists tow time-independent constant \(C,\tilde{C} > 0\) (only depending on \(X_0\)), such that \(\tau_{ij}\) has the upper bound:
    \begin{equation}\label{eq:max radii estimate}
       \tilde{C}e^{-t}\bg_{ij}\leq  \tau_{ij} \leq C e^{-t}\bg_{ij}.
    \end{equation}
\end{lemma}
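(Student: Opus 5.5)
The lower and upper bounds in \eqref{eq:max radii estimate} have different sources. The lower bound follows from the speed estimate together with concavity-type inequalities for \(F_*\). The upper bound is reduced, as Remark \ref{rem:curvature-estimate} indicates, to the uniform estimate for the normalized flow in Lemma \ref{lem:estimate-on-eigenvalues}.

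For the upper bound we pass to the normalized flow \(\tilde X = e^t X\). Its support function is \(\tilde u = e^t u\), and its radii tensor is \(\tilde\tau_{ij} = \tilde u\bg_{ij} - \bnabla_i\bnabla_j\tilde u = e^t\tau_{ij}\), since the Gauss map is unchanged by scaling. Lemma \ref{lem:estimate-on-eigenvalues} gives a time-independent bound \(\tilde\tau_{ij}\le C\bg_{ij}\), with \(C\) depending only on \(X_0\), and rescaling immediately yields \(\tau_{ij}\le Ce^{-t}\bg_{ij}\). I expect that lemma to be proved by a maximum principle applied to the largest eigenvalue of \(\tilde\tau\), using the evolution \eqref{eq:evolution of radii} rescaled. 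The gradient term \(\ddot F_*\) has the favourable sign, since \(F_* = (s_n/s_{n-k})^{1/k}\) is concave in \(\tau\). The reaction terms \(-2F_*\bg_{ij} + \dot F_*(\bg)\tau_{ij}\), together with the extra \(+\tilde\tau_{ij}\) coming from the normalization, would be controlled using \(\tilde u\) and \(\tilde F_* = e^tF_*\), which are both two-sided bounded by Propositions \ref{prop:support-estimate} and \ref{prop:speed-estimate}. The main obstacle is this step. The term \(\dot F_*(\bg)\lambda_{\max}\) competes against \(-2F_*\), and one likely needs an auxiliary function such as \(\lambda_{\max}/(\tilde u - a)\) or \(\lambda_{\max}\,\tilde F_*^{-1}\) to absorb it. This is why the estimate is deferred to Section \ref{sec:normalized-flow}.

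For the lower bound, let \(\lambda_1\le\cdots\le\lambda_n\) be the eigenvalues of \(\tau\) with respect to \(\bg\). Proposition \ref{prop:speed-estimate} gives \(F_*(\lambda) = (s_n/s_{n-k})^{1/k}(\lambda)\ge C_-e^{-t}\). Since \(s_n/s_{n-k}\) is a ratio of degree-\(k\) homogeneous form, fixing the larger radii and letting \(\lambda_1\to0\) shows \(s_n/s_{n-k}\le C\lambda_1\,\lambda_n^{k-1}\). Concretely, every term of \(s_{n-k}\) that omits \(\lambda_1\) bounds \(s_n/\lambda_1\) from below up to a factor \(\lambda_n^{\,k-1}\), so \(s_n/s_{n-k}\le c_{n,k}\,\lambda_1\lambda_n^{k-1}\). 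Combining this with the upper bound \(\lambda_n\le Ce^{-t}\) gives
\[ (C_-e^{-t})^k\le c_{n,k}\,\lambda_1\,(Ce^{-t})^{k-1}, \]
hence \(\lambda_1\ge\tilde C e^{-t}\). This yields \(\tilde Ce^{-t}\bg_{ij}\le\tau_{ij}\), and all constants depend only on \(X_0\) through \(u(\cdot,0)\), \(F_*(\cdot,0)\) and the normalized curvature bound.
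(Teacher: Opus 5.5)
Your proposal is correct and follows the paper's argument. The upper bound comes from Lemma \ref{lem:estimate-on-eigenvalues} via $\tilde\tau_{ij}=e^t\tau_{ij}$, using $\tilde u\ge C_1$ to remove the factor $\tilde u^\alpha$. The lower bound comes from $F_*\ge C_-e^{-t}$ together with $s_n/s_{n-k}\le c(n,k)\,\lambda_{\min}\lambda_{\max}^{k-1}$, exactly as spelled out in the proof of Theorem \ref{thm:long-time}. Your speculation about how the normalized curvature estimate is proved is not needed for this reduction. The paper actually uses $\tilde u^\alpha\tilde\lambda_1$, with Lemma \ref{lem:algebraic} forcing $Z\ge\delta$ once $\tilde\lambda_1$ is large.
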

\begin{proof}
    By Lemma \ref{lem:estimate-on-eigenvalues} and the scaling relation between the normalized and unnormalized flows.
\end{proof}

\section{Normalized Flow and Long-time Existence}\label{sec:normalized-flow}
In the next two sections, we mainly work on the normalized hypersurface: 
\begin{equation}
    \tilde{X}(p,t) := e^t X(p,t).
\end{equation}
If \(X(p,t)\) satisfies the unnormalized flow \eqref{eq:ICF}, then \(\tilde{X}(p,t)\) satisfies the normalized flow:  
\[ \pd{\tilde{u}}{t}(y,t) = -F_*(\tilde{\tau})(y,t) + \tilde{u}(y,t), \quad \forall y\in \H^n, t\in (0,T). \]
Here \(\tilde{u}\)  is the support function of \(\tilde{X}\) and $\tilde{\tau}=\tilde{u}\overline{g}-\overline{\nabla}^2\tilde{u}$
is the curvature radius tensor of $\tilde{X}$. Similarly, we use tilde to denote all geometric quantities related to the normalized hypersurface \(\tilde{X}\). However, to simplify the notations, we may drop the tildes when there is no ambiguity, and simply write the normalized flow as 
\begin{equation}\label{eq:normalized-flow} 
    \pd{u}{t}(y,t) = -F_*(\tau)(y,t) + u(y,t), \quad \forall y\in \H^n, t\in (0,T). 
\end{equation}

Let \(G:M\to \H^n\) be the Gauss map of the strictly convex cocompact hypersurface \(M\) in the ambient Minkowski space \(\R^{n,1}\). Denote \(\mathscr{L} = \partial_t - F_*^{kl}\bnabla_k \bnabla_l\) with respect to the hyperbolic metric \(\bg\). 
Let \(Z := \dot{F_*}(\bg)-1 \). By the concavity and the homogeneity we know that \(Z \geq 0\), and \(Z=0\) if and only if \(\kappa_1=\kappa_2 =\cdots=\kappa_n\) (see Lemma \ref{lem:algebraic}).

We first derive some evolution equations for the normalized flow. Here we omit the tildes for the geometric quantities of the normalized flow. 
\begin{lemma}\label{lem:normalized-equation-for-u}
The normalized evolution equation for the support function \(u\) is
\[ \mathscr{L}u = \partial_t u - F_*^{kl}\bnabla_k\bnabla_l u = - Zu.\]
\end{lemma}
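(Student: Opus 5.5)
The identity follows directly from the normalized flow equation \eqref{eq:normalized-flow}, the definition \(\tau_{kl} = u\bg_{kl} - \bnabla_k\bnabla_l u\), and the degree-one homogeneity of \(F_*\). I would therefore argue exactly as in \eqref{eq:expanded form of evolution of support function}, adding only the extra term \(+u\) that comes from the rescaling.

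First, since \(F_*\) is homogeneous of degree one in \(\tau\), Euler's identity gives \(F_* = \dot{F}_*^{kl}\tau_{kl}\). Substituting the definition of \(\tau_{kl}\) yields
\[
F_* = \dot{F}_*^{kl}\left(u\bg_{kl} - \bnabla_k\bnabla_l u\right) = u\,\dot{F}_*(\bg) - \dot{F}_*^{kl}\bnabla_k\bnabla_l u .
\]
Inserting this into \eqref{eq:normalized-flow} gives
\[
\partial_t u = \dot{F}_*^{kl}\bnabla_k\bnabla_l u - u\,\dot{F}_*(\bg) + u = \dot{F}_*^{kl}\bnabla_k\bnabla_l u - \bigl(\dot{F}_*(\bg)-1\bigr)u .
\]
Since \(Z = \dot{F}_*(\bg) - 1\), moving the second-order term to the left-hand side gives \(\mathscr{L}u = -Zu\).

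There is no genuine obstacle; the computation is purely algebraic. One point needs checking: the derivative \(\dot{F}_*^{kl}\) must be evaluated at the normalized tensor \(\tilde{\tau}\). This is consistent because \(\dot{F}_*\) is homogeneous of degree zero, so it is unchanged by the scaling \(\tilde{u} = e^t u\). The same remark confirms that \(Z\) is a well-defined quantity for the normalized flow.
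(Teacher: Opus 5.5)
Your proof is correct and is essentially the paper's argument. The paper only remarks that the identity follows from the unnormalized equation \eqref{eq:expanded form of evolution of support function} via the scaling relations $\tilde{u} = e^t u$, $\tilde{\tau}_{ij} = e^t\tau_{ij}$ (using that $\dot{F}_*$ is homogeneous of degree zero), which is the same Euler-identity computation you carry out directly on the normalized flow \eqref{eq:normalized-flow}.
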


\begin{lemma}
    The normalized evolution equation for speed $F_*$ is 
    \[\mathscr{L}F_* =  \partial_t F_* - F_*^{kl}\bnabla_k\bnabla_l F_* = - F_* \left(\dot{F_*}(\bg) - 1\right) = - ZF_*.  \]
\end{lemma}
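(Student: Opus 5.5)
The plan is to differentiate $F_*(\tau)$ in time along the normalized flow \eqref{eq:normalized-flow}, write $\partial_t\tau$ as $\bnabla^2$ of $F_*$ plus zeroth-order terms, and use Euler's identity to collapse those terms.

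First, since $\bg$ is fixed on $\H^n$ and $F_*$ depends only on $\tau$ (and $\bg$), the chain rule gives
\[ \partial_t F_* = \dot{F}_*^{kl}\,\partial_t\tau_{kl}. \]

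Second, from $\tau_{kl} = u\bg_{kl} - \bnabla_k\bnabla_l u$ and \eqref{eq:normalized-flow} we get
\[ \partial_t \tau_{kl} = (\partial_t u)\bg_{kl} - \bnabla_k\bnabla_l(\partial_t u) = (-F_* + u)\bg_{kl} + \bnabla_k\bnabla_l F_* - \bnabla_k\bnabla_l u. \]
The two $u$-terms recombine into $\tau_{kl}$, so
\[ \partial_t\tau_{kl} = \bnabla_k\bnabla_l F_* - F_*\bg_{kl} + \tau_{kl}. \]

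Third, contract with $\dot{F}_*^{kl}$ and apply Euler's identity $\dot{F}_*^{kl}\tau_{kl} = F_*$, which holds because $F_*$ is homogeneous of degree one. This gives
\[ \partial_t F_* = \dot{F}_*^{kl}\bnabla_k\bnabla_l F_* - F_*\dot{F}_*(\bg) + F_*, \]
and hence $\mathscr{L}F_* = -F_*\big(\dot{F}_*(\bg)-1\big) = -ZF_*$.

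Equivalently, one can start from Proposition \ref{prop:evolution of speed}(1) and note that the extra $+u$ term in the normalized flow contributes exactly $\dot{F}_*^{kl}\tau_{kl} = F_*$. There is no real obstacle; the only point needing care is that the $u$-terms in $\partial_t\tau_{kl}$ recombine into $\tau_{kl}$, so that Euler's identity applies.
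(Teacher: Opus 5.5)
Your computation is correct. Differentiating $\tau_{kl}=u\bg_{kl}-\bnabla_k\bnabla_l u$ along \eqref{eq:normalized-flow} gives $\partial_t\tau_{kl}=\bnabla_k\bnabla_l F_*-F_*\bg_{kl}+\tau_{kl}$. Contracting with $\dot{F}_*^{kl}$ and using Euler's identity then yields $\mathscr{L}F_*=-ZF_*$.

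The paper takes a slightly different route and does not recompute anything. It obtains this lemma, together with the normalized equations for $u$ and $\tau_{ij}$, from the unnormalized Proposition~\ref{prop:evolution of speed}(1) via the scaling relations $\tilde u=e^t u$, $\tilde F_*=e^t F_*$, $\tilde\tau_{ij}=e^t\tau_{ij}$. In that argument the extra $+\tilde F_*$ comes from differentiating the factor $e^t$. The operator coefficients are unchanged because $\dot{F}_*^{kl}$ is homogeneous of degree zero.

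The two arguments are equivalent one-line consequences of homogeneity. Yours is self-contained and uses only \eqref{eq:normalized-flow} and the time-independence of $\bg$. The paper's transfers all the unnormalized evolution equations to the normalized flow at once.
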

\begin{lemma}
    The normalized evolution equation for the curvature radius tensor \(\tau_{ij}\) is given by 
    \[ \mathscr{L}\tau_{ij} = \partial_t \tau_{ij} - F_*^{kl}\bnabla_k\bnabla_l \tau_{ij} = F_*^{kl,mn}\bnabla_i\tau_{kl}\bnabla_j \tau_{mn} - 2F_* \bg_{ij} + \dot{F_*}(\bg)\tau_{ij} + \tau_{ij}.  \] 
\end{lemma}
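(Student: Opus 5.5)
Starting from the unnormalized evolution of \(\tau_{ij}\) in \eqref{eq:evolution of radii} and passing to the normalized flow, I would argue directly from the normalized support function equation \eqref{eq:normalized-flow}. Write \(\tau_{ij} = u\bg_{ij} - \bnabla_i\bnabla_j u\). Since \(\bg\) is fixed in time, differentiating gives
\[ \partial_t \tau_{ij} = (\partial_t u)\bg_{ij} - \bnabla_i\bnabla_j(\partial_t u). \]
Substituting \(\partial_t u = -F_* + u\) yields
\[ \partial_t\tau_{ij} = -F_*\bg_{ij} + u\bg_{ij} + \bnabla_i\bnabla_j F_* - \bnabla_i\bnabla_j u = -F_*\bg_{ij} + \bnabla_i\bnabla_j F_* + \tau_{ij}. \]
The extra linear term \(+\tau_{ij}\) is the only difference from the unnormalized computation. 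Equivalently, the normalized \(\tilde\tau = e^t\tau\) after a time reparametrization consistent with the scaling, and \(\partial_t(e^t\tau) = e^t\tau + e^t\partial_t\tau\) produces the same extra term.

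Next I would expand \(\bnabla_i\bnabla_j F_*\) by the chain rule:
\[ \bnabla_i\bnabla_j F_* = \dot F_*^{kl}\bnabla_i\bnabla_j\tau_{kl} + \ddot F_*^{kl,mn}\bnabla_i\tau_{kl}\bnabla_j\tau_{mn}. \]
Then I would commute derivatives exactly as in the proof of \eqref{eq:evolution of radii}. Using the Codazzi property \(\bnabla_i\tau_{jk}=\bnabla_j\tau_{ik}\) and Ricci's identity on \(\H^n\) (sectional curvature \(-1\)),
\[ \bnabla_i\bnabla_j\tau_{kl} = \bnabla_k\bnabla_l\tau_{ij} + \bg_{kj}\tau_{il} - \bg_{ij}\tau_{kl} + \bg_{kl}\tau_{ij} - \bg_{il}\tau_{kj}. \]

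Contracting with \(\dot F_*^{kl}\) and using its symmetry gives \(\dot F_*^{kl}\bg_{kj}\tau_{il} = \dot F_*^{kl}\bg_{il}\tau_{kj}\), so those two terms cancel. Euler's identity \(\dot F_*^{kl}\tau_{kl}=F_*\) turns \(-\bg_{ij}\dot F_*^{kl}\tau_{kl}\) into \(-F_*\bg_{ij}\), and the remaining term is \(\dot F_*(\bg)\tau_{ij}\). Collecting,
\[ \partial_t\tau_{ij} = \dot F_*^{kl}\bnabla_k\bnabla_l\tau_{ij} + \ddot F_*^{kl,mn}\bnabla_i\tau_{kl}\bnabla_j\tau_{mn} - 2F_*\bg_{ij} + \dot F_*(\bg)\tau_{ij} + \tau_{ij}. \]
Moving the second-order term to the left gives the stated formula for \(\mathscr L\tau_{ij}\).

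The only delicate point is the sign convention in the commutator. Here I would simply reuse the identity already verified in the proof of \eqref{eq:evolution of radii}, so no step is a genuine obstacle.
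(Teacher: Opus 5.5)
Your argument is correct, but it is organized differently from the paper's. The paper does not recompute anything. It substitutes \(\tilde\tau_{ij}=e^t\tau_{ij}\) and \(\tilde F_*=e^tF_*\) into the unnormalized equation \eqref{eq:evolution of radii}. Because \(\dot F_*\) is homogeneous of degree \(0\) and \(\ddot F_*\) of degree \(-1\), every term there rescales by exactly \(e^t\), and the only new term is \(\tilde\tau_{ij}\), coming from \(\partial_t e^t\). You instead redo the derivation of \eqref{eq:evolution of radii} starting from the normalized support equation \(\partial_t u=-F_*+u\). Your route is self-contained and avoids the homogeneity bookkeeping; the paper's is shorter and reuses the earlier proposition.

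Two small remarks on your aside. First, no time reparametrization is involved: \(\partial_t(e^tu)=e^tu-F_*(e^t\tau)\) holds exactly by the \(1\)-homogeneity of \(F_*\). Second, as written, the aside only accounts for the extra \(\tau_{ij}\). Invariance of the quadratic gradient term is where the degree \(-1\) homogeneity of \(\ddot F_*\) is needed.

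One justification in your main computation should be tightened. The cancellation \(\dot F_*^{kl}\bg_{kj}\tau_{il}=\dot F_*^{kl}\bg_{il}\tau_{kj}\) does not follow from symmetry of \(\dot F_*^{kl}\) alone. In a frame with \(\bg_{ij}=\delta_{ij}\), the two sides are \((\tau\dot F_*)_{ij}\) and \((\dot F_*\tau)_{ij}\), so you need \(\dot F_*\) to commute with \(\tau\). This holds because \(F_*\) depends only on the eigenvalues of \(\tau\), so \(\dot F_*\) is diagonal in an eigenframe of \(\tau\).
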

The above equations can be easily verified by using \(\tilde{u} = e^t u\), \(\tilde{F_*} = e^t F_*\) and \(\tilde{\tau}_{ij} = e^t \tau_{ij}\). Moreover, by Proposition ~\ref{prop:support-estimate}, Proposition ~\ref{prop:speed-estimate}, and Proposition ~\ref{prop:gradient-estimate}, we have:
\begin{proposition}\label{prop:normalized-estimates}
    The normalized equation \eqref{eq:normalized-flow} satisfies 
    \begin{equation}
        \begin{aligned}
            &C_1 \leq \tilde{u}(y,t) \leq C_2, \\
            &C_1\leq \tilde{F}_*(y,t) \leq C_2, \\
            &\abs{\bnabla \tilde{u}}^2(y,t) \leq \tilde{u}^2(y,t),
        \end{aligned}\quad \forall (y,t)\in \H^n\times (0,T)
    \end{equation}
    for some positive constants \(C_1,C_2\) independent of \(T\).
\end{proposition}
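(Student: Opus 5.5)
The plan is to transfer the three unnormalized estimates to the normalized flow through the scaling $\tilde X = e^t X$. The first step is to record how each quantity scales.

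Since the Gauss map is invariant under dilation, the normal of $\tilde X(p,t)$ at $p$ equals the normal of $X(p,t)$. Hence, for $y \in \H^n$,
\[
\tilde u(y,t) = -\ip{y}{e^t X} = e^t u(y,t).
\]
The tensor $\tau_{ij} = u\bg_{ij} - \bnabla_i\bnabla_j u$ is linear in $u$, so $\tilde\tau_{ij} = e^t \tau_{ij}$. Because $F_*$ is homogeneous of degree one in $\tau$, we also get $\tilde F_*(y,t) = e^t F_*(y,t)$. As a consistency check, $\partial_t \tilde u = e^t u + e^t\partial_t u = \tilde u - \tilde F_*$, which is exactly \eqref{eq:normalized-flow}.

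Next we apply the unnormalized bounds. Proposition~\ref{prop:support-estimate} gives
\[
\inf u(\cdot,0) \le \tilde u(y,t) \le \sup u(\cdot,0),
\]
and these bounds are positive and finite by co-compactness together with the remark allowing a translation so that $u > 0$. Proposition~\ref{prop:speed-estimate} gives $C_- \le \tilde F_* \le C_+$. The constants $C_\pm$ come from $\inf$ and $\sup$ of $uF_*^{-1}$ at time zero combined with the support bounds, so they are independent of $T$. We take $C_1$ to be the smaller and $C_2$ the larger of these constants.

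Finally, for the gradient estimate, $\bnabla\tilde u = e^t\bnabla u$. The inequality \eqref{eq:gradient-estimate} applies to $X(\cdot,t)$ for each $t$, because $M_t$ stays spacelike, strictly convex and standard co-compact while the solution exists. Multiplying $\abs{\bnabla u}^2 \le u^2$ by $e^{2t}$ gives $\abs{\bnabla\tilde u}^2 \le \tilde u^2$.

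There is no real obstacle here. The only point needing care is that all the constants come from time-zero data, which is what makes them independent of $T$. It is also worth noting that strict convexity is preserved on $[0,T)$, since $\tau$ stays positive definite as long as the flow exists; this is exactly what the gradient proposition needs.
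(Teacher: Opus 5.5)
Your proposal is correct and matches the paper's argument: the scaling relations $\tilde u = e^t u$, $\tilde\tau_{ij} = e^t\tau_{ij}$, $\tilde F_* = e^t F_*$ turn the $e^{-t}$-bounds of Propositions~\ref{prop:support-estimate} and~\ref{prop:speed-estimate} into bounds independent of $T$. The gradient estimate of Proposition~\ref{prop:gradient-estimate} is unchanged by the dilation, and your check that all constants come from time-zero data is exactly what gives the independence from $T$.
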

\subsection{Estimates on Principal Curvature}
Now we are going to bound \(\tilde{\tau}_{ij}\) from above. We will consider the test function 
\[ \varphi = {\tilde{u}^\alpha}\cdot{\tilde{\lambda}_1}, \]
where \(\tilde{\lambda}_1\) is the largest eigenvalue of \(\tilde{\tau}\), and \(\alpha> 0\) is a positive constant to be determined. 
\par We will first extend the lemma 5 in \cite{Brendle2017} to the parabolic case:
\begin{lemma}\label{lem:Brendle-lem}
    Suppose that \(h = h(t)\) is a time-dependent symmetric \((0,2)\)-tensor on a manifold \(M\) with a time-dependent metric \(g(t)\). Let \(\nabla\) be the Levi-Civita connection of \(g(t)\). Suppose the largest eigenvalue \(\lambda_1\) of \(h\) (with respect to the metric \(g\)) has multiplicity \(m\) at \((y_0,t_0)\in M \times \R\) and the eigenvalues are arranged in descending order that $\lambda_1 = \cdots = \lambda_m > \lambda_{m+1} \geq \cdots \geq \lambda_n$. 
    Choose a local normal coordinate system $(y^1, \dots, y^n)$ around $y_0$ such that at $(y_0, t_0)$, the following hold: 
    \begin{itemize}
        \item $g_{ij} = \delta_{ij}$; 
        \item $h$ is diagonalized, and $h_{ij} = \lambda_i \delta_{ij}$;
    \end{itemize}
    Then for any smooth function \(\varphi(y,t)\) such that \(\varphi \geq \lambda_1\) in \(M \times \R\), and \(\varphi = \lambda_1\) at \((y_0,t_0)\), we have, at \((y_0,t_0)\), \(\forall i=1,\cdots,n\),
    \begin{equation}\label{eq:Brendle-lem}
        \left\{ \begin{aligned}
                \delta_{\alpha\beta}\nabla_i \varphi = \nabla_i h_{\alpha\beta}, \quad 1\leq \alpha,\beta\leq m \\
                \nabla_i \nabla_i \varphi \geq \nabla_i\nabla_i h_{11} + 2\sum_{p>m}\frac{h_{1pi}^2}{\lambda_1 - \lambda_p}
            \end{aligned}\right. 
    \end{equation}
    Moreover, at \((y_0,t_0)\), we have 
    \begin{equation}\label{eq:Brendle-lem-t}
        \delta_{\alpha\beta}\pd{\varphi}{t} = \nabla_{t}h_{\alpha\beta}, \quad 1\leq \alpha,\beta\leq m,
    \end{equation}
    where \(\nabla_t\) is defined by 
    \begin{equation}
    \nabla_t \partial_i:= \sum_{p=1}^{n}\left(\frac{1}{2}g^{pj}\partial_t g_{ij}\right)\partial_p.    
    \end{equation}  
Note that, if $h$ is the second fundamental form, the above connection is  the canonical spacetime connection defined by \eqref{eq:canonical-spacetime-connection}.
 
 \end{lemma}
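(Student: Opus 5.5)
The proof follows Brendle's argument for his Lemma 5, with time treated as one more direction. The key observation is that, for every fixed unit vector field $\xi$, the function $h(\xi,\xi)$ is bounded above by $\lambda_1$. Hence the smooth function $\varphi - h(\xi,\xi)/g(\xi,\xi)$ is nonnegative and vanishes at $(y_0,t_0)$ whenever $\xi(y_0,t_0)$ is a unit $\lambda_1$-eigenvector. It therefore has a space-time minimum at $(y_0,t_0)$. At that point its spatial gradient vanishes, its time derivative vanishes (because $t_0$ is an interior time, as $\varphi$ is defined on $M\times\R$), and its spatial Hessian is nonnegative. The work consists of choosing the vector fields $\xi$ well, so that the derivatives of $h(\xi,\xi)$ produce the stated expressions.

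\emph{First-order identities.} Let $v=\sum_{\alpha\le m} c_\alpha \partial_\alpha$ with $|c|=1$, where the coordinate fields are those of the chosen normal coordinates. Extend $v$ as a constant-coefficient field in $(y,t)$, so that $\nabla_i v=0$ at $y_0$. Put $\psi=h(v,v)/g(v,v)$; then $\psi\le\lambda_1\le\varphi$, with equality at $(y_0,t_0)$. Since $\nabla g=0$, vanishing of the spatial gradient gives
\[ \sum c_\alpha c_\beta \nabla_i h_{\alpha\beta}=\nabla_i\varphi . \]
This holds for all unit $c$, so polarization yields $\nabla_i h_{\alpha\beta}=\delta_{\alpha\beta}\nabla_i\varphi$.

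For the time derivative, note that
\[ \partial_t\bigl(h(v,v)/g(v,v)\bigr)=\bigl(\partial_t h(v,v)\bigr)-\lambda_1\,\partial_t g(v,v) \]
at the point. By definition of $\nabla_t$,
\[ \nabla_t h_{\alpha\beta}=\partial_t h_{\alpha\beta}-\tfrac12(\partial_t g_{\alpha p})h_{p\beta}-\tfrac12 h_{\alpha p}\partial_t g_{p\beta}, \]
and since $h_{p\beta}=\lambda_1\delta_{p\beta}$ for $\beta\le m$, this equals $\partial_t h_{\alpha\beta}-\lambda_1\partial_t g_{\alpha\beta}$. Thus $\partial_t\psi=\sum c_\alpha c_\beta\nabla_t h_{\alpha\beta}$, and polarization gives \eqref{eq:Brendle-lem-t}.

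\emph{Second-order inequality.} Fix $i$ and use the vector field $\xi=\partial_1+\sum_{p>m}b_p y^i\,\partial_p$ at time $t_0$, with constants $b_p$ to be chosen. Expanding to second order along the $y^i$-direction, $\nabla_i\nabla_i$ of $h(\xi,\xi)/g(\xi,\xi)$ at $y_0$ equals
\[ \nabla_i\nabla_i h_{11}+4\sum_{p>m}b_p h_{1pi}+2\sum_{p>m}b_p^2(\lambda_p-\lambda_1). \]
The first-order terms vanish because of normal coordinates and because $h_{1p}=0$. The $\lambda_1$ term comes from differentiating $g(\xi,\xi)$, whose second derivative contributes $-2\lambda_1\sum b_p^2$ up to curvature terms that cancel against the corresponding terms in $h$. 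The minimum condition says $\nabla_i\nabla_i\varphi$ is at least this quantity. Maximizing over $b_p$ with $b_p=h_{1pi}/(\lambda_1-\lambda_p)$ gives the added term $2\sum_{p>m}h_{1pi}^2/(\lambda_1-\lambda_p)$, which is \eqref{eq:Brendle-lem}.

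\emph{Main obstacle.} The delicate step is this second-order expansion: keeping track of the Christoffel symbols and of the second derivatives of $g$ in normal coordinates. The cleanest route is to work tensorially along the geodesic $s\mapsto\exp_{y_0}(se_i)$ with $\xi$ chosen parallel-plus-linear, so that $\nabla_i\nabla_i(h(\xi,\xi))=(\nabla_i\nabla_i h)(\xi,\xi)+4(\nabla_i h)(\nabla_i\xi,\xi)+2h(\nabla_i\xi,\nabla_i\xi)$, with $\nabla_i\nabla_i\xi=0$ and the normalization by $g(\xi,\xi)$ handled similarly. The last remark in the statement, that $\nabla_t$ agrees with the connection \eqref{eq:canonical-spacetime-connection} when $h$ is the second fundamental form, is immediate from the definitions.
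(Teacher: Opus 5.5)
Your proposal is correct. For the new time-derivative identity it is essentially the paper's argument: compare $\varphi$ with $h(v,v)$ for a time-independent extension of a unit vector in the $\lambda_1$-eigenspace, set the $t$-derivative to zero, and use $h_{p\beta}=\lambda_1\delta_{p\beta}$ for $\beta\le m$ to identify $\partial_t h_{\alpha\beta}-\lambda_1\partial_t g_{\alpha\beta}$ with $\nabla_t h_{\alpha\beta}$. The paper works with $h(v,v)-\varphi\, g(v,v)$ instead of your Rayleigh quotient, which changes nothing.

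For the spatial assertions the paper only cites Brendle's Lemma 5, whereas you reprove them. Your second-order computation is right: with the quotient $h(\xi,\xi)/g(\xi,\xi)$, the frame-curvature contributions $2\lambda_1\partial_i\Gamma^1_{i1}$ appear identically in $\partial_i\partial_i h(\xi,\xi)$ and in $\lambda_1\partial_i\partial_i g(\xi,\xi)$, so they cancel and leave exactly $\nabla_i\nabla_i h_{11}+4\sum_{p>m} b_p h_{1pi}-2\sum_{p>m} b_p^2(\lambda_1-\lambda_p)$.
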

\begin{proof}
    The first assertion \eqref{eq:Brendle-lem} follows directly from the lemma 5 in \cite{Brendle2017}. We only need to prove \eqref{eq:Brendle-lem-t}. Let \((y^1,\cdots,y^n,t)\) be a local coordinate of \(M\times \R\) around \((y_0,t_0)\) such that \(g\) and \(h\) are diagonal at \((y_0,t_0)\), and \(g_{ij}=\delta_{ij}\) at \((y_0,t_0)\). By switching the indexes, we also assume that \(\lambda_1 = h_{11} = \cdots = h_{mm}\) at $(y_0,t_0)$. Let \(\gamma(t) = (y_0,t), t\in (t_0-\epsilon,t_0 + \epsilon)\) be a curve passing through \((y_0,t_0)\), with \(\dot{\gamma}(t) = \partial_t\). Let \[ v(t) = \sum_{i=1}^{n}v^i(t)\pd{}{y^i}(y_0,t)\] be a spatial vector field along \(\gamma(t)\), such that \(v(t_0) \in \text{span}(e_1,\cdots,e_m)\) and \(v'(t_0) \in \text{span}(e_{m+1},\cdots,e_n)\). That is, 
    \[ \begin{aligned}
        v^k(t_0) &= 0, \quad m < k \leq n, \\
        \pd{v^\alpha}{t}(t_0) &= 0, \quad 1\leq \alpha \leq m.
    \end{aligned} \]
    Since 
    \[ t\mapsto h(v(t),v(t)) - \varphi(\gamma(t))\cdot g(v(t),v(t)) \]
    attains its maximum at \(t = t_0\), we have 
    \[ \begin{aligned}
    0 &= \frac{\dd}{\dd t}\bigg|_{t=t_0}\left[h(v(t),v(t)) - \varphi(y_0,t)\cdot g(v(t),v(t))\right] \\
    &= \left(\pd{}{t}h_{ij}\right)v^i v^j + 2h_{ij}\pd{v^i}{t}v^j - \pd{\varphi}{t}g_{ij}v^i v^j \\
    &\quad -\varphi\left(\pd{}{t}g_{ij}\right)v^i v^j - 2\varphi g_{ij}\pd{v^i}{t} v^j \bigg|_{t=t_0}\\
    &= \left(\pd{}{t}h_{ij}\right)v^i(t_0) v^j(t_0) - \pd{\varphi}{t}g_{ij}v^i(t_0) v^j(t_0) -\varphi\left(\pd{}{t}g_{ij}\right)v^i(t_0) v^j(t_0).
    \end{aligned}
    \]
    Combined with the arbitrary choice of \(v(t_0) = \sum_{\alpha=1}^{m}v^\alpha(t_0) \pd{}{y^\alpha} \in \text{span}(e_1,\cdots,e_m)\) and the symmetry of \(\pd{}{t}h_{ij}\), \(g_{ij}\), and \(\pd{}{t}g_{ij}\), we conclude that 
    \[  \pd{}{t}h_{\alpha\beta} - \pd{\varphi}{t}g_{\alpha\beta} - \varphi \pd{}{t}g_{\alpha\beta} = 0,\quad \forall 1\leq \alpha,\beta \leq m.  \] 
  Thus, we have 
    \[ \begin{aligned}
        \nabla_t h_{\alpha\beta} &= \pd{}{t} h_{\alpha\beta} - h\left(\nabla_t \pd{}{y^\alpha}, \pd{}{y^\beta}\right) - h\left(\pd{}{y^\alpha}, \nabla_t\pd{}{y^\beta}\right)\\
        &= \pd{}{t} h_{\alpha\beta} - h\left(\frac{1}{2}g^{kj}\partial_{t}g_{\alpha j}\pd{}{y^k}, \pd{}{y^\beta}\right)-h\left( \pd{}{y^\alpha}, \frac{1}{2}g^{kj}\partial_{t}g_{\beta j}\pd{}{y^k},\right)\\
        &= \pd{}{t} h_{\alpha\beta} - \frac{h_{\alpha\alpha}+h_{\beta\beta}}{2}\partial_{t}g_{\alpha\beta} \\
        &= \pd{}{t} h_{\alpha\beta} - \varphi\partial_{t}g_{\alpha\beta}.
    \end{aligned}  \] 
    Therefore, \(\delta_{\alpha\beta}\pd{\varphi}{t} = \nabla_t h_{\alpha\beta}, 1\leq \alpha,\beta \leq m\).
\end{proof}

We will also exploit the following algebraic lemma. Note that functions of the form \((\sigma_k / \sigma_l)^{\frac{1}{k-l}}\) (for \(k > l \geq 0\)) satisfy the two conditions below.

\begin{lemma}\label{lem:algebraic}
    Let \(f(\lambda)\) be a \(C^2\)-function defined on a convex domain \(\Gamma \subset \R^n\) and \(e = (1,1,\cdots,1) \in \Gamma\). Let \(\Sigma \subset \mathbb{S}^{n-1}\) be a relatively compact subset of the unit sphere. Define \(Z(\lambda) := \sum_{i=1}^{n}\pd{f}{\lambda_i}(\lambda) - f(e)\). Suppose that \(f\) satisfies the following conditions: 
    \begin{enumerate}
        \item \(f\) is concave. Moreover, the Hessian matrix of \(f\) is negative-definite at any \(\lambda\in \Gamma\cap\{ e+t\cdot\Sigma \mid t > 0\}\), where \(\{ e+t\cdot\Sigma \mid t > 0\} := \{ e+t\cdot\sigma \mid t > 0, \sigma\in \Sigma\}\); 
        \item \(f\) is homogeneous of degree \(1\).
    \end{enumerate}
    Then, for any \(\eps > 0\), \(K_{\eps} \subset \Gamma\cap\{ e+t\cdot\Sigma \mid t\geq \eps\}\), we have 
    \[ \inf_{\lambda \in K_{\eps}}Z(\lambda) := \delta_{\eps} > 0. \]
\end{lemma}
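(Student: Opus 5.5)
The plan is to first show that \(Z\) is nonnegative on \(\Gamma\) and vanishes exactly along the ray through \(e\), and then extract a uniform positive lower bound on \(K_\eps\) from strict concavity in directions transverse to \(e\). By homogeneity of degree \(1\), Euler's identity gives \(f(\lambda)=\sum_i \lambda_i\, \partial_i f(\lambda)\). By concavity, for any \(\lambda,\mu\in\Gamma\),
\[ f(\mu)\le f(\lambda)+\sum_i \partial_i f(\lambda)(\mu_i-\lambda_i)=\sum_i \partial_i f(\lambda)\mu_i . \]
Taking \(\mu=e\) yields \(f(e)\le \sum_i\partial_i f(\lambda)\), so \(Z(\lambda)\ge 0\) everywhere on \(\Gamma\).

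Next we quantify the gap. Write \(\lambda=e+t\sigma\) with \(t\ge\eps\) and \(\sigma\in\Sigma\). Taylor's formula with integral remainder along the segment from \(\lambda\) to \(e\), which lies in \(\Gamma\) by convexity, gives
\[ Z(\lambda)=\int_0^1 (1-s)\,\bigl(-D^2f(\lambda+s(e-\lambda))\bigr)[\lambda-e,\lambda-e]\,\dd s . \]
Here \(\lambda-e=t\sigma\). We can exploit homogeneity to rescale: \(D^2f\) is homogeneous of degree \(-1\), and \(D f\) is homogeneous of degree \(0\), so \(Z\) is scale invariant. Hence \(Z(\lambda)=Z(\lambda/|\lambda|)\), and the problem reduces to a problem on the unit sphere. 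Under this normalization, \(\lambda\) ranges over directions bounded away from \(e/|e|\): as \(t\) ranges over \([\eps,\infty)\) and \(\sigma\) over \(\Sigma\), the direction of \(e+t\sigma\) stays at positive angle from \(e\) provided \(\sigma\) is not a positive multiple of \(e\). Such directions should be excluded by hypothesis, since the Hessian is required to be negative definite there and \(D^2 f(\lambda)\lambda=0\) by homogeneity. The same relation shows that definiteness is really meant on vectors transverse to \(\lambda\).

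The main obstacle is compactness. \(K_\eps\) may be unbounded, and its normalized image may approach \(\partial\Gamma\), where \(Z\) could degenerate. The plan is therefore to interpret \(K_\eps\) as a compact subset, which is how the lemma will be applied, with curvature pinched by the estimates in Proposition~\ref{prop:normalized-estimates}. Then \(Z\) is continuous on \(K_\eps\) and attains its infimum at some \(\lambda_0\). It remains to check that \(Z(\lambda_0)>0\). In the integral formula above, the integrand is \((1-s)\) times \(-D^2 f\) at the point \(\lambda_0+s(e-\lambda_0)\), evaluated on \(t\sigma\neq0\). For \(s\) near \(0\), this point lies in the region \(\{e+t'\sigma: t'>0\}\), where the Hessian is negative definite. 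Since \(\sigma\) is not parallel to that point, the integrand is strictly positive on a subinterval of \(s\). This gives \(Z(\lambda_0)>0\), hence \(\delta_\eps>0\). If compactness is unavailable, the fallback is the scale invariance above combined with relative compactness of \(\Sigma\) to pass to limits, though limits landing on \(\partial\Gamma\) would then need separate treatment.
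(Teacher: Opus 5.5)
There is a genuine gap. Your nonnegativity argument and your Taylor formula are correct, and you are right that hypothesis (1) can only mean strict concavity in directions transverse to \(\lambda\), since \(D^2f(\lambda)\lambda=0\). The problem is that your argument only closes under an extra assumption that \(K_\eps\) is compact, which the statement does not make. Your justification that this is ``how the lemma will be applied'' is also wrong. In the proof of Lemma~\ref{lem:estimate-on-eigenvalues}, the set \(K=\{\lambda\in S\mid g(\lambda)\le\frac12\}\) accumulates on \(\partial\Gamma_n\): points with \(\lambda_1=1\) and \(\lambda_n\to0\) have \(F_*\to0\), so they lie in \(K\). Nothing in Proposition~\ref{prop:normalized-estimates} bounds \(\lambda_n/\lambda_1\) from below at that stage; the lower bound on \(\lambda_n\) is derived only afterwards, from the bound on \(\lambda_1\). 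So the case you defer to a ``fallback'' (limits on \(\partial\Gamma\), or \(t\to\infty\)) is exactly the case that is needed. Rescaling along rays from the origin does nothing to control it.

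The missing idea is monotonicity of \(Z\) along rays emanating from \(e\), which is already contained in your integral formula. Put \(\lambda=e+t\sigma\) and substitute \(\tau=(1-s)t\) to get
\[ Z(e+t\sigma)=\int_0^t \tau\,\bigl(-D^2 f(e+\tau\sigma)\bigr)[\sigma,\sigma]\,\dd\tau . \]
Equivalently, using \(D^2f(\lambda)\lambda=0\),
\[ \frac{\dd}{\dd t}Z(e+t\sigma)=D^2f(e+t\sigma)[e,\sigma]=-t\,D^2f(e+t\sigma)[\sigma,\sigma]\ge0 . \]
Hence \(Z(e+t\sigma)\ge Z(e+\eps\sigma)\) for every \(t\ge\eps\). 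Convexity of \(\Gamma\) guarantees \(e+\eps\sigma\in\Gamma\) whenever \(e+t\sigma\in\Gamma\). Therefore \(\inf_{K_\eps}Z\ge\inf_{\sigma\in\Sigma}Z(e+\eps\sigma)\), an infimum over a fixed slice at distance \(\eps\) from \(e\). That infimum is controlled by compactness in \(\sigma\) alone, together with strict concavity in the direction \(\sigma\). This is the paper's proof: it defines \(\mathcal{E}_\sigma(t)=f(e+t\sigma)-t\ip{\sigma}{Df(e+t\sigma)}-f(e)\), shows it is increasing in \(t\), and identifies it with \(Z(e+t\sigma)\) via Euler's identity. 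Compactness of \(K_\eps\), and the behaviour of \(Z\) near \(\partial\Gamma\) or at infinity, never enter.
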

\begin{proof}
    Fix any \(\sigma\in \Sigma\). Consider 
    \[ \mathcal{E}_{\sigma}(t) := f(e+t\sigma) - t\ip{\sigma}{D f(e+t\sigma)} - f(e). \]
    Then direct calculations and the concavity condition yield
    \[ \begin{aligned}
        \frac{d}{dt}\mathcal{E}_{\sigma}(t) &= \ip{D f(e+t\sigma)}{\sigma} - \ip{\sigma}{Df(e+t\sigma)} - t\ip{\sigma}{D^2 f(e+t\sigma)\cdot \sigma} \\
        &= -t D^2 f(e+t\sigma)(\sigma,\sigma) > 0,\qquad \forall t>0.
    \end{aligned} \]
    This means \(\mathcal{E}_{\sigma}(t)\) is strictly increasing for \(t > 0\). Hence 
    \[ \mathcal{E}_{\sigma}(t) \geq \mathcal{E}_{\sigma}(\eps) > \mathcal{E}_{\sigma}(0) = 0,\qquad \forall t\geq \eps. \] 
    Now by the continuity and the compactness of \(\Sigma\), we obtain 
    \[ \inf_{\sigma\in \Sigma; t\geq \eps} \mathcal{E}_{\sigma}(t) \geq \delta_{\eps} > 0. \]
    On the other hand, for any \(\lambda\in K_{\eps}\), there exists \(t\geq \eps, \sigma\in \Sigma\) such that \(\lambda = e+t\sigma\). For such a choice, we have 
    \[ \begin{aligned}
        \delta_{\eps} \leq \mathcal{E}_{\sigma}(t) &= f(\lambda) - \ip{\lambda - e}{Df(\lambda)} - f(e) \\
        &= f(\lambda) - \sum_{i=1}^{n}\lambda_i\cdot \pd{f}{\lambda_i}(\lambda) + \sum_{i=1}^{n}\pd{f}{\lambda_i} - f(e) \\
        &= \sum_{i=1}^{n}\pd{f}{\lambda_i} - f(e) = Z(\lambda).
    \end{aligned}  \]
    Here we used the Euler's identity for homogeneous functions in the last equality. This completes the proof.
\end{proof}

Now we are ready to prove the crucial curvature estimate.
\begin{lemma}\label{lem:estimate-on-eigenvalues}
    There exists a large constant \(\alpha > 1\) (depending only on \(\sup F_*\)) such that the normalized flow \eqref{eq:normalized-flow} satisfies
    \[ \tilde{u}^\alpha \cdot \tilde{\lambda_1} \leq C, \quad  \forall y\in \H^n, t\geq 0, \]
    where \(C>0\) is a constant depending only on \(n,\alpha, \inf_{\H^n\times\{0\}}u, \sup_{\H^n\times\{0\}}u\), and \( \sup_{\H^n\times\{0\}}\abs{\bnabla u}\).
\end{lemma}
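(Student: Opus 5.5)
We apply the maximum principle to the test function $\varphi = \tilde u^\alpha \tilde\lambda_1$ on $\H^n\times[0,T')$; tildes are dropped below. By co-compactness (everything is $G$-invariant), for each $T'<T$ the maximum of $\varphi$ over $\H^n\times[0,T']$ is attained at some $(y_0,t_0)$. If $t_0=0$ there is nothing to prove, so assume $t_0>0$. Since $\lambda_1$ need not be smooth, we use Lemma \ref{lem:Brendle-lem} with $h=\tau$, $g=\bg$ (time independent, so $\nabla_t=\partial_t$), applied to the smooth barrier $\psi := \varphi(y_0,t_0)\,u^{-\alpha}$. This barrier satisfies $\psi\geq\lambda_1$ on the relevant region, with equality at $(y_0,t_0)$. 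We work in normal coordinates at $y_0$ with $\tau_{ij}=\lambda_i\delta_{ij}$. The lemma then gives $\partial_t\psi=\partial_t\tau_{11}$, $\bnabla_i\psi=\bnabla_i\tau_{11}$ and $\bnabla_i\bnabla_i\psi\ge\bnabla_i\bnabla_i\tau_{11}+2\sum_{p>m}\tau_{1pi}^2/(\lambda_1-\lambda_p)$. Hence at $(y_0,t_0)$ we have $0\le \mathscr L \log(u^\alpha\tau_{11})$ in the viscosity sense, with the extra good term available.

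Next we compute the terms. From the normalized equation for $\tau_{ij}$,
\[
\frac{\mathscr L\tau_{11}}{\tau_{11}} \le \frac{F_*^{kl,mn}\bnabla_1\tau_{kl}\bnabla_1\tau_{mn}+ F_*^{kk}\tau_{11k}^2/\lambda_1}{\lambda_1} - \frac{2F_*}{\lambda_1} + \dot F_*(\bg)+1 .
\]
Here the $\tau_{11k}^2$ term comes from $-\dot F_*^{kl}\bnabla_k\bnabla_l\log\tau_{11}$. From Lemma \ref{lem:normalized-equation-for-u},
\[
\alpha\frac{\mathscr L u}{u} = -\alpha Z + \alpha\frac{F_*^{kk}u_k^2}{u^2}.
\]
The gradient condition $\bnabla\log\varphi=0$ gives $\tau_{11k}/\lambda_1=-\alpha u_k/u$. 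The critical piece is therefore the combination
\[
\frac{1}{\lambda_1}\Bigl(F_*^{kl,mn}\tau_{kl1}\tau_{mn1}+2\sum_{p>m}F_*^{pp}\frac{\tau_{1p1}^2}{\lambda_1-\lambda_p}\Bigr)+\frac{F_*^{kk}\tau_{11k}^2}{\lambda_1^2}+\alpha\frac{F_*^{kk}u_k^2}{u^2}.
\]
For this we use the concavity-type inequality for $F_*=(s_n/s_{n-k})^{1/k}$, which is concave in $\tau$; the second-derivative term is nonpositive. Via the gradient identity the remaining pieces give $(\alpha^2+\alpha)F_*^{kk}u_k^2/u^2\le(\alpha^2+\alpha)\dot F_*(\bg)$, using $|\bnabla u|^2\le u^2$ from Proposition \ref{prop:normalized-estimates}. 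This positive term of order $\alpha^2\dot F_*(\bg)$ is dangerous. To absorb it, we plan to exploit inverse-concavity: the dual $F_*^{-1}$ is convex in $\tau^{-1}$, so the second-derivative term combined with $2F_*^{-1}(\dot F_*^{kl}\tau_{kl1})^2$ controls $-\sum F_*^{kk}\tau_{kk1}^2/\lambda_k$ roughly. This yields a term of the form $-F_*^{11}\tau_{111}^2/\lambda_1^2$, which cancels part of the $k=1$ gradient contribution.

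Finally we collect the zeroth-order terms. At $(y_0,t_0)$ we obtain
\[
0\le -\alpha Z + \dot F_*(\bg)+1 - \frac{2F_*}{\lambda_1} + (\text{gradient terms}).
\]
Writing $Z=\dot F_*(\bg)-1$, the main part is $-(\alpha-1)\dot F_*(\bg)+(\alpha+1)$. If $\lambda_1$ is huge relative to $u^{-\alpha}$, the normalized radii vector is far from $e$ (recall $F_*\le C_2$, so large $\lambda_1$ forces the other $\lambda_i$ to be small). Then $\dot F_*(\bg)$ is large: $F_*^{ii}$ for small $\lambda_i$ blows up, with $\dot F_*(\bg)\gtrsim (F_*/\lambda_n)^{\cdot}$. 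Lemma \ref{lem:algebraic} also gives $Z\ge\delta_\eps>0$ uniformly away from umbilic points. Choosing $\alpha$ large, depending only on $\sup F_*$ through these bounds, the negative multiple of $\alpha\dot F_*(\bg)$ dominates $(\alpha+1)$. This gives a contradiction unless $\lambda_1\le C$, whence $\varphi\le C$, using $C_1\le u\le C_2$.

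The main obstacle is the gradient terms. In particular, handling the $k\neq1$ components of $\alpha^2F_*^{kk}u_k^2/u^2$ is difficult: we cannot simply bound them by $\alpha^2\dot F_*(\bg)$, since that would overwhelm the $-\alpha\dot F_*(\bg)$ term. We would first try to use the Brendle term $2\sum_{p>m}F_*^{pp}\tau_{1p1}^2/(\lambda_1-\lambda_p)$ together with Codazzi ($\tau_{1p1}=\tau_{11p}$) to control them. When $\lambda_1\gg\lambda_p$, this term is about $2F_*^{pp}\tau_{11p}^2/\lambda_1$, which beats $F_*^{pp}\tau_{11p}^2/\lambda_1^2\cdot\lambda_1$; the resulting $-F_*^{pp}\tau_{11p}^2/\lambda_1^2$ then absorbs the $\alpha^2u_p^2/u^2$ contributions exactly. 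If this fails, we would replace $u^\alpha$ by $e^{\alpha u}$ or add a term like $\beta|\bnabla u|^2$ to generate extra good terms.
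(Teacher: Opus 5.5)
\textbf{There is a genuine gap: the gradient terms are never controlled.} Your setup agrees with the paper's: the test function $\alpha\log u+\log\lambda_1$, Lemma \ref{lem:Brendle-lem} at an interior maximum, and the zeroth-order part $-(\alpha-1)Z+2$ with $Z\ge\delta$ from Lemma \ref{lem:algebraic}. But the argument does not close. You yourself flag the gradient terms as the obstacle, and both tools you propose for them are mis-specified.

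For the directions $p>m$, the quantity $2\sum_{p>m}F_*^{pp}\tau_{1p1}^2/(\lambda_1-\lambda_p)$ is \emph{not} the Brendle term. Contracting the second inequality of Lemma \ref{lem:Brendle-lem} with $F_*^{ii}$ gives $2\sum_i\sum_{p>m}F_*^{ii}\tau_{1pi}^2/(\lambda_1-\lambda_p)$. The only part containing $\tau_{1p1}=\tau_{11p}$ is $i=1$, and it carries the weight $F_*^{11}$. Since $F_*^{11}\le F_*^{pp}$, and $F_*^{pp}$ can be much larger when $\lambda_p\ll\lambda_1$, this alone cannot absorb $(\alpha^2+\alpha)F_*^{pp}u_p^2/u^2$. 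The missing weight comes from the term you discarded as ``nonpositive''. Concavity actually gives the sharper bound
\[
\ddot F_*(\bnabla_1\tau,\bnabla_1\tau)\le-2\sum_{p>1}\frac{F_*^{pp}-F_*^{11}}{\lambda_1-\lambda_p}\,\tau_{1p1}^2 .
\]
Add this to the $i=1$ Brendle term; both enter $\mathscr{L}\phi$ divided by $\lambda_1$. The sum is $-2\sum_{p>m}F_*^{pp}\tau_{11p}^2/\bigl(\lambda_1(\lambda_1-\lambda_p)\bigr)\le-2\sum_{p>m}F_*^{pp}\tau_{11p}^2/\lambda_1^2=-2\alpha^2\sum_{p>m}F_*^{pp}u_p^2/u^2$. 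This beats the $p>m$ gradient terms once $\alpha\ge1$. For $1<\beta\le m$, the first-order part of Lemma \ref{lem:Brendle-lem} together with Codazzi gives $\tau_{11\beta}=0$, hence $u_\beta=0$.

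For the remaining direction $1$, inverse-concavity points the wrong way. Andrews' inequality for inverse-concave $F_*$ is a \emph{lower} bound on $\ddot F_*$. But $\ddot F_*$ enters $\mathscr{L}\tau_{11}$ with a plus sign, so only upper bounds help. (Also, $F_*^{-1}(\tau)=F(\tau^{-1})$ is concave, not convex, in $\tau^{-1}$.) Even a cancellation of $\alpha^2F_*^{11}u_1^2/u^2$ would leave $\alpha F_*^{11}u_1^2/u^2$ unaccounted for.

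The idea you are missing is elementary. By Euler's identity, $\lambda_1F_*^{11}\le\sum_i\lambda_iF_*^{ii}=F_*$. Combined with $\abs{\bnabla u}\le u$, the whole direction-$1$ contribution satisfies $(\alpha^2+\alpha)F_*^{11}u_1^2/u^2\le\alpha(\alpha+1)\sup F_*/\lambda_1$. This is small for large $\lambda_1$. So at the maximum, $0\le\alpha(\alpha+1)\sup F_*/\lambda_1-\bigl((\alpha-1)\delta-2\bigr)$ whenever $\lambda_1\ge2\sup F_*$, which bounds $\lambda_1$ once $\alpha>1+2/\delta$.

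Finally, your claim that $\dot F_*(\bg)$ becomes large as $\lambda_1\to\infty$ is false in general. For $n=2$, $k=1$ one has $\dot F_*(\bg)=2(\lambda_1^2+\lambda_2^2)/(\lambda_1+\lambda_2)^2\to2$ along $\lambda=(L,1)$. Only $Z\ge\delta$ is available, and that is all the zeroth-order part needs.
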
 
\begin{proof} 
    Throughout this proof, we may drop the tildes for simpler notations. Consider the test function \[ \phi = \alpha \log u + \log \lambda_1. \] 
    At the maximum point \((y_0,t_0)\) of \(\phi\) in \(\H^n \times [0,T]\), suppose that \(t_0 > 0\). Choosing a normal coordinate around \(y_0\), by Lemma \ref{lem:Brendle-lem}, we have at \((y_0,t_0)\),
    \[ 0 = \phi_i = \alpha\frac{u_i}{u} + \frac{(\lambda_1)_i}{\lambda_1},  \]
    \[ \begin{aligned}
        \phi_{ii} &= \alpha\frac{u_{ii}}{u} - \alpha\frac{u_i^2}{u^2} + \frac{(\lambda_1)_{ii}}{\lambda_1} - \frac{(\lambda_1)_i^2}{\lambda_1^2} \\
        &\geq \alpha\frac{u_{ii}}{u} - \alpha\frac{u_i^2}{u^2} + \frac{1}{\lambda_1}\left( \tau_{11ii} + 2\sum_{p>m}\frac{\tau_{1pi}^2}{\lambda_1 - \lambda_p} \right) - \frac{\tau_{11i}^2}{\lambda_1^2}.
    \end{aligned} \]
    Therefore, we get 
    \[ \begin{aligned}
            0 &\leq \mathscr{L}\phi \leq \frac{\mathscr{L}\tau_{11}}{\lambda_1} + \frac{\alpha\mathscr{L}u}{u} + \alpha\sum_i \frac{F_*^{ii}u_i^2}{u^2} - 2\sum_i \sum_{p>m}\frac{F_*^{ii}\tau_{1pi}^2}{\lambda_1(\lambda_1 - \lambda_p)} + \sum_i \frac{F_*^{ii}\tau_{11i}^2}{\lambda_1^2} \\
            &= \frac{\ddot{F_*}(\bnabla_1 \tau, \bnabla_1 \tau)}{\lambda_1} - \frac{2F_*}{\lambda_1} + (Z+2) - \alpha Z  \\
            &\quad + \alpha \sum_i \frac{F_*^{ii}u_i^2}{u^2} - 2\sum_i\sum_{p>m}\frac{F_*^{ii}\tau_{1pi}^2}{\lambda_1(\lambda_1 - \lambda_p)} + \sum_i \frac{F_*^{ii}\tau_{11i}^2}{\lambda_1^2}. 
    \end{aligned} \]
    Since, by concavity of $F^*$, we have  
    \[ \begin{aligned}
        \ddot{F_*}(\bnabla_1 \tau, \bnabla_1 \tau) &= \sum_{i,j}{F_*^{ii,jj}\tau_{ii1}\tau_{jj1}} - 2\sum_{p>q}\frac{F_*^{pp} - F_*^{qq}}{\lambda_q - \lambda_p}\tau_{1pq}^2 \leq  - 2\sum_{p>1}\frac{F_*^{pp} - F_*^{11}}{\lambda_1 - \lambda_p}\tau_{1p1}^2 ,
    \end{aligned} \]
    and 
    \[ - 2\sum_i\sum_{p>m}\frac{F_*^{ii}\tau_{1pi}^2}{(\lambda_1 - \lambda_p)} \leq -2\sum_{p>m}\frac{F_*^{11}\tau_{1p1}^2}{\lambda_1 - \lambda_p}, \]
    and \( \tau_{1p1} = 0\) for \(1 < p \leq m\) (by \eqref{eq:Brendle-lem}), we have 
    \[ \begin{aligned}
            0 &\leq -2\sum_{p>m}\frac{F_*^{pp}\tau_{11p}^2}{\lambda_1(\lambda_1 - \lambda_p)} - \frac{2F_*}{\lambda_1} + (Z+2) - \alpha Z  \\
            &\quad + \alpha \sum_i \frac{F_*^{ii}u_i^2}{u^2}  + \sum_i \frac{F_*^{ii}\tau_{11i}^2}{\lambda_1^2} \\
            &\leq -\sum_{p>m}\frac{F_*^{pp}\tau_{11p}^2}{\lambda_1(\lambda_1 - \lambda_p)} - \frac{2F_*}{\lambda_1} + (Z+2) - \alpha Z  \\
            &\quad + \alpha \sum_i \frac{F_*^{ii}u_i^2}{u^2} + \frac{F_*^{11}\tau_{111}^2}{\lambda_1^2}.
    \end{aligned} \]
    By the first order condition \( 0 = \phi_i\), we have \(\frac{u_i^2}{u^2} = \frac{\tau_{11i}^2}{\alpha^2 \lambda_1^2}\), then 
    \[ \begin{aligned}
            0 &\leq  (\alpha + \alpha^2) \frac{F_*^{11}u_1^2}{u^2} -\frac{2F_*}{\lambda_1} + (Z+2)-\alpha Z.
    \end{aligned} \]
    By Euler's identity \(\sum_{i=1}^{n}F_*^{ii}\lambda_i = F_*\) we have \(F_*^{11} \leq \frac{F_*}{\lambda_1}\), then
    \[ 0 \leq \alpha(\alpha+1)\frac{F_*|\bnabla u|^2}{\lambda_1 u^2} - ((\alpha-1)Z - 2). \]

    Now we show that \(Z \geq \delta >0\) when \(\lambda_1\) is sufficiently large. Note that \(F_*\) has a uniform upper bound by Proposition ~\ref{prop:normalized-estimates}. Assume that \(\lambda_1 \geq 2\sup F_*\), i.e. \(\lambda\) lies in the set
    \[ E = \{\lambda \in \Gamma_n \mid \lambda_1 \geq 2 \sup F_*, \quad\lambda_1\geq \lambda_2\geq \cdots \geq \lambda_n\}. \]
    Since both \(F_*\) and \(\lambda_1\) are homogeneous of degree \(1\), the functions \(g(\lambda) := F_*(\lambda)/\lambda_1\) and \(Z(\lambda)\) are homogeneous of degree \(0\). This allows us to restrict our analysis to the cross-section 
    \[ S = \{\lambda \in \Gamma_n \mid \lambda_1 = 1,\quad \lambda_1\geq \lambda_2\geq \cdots \geq \lambda_n\}.\] 
    Let \(e = (1,1,\cdots,1)\) be the umbilical point. Consider the sublevel set 
    \[ K = \{ \lambda \in S \mid g(\lambda) \leq \frac{1}{2} \}. \]
    Since \(g(e) = 1\), we have \(e\notin K\). Then by the continuity of \(g(\lambda)\), there exist a neighborhood \(B_{\eps}\ni e\) such that \(g\geq 3/4\) in \(B_{\eps}\). Now applying Lemma \ref{lem:algebraic} on \(f_*(\lambda) = \left(\frac{s_n}{s_{n-k}}\right)^{1/k}(\lambda)\) (with \(\Sigma = \{\sigma=(0,\sigma_2,\cdots,\sigma_n)\mid \sum_{i=2}^{n}\sigma_i^2 = 1\}\) there), we conclude that 
    \[ \delta := \inf_{\bar{\lambda} \in K} Z(\bar{\lambda}) > 0. \] 
    Clearly, for any \(\lambda\in E\), we have \(\bar{\lambda}: = \lambda / \lambda_1 \in K\). Then \(Z(\lambda) = Z(\bar{\lambda}) \geq \delta > 0\). 
    In conclusion, with the assumption \(\lambda_1 \geq 2\sup F_*\), we obtain \(Z\geq \delta > 0\). Then
    \[ 0 \leq \alpha(\alpha+1)\frac{F_*|\bnabla u|^2}{\lambda_1 u^2} - ((\alpha-1)\delta - 2). \]
    Take \(\alpha > 1 + \frac{2}{\delta}\), then 
    \[ \lambda_1 \leq \frac{C\alpha(1+\alpha)}{(\alpha-1)\delta - 2}. \]
    Therefore, 
    \[ \lambda_1 \leq \max \left\{ 2\sup F_*, \frac{C\alpha(1+\alpha)}{(\alpha-1)\delta - 2} \right\}. \]
    This finishes the proof.
\end{proof}

\begin{remark}
   Unlike \cite{WX20}, where the auxiliary functions is $x_{n+1}$, we deliberately utilize the normalized support function $\tilde{u}$ in Lemma \ref{lem:estimate-on-eigenvalues}. This is because $x_{n+1}$ is not $G$-invariant, while $\tilde{u}$ naturally preserves $G$-invariance. 
\end{remark}

Now we conclude the long-time existence of the unnormalized flow. 
\begin{theorem}\label{thm:long-time}
    If the initial spacelike hypersurface \(X_0\) is standard co-compact (Definition \ref{defn:std co-compact}) and strictly convex. Then the inverse \(\sigma_k\) curvature flow \eqref{eq:ICF} admits a long-time solution \(X: M\times [0,+\infty) \to \R^{n,1}\). Moreover, each \(X_t := X(\cdot, t)\) is standard co-compact.
\end{theorem}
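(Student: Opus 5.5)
The plan is a standard continuation argument for the support function \(u(z,t)\) on \(\H^n\), run through the normalized flow and converted back with \(\tilde u = e^t u\).

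\textbf{Setup.} Let \(T^*\) be the maximal existence time of the short-time solution given by the short-time existence proposition. The solution is \(G\)-invariant, since the equation commutes with the \(G\)-action. Hence it descends to a flow on the compact quotient \(\H^n/G\), and every quantity below attains its extrema. Suppose, for contradiction, that \(T^*<\infty\). Working with the normalized flow \eqref{eq:normalized-flow}, I will derive uniform estimates on \([0,T^*)\) in \(C^{2,\beta}\), then in all \(C^k\), for \(\tilde u\) on \(\H^n/G\). Then \(\tilde u(\cdot,t)\) converges as \(t\to T^*\) to a strictly convex \(G\)-invariant limit. Restarting the flow there by short-time existence contradicts maximality.

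\textbf{Step 1: \(C^0\), \(C^1\) and upper curvature bounds.} Proposition \ref{prop:normalized-estimates} gives \(C_1\le\tilde u\le C_2\), \(C_1\le \tilde F_*\le C_2\) and \(|\bnabla\tilde u|\le \tilde u\), all independent of \(T^*\). Lemma \ref{lem:estimate-on-eigenvalues} gives \(\tilde u^\alpha\tilde\lambda_1\le C\). Combined with \(\tilde u\ge C_1\), this yields \(\tilde\lambda_1\le C\), i.e. \(\tilde\tau_{ij}\le C\bg_{ij}\).

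\textbf{Step 2: lower curvature bound.} This is the key point for parabolicity, and it follows algebraically. On \(\Gamma_n\), \(f_*=(s_n/s_{n-k})^{1/k}\) vanishes when \(\lambda_n\to0\) with the other \(\lambda_i\) bounded. Quantitatively, \(s_n/s_{n-k}\le C\lambda_n\lambda_1^{k-1}\), because every term of \(s_{n-k}\) missing the index \(n\) contributes a factor \(\lambda_n\cdot(\text{product})\) to \(s_n\). So \(\tilde F_*\ge C_1\) together with \(\tilde\lambda_1\le C\) forces \(\tilde\lambda_n\ge c>0\). Therefore \(\tilde\tau\) stays in a compact subset of the positive cone. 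This gives \(c\,\bg\le\tilde\tau\le C\bg\), a uniform \(C^2\) bound for \(\tilde u\), and uniform parabolicity of \(\dot F_*^{kl}\).

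\textbf{Step 3: higher regularity and continuation.} The operator \(F_*\) is concave in \(\tau\): \(f_*\) is the dual of the concave inverse-concave function \(s_k^{1/k}\), and \((s_n/s_{n-k})^{1/k}\) is concave on \(\Gamma_n\). The equation \(\partial_t\tilde u=-F_*(\tilde u\bg-\bnabla^2\tilde u)+\tilde u\) is therefore a uniformly parabolic, concave, fully nonlinear equation on the compact manifold \(\H^n/G\). Krylov--Safonov and Evans--Krylov give \(C^{2,\beta}\) estimates, and Schauder bootstrapping gives all \(C^k\) bounds, uniform on \([0,T^*)\). We can then extend past \(T^*\), which is the contradiction, so \(T^*=\infty\).

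\textbf{Step 4: back to \(X\).} Undo the scaling, \(u=e^{-t}\tilde u\). Recover \(X=uz-z_*(\bnabla u)\) from the support function. Positivity of \(\tau\) keeps each \(X_t\) spacelike, strictly convex and \(G\)-invariant with compact quotient, so each \(X_t\) is standard co-compact.

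The main obstacle is already handled by Lemma \ref{lem:estimate-on-eigenvalues}: the upper bound on the principal radii. What remains to check with care is Step 2, the lower bound \(\tilde\lambda_n\ge c\) via the speed lower bound, and the concavity of \(F_*\) needed for Evans--Krylov. I would verify the concavity using the known fact that \((\sigma_n/\sigma_{n-k})^{1/k}\) is concave on \(\Gamma_n\).
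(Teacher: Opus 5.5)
Your proposal is correct and follows the paper's argument. The upper bound on $\lambda_1$ comes from Lemma \ref{lem:estimate-on-eigenvalues}: the paper converts it to $\lambda_1\le Ce^{-t}$ for the unnormalized flow via $\tilde\tau_{ij}=e^t\tau_{ij}$, while you stay in normalized variables. The lower bound on $\lambda_n$ comes from the speed lower bound together with $s_n/s_{n-k}\le c(n,k)\,\lambda_1^{k-1}\lambda_n$, exactly as in the paper. Beyond that, you spell out the Evans--Krylov and continuation step that the paper leaves implicit. One small correction there: the operator $-F_*(\tilde u\bg-\bnabla^2\tilde u)$ is convex rather than concave in $\bnabla^2\tilde u$, which Evans--Krylov handles equally well after replacing $\tilde u$ by $-\tilde u$.
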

\begin{proof}
    By the scaling relation $\tilde{\tau}_{ij} = e^t \tau_{ij}$, the uniform bound $\tilde{\lambda}_1 \leq C$ established in Lemma \ref{lem:estimate-on-eigenvalues} immediately implies an exponential decay $\lambda_1 \leq C e^{-t}$ for the unnormalized flow. Combining this upper bound with the algebraic fact that 
    \[ Ce^{-kt} \leq F^k_*(\tau) = \frac{s_n}{s_{n-k}} \leq c(n,k)\cdot \frac{\lambda_1\cdots\lambda_n}{\lambda_1 \cdots \lambda_{n-k}} \leq c(n,k)\cdot \lambda_1^{k-1}\lambda_n \]
    we obtain \(\lambda_n \geq \tilde{C} e^{-t}\). 
    Therefore the principal curvatures are bounded strictly away from $0$ and $+\infty$ on any finite time interval $[0, T]$. This excludes the development of finite-time singularities and completes the proof. 
\end{proof}

In the end of this section, we make some preparation for the proof of the convergence of the normalized flow. In general, for a Banach space \(X\), \(m\in \N\), we denote by \(C^{m}(I,X)\) the space consisting of all \(C^{m}\)-maps \(u\) from the interval \(I \subset \mathbb{R}\) to \(X\) with finite norm, where the norm of \(C^{m}(I,X)\) is defined by 
\begin{equation}
    \|u\|_{C^{m}(I,X)} := \sup_{t\in I ; 0\leq l \leq m}\|\partial_t^{l}u(t)\|_{X}.
\end{equation} 
In particular, for \(X = C^{k}(\H^n / G)\), \(I = [0,T]\) and \(u: (\H^n / G)\times [0,T] \to \R\), the norm is given by
\begin{equation}
    \|u\|_{C^{m}\left([0,T],C^{k}(\H^n / G)\right)} := \sup_{t\in[0,T] ; 0\leq l \leq m}\|\partial_t^{l}u(\cdot,t)\|_{C^{k}(\H^n / G)}.
\end{equation}
Similarly, for \(X = C^{k,\alpha}(\H^n / G)\) (\(0 < \alpha < 1\)) and \(u: (\H^n / G)\times [0,T] \to \R\),
\begin{equation}
    \|u\|_{C^{m}\left([0,T],C^{k,\alpha}(\H^n / G)\right)} := \sup_{t\in[0,T] ; 0\leq l \leq m}\|\partial_t^{l}u(\cdot,t)\|_{C^{k,\alpha}(\H^n / G)}.
\end{equation}
\par Having established the global existence of the flow, we now proceed to obtain higher-order estimates. Combining all the estimates above (Proposition ~\ref{prop:normalized-estimates} and Lemma ~\ref{lem:estimate-on-eigenvalues}), by Evans-Krylov theory, for any \(T > 0\), the normalized support function \(u(y,t)\) satisfies
\[ \sup_{t\in(0,T)}\|u(\cdot,t)\|_{C^{2,\alpha}(\H^n / G)} \leq C, \] 
where \(C>0\) is a uniform constant independent of \(T\). Then by \eqref{eq:normalized-flow} we have 
\begin{equation}
    \|u\|_{C^{1}\left([0,T],C^{2,\alpha}(\H^n / G)\right)} \leq C.
\end{equation}
It follows a priori estimates for higher-order derivatives immediately. 
\begin{corollary}\label{cor:higer-order estimates}
    Let \(u(y,t)\) satisfies the normalized flow \eqref{eq:normalized-flow} with \(u(\cdot,0) = u_0(\cdot)\), then for any integer \(k\geq 2\), any \(T > 0\), we have
    \begin{equation}
        \|u\|_{C^{1}([0,T],C^{k}(\H^n / G))} \leq C,
    \end{equation}
    where \(C > 0\) is a constant depending on \(n,k,\|u_0\|_{C^{2}(\H^n / G)}\), and independent of \(T\).
\end{corollary}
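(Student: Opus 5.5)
The plan is to bootstrap from the uniform $C^{2,\alpha}$ bound on the normalized support function, and the resulting $C^1([0,T],C^{2,\alpha})$ bound, by differentiating the normalized flow \eqref{eq:normalized-flow} in space and applying linear parabolic Schauder theory on the compact quotient $\H^n/G$. The key point is that the constants must not depend on $T$. This holds because the problem is posed on a fixed compact manifold $\H^n/G$ with the fixed metric $\bg$, because $u$ and $\tau$ stay in a compact region of the admissible set uniformly in time, and because we may use interior-in-time Schauder estimates on unit time windows $[t_0-1,t_0+1]$. The initial segment $[0,1]$ is handled by the estimates up to $t=0$, which depend on $u_0$.

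\textbf{Step 1: uniform ellipticity.} By Proposition~\ref{prop:normalized-estimates} and Lemma~\ref{lem:estimate-on-eigenvalues}, $\tilde\lambda_1\le C$. Together with $\tilde F_*\ge C_1$ and the inequality $F_*^k\le c\lambda_1^{k-1}\lambda_n$ from the proof of Theorem~\ref{thm:long-time}, this gives $\tilde\lambda_n\ge c>0$. So $\tilde\tau$ ranges in a compact subset of $\Gamma_n$, on which $F_*$ is smooth and concave with $c\,\bg\le \dot F_*^{kl}\le C\,\bg$. This justifies the Evans--Krylov step already quoted and yields the $C^{2,\alpha}$ bound in space. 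By the equation, it also yields a $C^{\alpha/2}$ bound in time for $u$, $\bnabla^2u$, and hence for the coefficients $\dot F_*^{kl}(\tau)$.

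\textbf{Step 2: differentiate and apply Schauder.} Let $v=\bnabla_p u$, working in local coordinates on $\H^n$ and projecting to the quotient via $G$-invariance. Differentiating \eqref{eq:normalized-flow} gives a linear equation
\[
\partial_t v=\dot F_*^{kl}\bnabla_k\bnabla_l v+(\text{lower order terms in }v,\ \bnabla v),
\]
where the commutator terms from $\bar R$ are explicit, since $\H^n$ has constant curvature. The coefficients are $C^{\alpha,\alpha/2}$ with $T$-independent bounds. Parabolic Schauder estimates on windows $Q=\H^n/G\times[t_0-1,t_0+1]$ bound $\|v\|_{C^{2+\alpha,1+\alpha/2}}$ on the inner half-window by $\sup|v|$, and $\sup|v|$ is controlled by Proposition~\ref{prop:normalized-estimates}. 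Thus $u\in C^{3,\alpha}$ in space uniformly. Iterating, the coefficients gain one derivative at each stage and so does $u$, giving $\|u(\cdot,t)\|_{C^k}\le C_k$ for every $k$. The time derivative is then bounded via $\partial_t u=-F_*(\tau)+u$, which gives $\|\partial_tu\|_{C^{k}}\le C(\|u\|_{C^{k+2}})$. This is the $C^1([0,T],C^k)$ bound.

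\textbf{Main obstacle.} The delicate point is the $T$-independence, together with the start-up near $t=0$. On $[0,1]$ we will use Schauder estimates up to the initial time, which need compatibility and regularity of $u_0$. For this reason the constant honestly depends on higher norms of $u_0$ on that first window, unless one invokes the parabolic smoothing of the interior estimate for $t\ge 1$. I would state the dependence through $\|u_0\|_{C^{2}}$ for $t\ge1$ via interior estimates. For $t\in[0,1]$ I would use the smoothness of $X_0$, noting that all windows are translates of a fixed one, so the constants are identical for every $t_0\ge1$.
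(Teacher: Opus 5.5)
Your proposal is correct and follows the same route as the paper. The paper cites Evans--Krylov for a $T$-independent $C^{2,\alpha}$ bound, which rests on the two-sided bounds on $\tilde\tau$ that you derive explicitly in Step 1, and then simply asserts that the higher-order estimates follow by Schauder bootstrapping. Your unit-window interior estimates make the $T$-independence precise. Your caveat about $t\in[0,1]$ is a genuine correction to the stated dependence, not a weakness: at $t=0$ the quantity $\|u(\cdot,0)\|_{C^k}=\|u_0\|_{C^k}$, and likewise $\|\partial_t u(\cdot,0)\|_{C^k}$, which involves $k+2$ derivatives of $u_0$, cannot be controlled by $\|u_0\|_{C^2}$ alone. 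The claimed dependence therefore holds only for $t\ge 1$. One small point: no compatibility conditions are needed on $[0,1]$, since $\H^n/G$ has no boundary.
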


\section{Convergence}\label{sec:convergence}
In this section we will show the convergence of the normalized flow. By Lemma \ref{lem:normalized-equation-for-u}, the normalized evolution equation is 
\[ \mathscr{L}u = -Z u, \]
where \(Z = \dot{F_*}(\bg)-1 \geq 0 \), and \(\mathscr{L} = \partial_t - \dot{F}_*^{kl}\bnabla_k \bnabla_l \). Since we already have the uniform estimate on the coefficients of \(\mathscr{L}\), independent of \(T\), the following strong maximum principle (\cite{Lieberman1996} Theorem 2.7) holds: 
\begin{lemma}[Strong Maximum Principle]
  If \(w(x,t)\) satisfies 
    \[ \mathscr{L}w \leq c^i(x,t) \bnabla_i w, \]
    Then the supremum of \(w(x,t)\) is attained on \(M\times \{t=0\}\). Moreover, if \(w(x,t)\) attains its supremum in the interior of \(M \times (0,T)\) for some \(T > 0\), then \(w\) must be constant in \(M \times (0,T)\). 
\end{lemma}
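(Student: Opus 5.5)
The plan is to work on the compact quotient $\H^n/G$ rather than on $\H^n$. All quantities involved are $G$-invariant: $u$, $\tau$, $\dot F_*^{kl}$, and the functions $w$ to which we will apply the lemma. So $w$ descends to a function on the compact manifold $\H^n/G$ and suprema are attained. The only structural input I need is that $\mathscr{L}$ is uniformly parabolic with bounded coefficients on $(\H^n/G)\times[0,T]$. This follows from Proposition~\ref{prop:normalized-estimates} and Lemma~\ref{lem:estimate-on-eigenvalues}, together with the lower bound on $\lambda_n$ from the proof of Theorem~\ref{thm:long-time}: $\lambda$ stays in a compact subset of $\Gamma_n$, so $\dot F_*^{kl}$ has eigenvalues between two positive constants. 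I will also assume the drift $c^i$ is bounded, which is the case in all intended applications, where it is built from the flow quantities.

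\textbf{Weak part.} For $\eps>0$ I set $w_\eps = w-\eps t$, so that $\mathscr{L}w_\eps \le c^i\bnabla_i w_\eps - \eps$. Suppose the maximum of $w_\eps$ on $(\H^n/G)\times[0,T]$ were attained at a point $(y_0,t_0)$ with $t_0>0$. At that point $\bnabla w_\eps = 0$, $\bnabla^2 w_\eps\le 0$ and $\partial_t w_\eps\ge 0$. Since $\dot F_*^{kl}$ is positive definite, this gives $\mathscr{L}w_\eps\ge 0$, contradicting $\mathscr{L}w_\eps\le -\eps<0$ there. Hence $\sup w_\eps$ is attained at $t=0$, and letting $\eps\to0$ gives the first claim.

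\textbf{Strong part.} Suppose $M_0:=\sup w$ is attained at an interior point $(y_0,t_0)$ with $0<t_0<T$. Since $\H^n/G$ is compact and connected, it can be covered by finitely many coordinate charts. In each chart $\mathscr{L}w - c^i\bnabla_i w\le 0$ is a linear uniformly parabolic inequality with bounded measurable coefficients. The classical Nirenberg strong maximum principle, in the form of \cite{Lieberman1996} Theorem 2.7, then applies: $w\equiv M_0$ on every point reachable from $(y_0,t_0)$ by a path that is horizontal or moves backward in time inside the chart. I then propagate this along a chain of overlapping charts. The set $\{(y,t): t\le t_0,\ w(y,t)=M_0\}$ is closed by continuity, and it is relatively open in $(\H^n/G)\times(0,t_0]$ by the local result. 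By connectedness it follows that $w\equiv M_0$ on $(\H^n/G)\times(0,t_0]$.

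To reach the stated conclusion on all of $(0,T)$, I apply the weak part on $[t_0,T)$ with initial slice $w(\cdot,t_0)\equiv M_0$. This shows $w\le M_0$ there, and equality must be reached again at interior times. I then repeat the same argument from later maximum points.

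The main obstacle I expect is the local barrier step behind Nirenberg's principle, namely the parabolic Hopf lemma. This is exactly where the uniform ellipticity constants and the bound on $c^i$ are used. Rather than reprove it, I would verify these hypotheses carefully and then invoke Lieberman's theorem chartwise.
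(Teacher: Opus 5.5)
The weak maximum principle via \(w-\eps t\) and the chartwise appeal to \cite{Lieberman1996} Theorem 2.7 are fine. They match the paper, which simply cites that theorem on the compact quotient using the uniform coefficient bounds.

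\textbf{The gap is in your last paragraph.} After obtaining \(w\equiv M_0\) on \((\H^n/G)\times(0,t_0]\), you apply the weak principle on \([t_0,T)\) to get \(w\le M_0\). You then assert that ``equality must be reached again at interior times.'' Nothing forces this. In fact, forward constancy is false for subsolutions:
\begin{itemize}
\item Take \(w=-\psi(t)\) with \(\psi\) smooth, \(\psi\equiv 0\) for \(t\le t_0\) and \(\psi'>0\) for \(t>t_0\).
\item Then \(\bnabla w=0\) and \(\mathscr{L}w=-\psi'\le 0=c^i\bnabla_i w\).
\item The supremum \(0\) is attained at interior points, yet \(w\) is not constant on \(M\times(0,T)\).
\end{itemize}
So the ``moreover'' clause, read literally, cannot be proved. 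The strong maximum principle propagates only backward in time. What Lieberman's theorem gives is constancy on \(M\times(0,t_0]\), and that is all Lemma \ref{lem:logu-logF} uses.

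If you also want constancy for \(t\ge t_0\), you need \(w\) to satisfy the equation with equality, \(\mathscr{L}w=c^i\bnabla_i w\). The function in \eqref{eq:eq-for-w} does satisfy it with equality. In that case, apply your weak part to both \(w\) and \(-w\) on \([t_0,T)\), with initial slice \(\equiv M_0\). This gives \(w\equiv M_0\) there.

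\textbf{A smaller point.} You claim \(\{(y,t): t\le t_0,\ w=M_0\}\) is relatively open in \((\H^n/G)\times(0,t_0]\) ``by the local result.'' This does not follow at points \((y_1,t_1)\) with \(t_1<t_0\), because the local principle says nothing about times after \(t_1\). Argue in two steps instead:
\begin{itemize}
\item The set \(\{y : w(y,t_0)=M_0\}\) is open, by the local theorem applied at \((y_1,t_0)\). It is also closed, so it is all of the connected manifold \(\H^n/G\).
\item Then apply the local theorem in cylinders \(B\times(0,t_0]\) over coordinate balls \(B\) to propagate the constancy backward to all of \((\H^n/G)\times(0,t_0]\).
\end{itemize}
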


We first show that the right hand side \( -F_* + u\) of the equation \eqref{eq:normalized-flow} will converge to \(0\) as \(t\to +\infty\), by using a similar idea to \cite{Huang2021}. The key observation here is that \(u\cdot F_*^{-1}\) (or \(\log u - \log F_*\) equivalently) satisfies the above maximum principle. 
\begin{lemma}\label{lem:logu-logF}
    Let \(w = \log u - \log F_*\), then \(w \to \text{const}\), locally uniformly as \(t\to +\infty\). 
\end{lemma}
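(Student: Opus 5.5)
The plan is to derive a parabolic equation for \(w=\log u-\log F_*\) along the normalized flow \eqref{eq:normalized-flow} with no zeroth-order term. Then use monotonicity of its oscillation together with the \(T\)-independent higher-order bounds of Corollary \ref{cor:higer-order estimates} to show that the oscillation tends to zero.

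\emph{Step 1: the equation for \(w\).} By Lemma \ref{lem:normalized-equation-for-u} we have \(\mathscr{L}u=-Zu\), and the normalized speed equation gives \(\mathscr{L}F_*=-ZF_*\). For a positive solution of \(\mathscr{L}v=-Zv\),
\[ \mathscr{L}\log v = -Z + \dot F_*^{kl}\frac{\bnabla_k v\,\bnabla_l v}{v^2}. \]
Subtracting the two cases, the \(Z\) terms cancel and
\[ \mathscr{L}w = \dot F_*^{kl}\bigl(\bnabla_k\log u\,\bnabla_l\log u-\bnabla_k\log F_*\,\bnabla_l\log F_*\bigr) = \dot F_*^{kl}\,\bnabla_k w\,\bigl(\bnabla_l\log u+\bnabla_l\log F_*\bigr). \]
This has the form \(\mathscr{L}w=c^i\bnabla_i w\), with \(c^i\) uniformly bounded by Proposition \ref{prop:normalized-estimates} and Corollary \ref{cor:higer-order estimates}. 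It is the normalized analogue of the computation in Proposition \ref{prop:speed-estimate}. By the maximum principle, \(\max w(\cdot,t)\) is nonincreasing and \(\min w(\cdot,t)\) is nondecreasing, so \(\operatorname{osc}w(\cdot,t)\) decreases to some limit \(\theta\geq0\). Both extrema converge, and \(w\) is uniformly bounded.

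\emph{Step 2: \(\theta=0\).} I would argue by contradiction in the style of \cite{Huang2021}. By Corollary \ref{cor:higer-order estimates}, \(u\) is bounded in \(C^1([0,\infty),C^k(\H^n/G))\) for every \(k\). The uniform curvature pinching from Lemma \ref{lem:estimate-on-eigenvalues} and Theorem \ref{thm:long-time} keeps the operator uniformly parabolic. Consider the time translates \(u_j(\cdot,s)=u(\cdot,t_j+s)\) with \(t_j\to\infty\). By Arzelà–Ascoli a subsequence converges smoothly on the compact quotient \(\H^n/G\times[0,1]\) to a solution \(u_\infty\) of the same equation. The corresponding \(w_\infty\) solves a linear parabolic equation of the same type, and \(\max w_\infty(\cdot,s)=\lim\max w\) and \(\min w_\infty(\cdot,s)=\lim \min w\) are constant in \(s\). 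Hence \(w_\infty\) attains its supremum at an interior time. The strong maximum principle (the lemma above, from \cite{Lieberman1996}) then forces \(w_\infty\) to be constant, so \(\theta=\operatorname{osc}w_\infty=0\). Consequently \(w(\cdot,t)\) converges uniformly on \(\H^n/G\), hence locally uniformly on \(\H^n\) by \(G\)-invariance, to the common limit of \(\max w\) and \(\min w\).

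\emph{Main obstacle.} I expect the difficulty to lie in Step 2: the compactness of the time-translated family and the passage to the limit equation. In particular, I need the coefficients \(\dot F_*^{kl}\) and \(c^i\) to stay uniformly elliptic and bounded for all time. This requires the lower bound on \(\tilde\lambda_n\) in normalized variables, which I would obtain exactly as in the proof of Theorem \ref{thm:long-time} from \(\tilde F_*\geq C_1\) and \(\tilde\lambda_1\leq C\). Once that is in place, the limit \(w_\infty\) is smooth and the strong maximum principle applies directly.
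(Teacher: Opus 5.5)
Your proposal is correct and follows essentially the same route as the paper. You derive $\mathscr{L}w = c^i\bnabla_i w$ with the $Z$-terms cancelling, use the maximum principle to get a monotone oscillation, and rule out a positive limiting oscillation by taking time-translated limits via the $T$-independent higher-order estimates and applying the strong maximum principle. Tracking $\max w$ and $\min w$ separately, rather than only $\operatorname{osc} w$, is a small improvement: it makes the interior-maximum contradiction explicit and directly yields convergence of $w$ itself to a constant, a step the paper leaves implicit.
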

\begin{proof}
    Choose a local orthonormal frame \(\{e_1,\cdots,e_n\}\), 
    \[ w_i = \frac{u_i}{u} - \frac{(F_*)_i}{F_*}, \qquad w_t = \frac{u_t}{u} - \frac{(F_*)_t}{F_*}, \]
    \[ w_{ii} = \frac{u_{ii}}{u} - \frac{(u_i)^2}{u^2} - \frac{(F_*)_{ii}}{F_*} + \frac{(F_*)_i^2}{F_*^2}, \]
    \begin{equation}\label{eq:eq-for-w}
        \begin{aligned}
        \implies  \mathscr{L}w &= \frac{\mathscr{L}u}{u} -\frac{\mathscr{L}{F_*}}{F_*} + \sum_i \frac{F_*^{ii}u_i^2}{u^2} - \sum_i \frac{F_*^{ii}(F_*)_i^2}{F_*^2}\\
        &= -Z + Z + \sum_i F_*^{ii} \left( \frac{u_i}{u} +\frac{(F_*)_i}{F_*}\right) \left( \frac{u_i}{u} - \frac{(F_*)_i}{F_*} \right) \\
        &=: \sum_i c^i w_i.
        \end{aligned} 
    \end{equation}
    Thus by the strong maximum principle, \(w\) attains its supremum and infimum for \((\H^n / G) \times [t_0,+\infty)\) only on the boundary \((\H^n / G) \times \{t=t_0\}\). Then we can further conclude that 
    \[ osc(w)(t) := \max_{z\in \H^n}w(z,t) - \min_{z\in \H^n}w(z,t) \]
    is strictly decreasing. This monotonicity leads to a convergence as \(t\to +\infty\):
    \[ \lim_{t\to +\infty} osc(w)(t) = \epsilon \geq 0 \]
    If \(\epsilon = 0\), then the lemma has been proved. Assume not, that is, \(\epsilon > 0\). Fix \(T > 0\). For any sequence \(\{t_n\}\to +\infty (n\to \infty)\), define a sequence of functions,  
    \[ w_n(z,t) := w(z,t+t_n), \quad z\in \H^n, t\in (-t_n,+\infty). \]
    By Corollary~\ref{cor:higer-order estimates}, we have 
    \(\norm{w_n}_{C^1((-t_n,T), C^{2,\alpha}(\H^n / G))}\leq C\), where \(C>0\) is a uniform constant independent of \(T\). By the Arzelà-Ascoli theorem, there exists a subsequence of \(\{t_n\}\), again denoted by \(\{t_n\}\), and a limit function \(w_{\infty}^{T}(z,t)\) such that 
    \[ w_n(z,t) \to w_{\infty}^{T}(z,t), \quad \text{in }C^0\left([-T,T],C^{2,\beta}(\H^n / G)\right) \]
    for some \(0 < \beta < \alpha\). By a standard Cantor's diagonal argument, there is a subsequence of \(\{t_n\}\), again denoted by \(\{t_n\}\), and a limit function \(w_{\infty}(z,t)\) such that 
    \[ w_n(z,t) \to w_{\infty}(z,t), \quad \text{in }C_{loc}^{0}\left((-\infty,+\infty),C^{2,\beta}(\H^n / G)\right) \]
    for some \(0 < \beta < \alpha\). Here, \(w_n\) converges to \(w_{\infty}\) in \(C_{loc}^{0}\left((-\infty,+\infty),C^{2,\beta}(\H^n / G)\right)\) means that for any \(M > 0\), we have 
    \[ w_n(z,t) \to w_{\infty}(z,t), \quad \text{in }C^{0}\left([-M,M],C^{2,\beta}(\H^n / G)\right). \] 
    Then \(w_\infty(z,t)\) also satisfies equation \eqref{eq:eq-for-w}. Notice that \( osc(w_n)(t) = osc(w)(t+t_n)\), then 
    \[ osc(w_\infty)(t) = \lim_{n\to \infty}osc(w)(t+t_n) = \lim_{t\to +\infty}osc(w)(t) = \epsilon > 0, \quad \forall t\in (-\infty,+\infty). \]
    However, by the strong maximum principle, \(osc(w_\infty)(t)\) should be strictly decreasing until it reaches zero, which is a contradiction. 
\end{proof}

\begin{corollary}
    Let \(F_*\) and \(u\) be the speed function and support function of the rescaled solution \(\tilde{X}(p,t) = e^t X(p,t)\), then 
    \[ -F_* + u \to 0,\quad \text{locally uniformly as }t\to +\infty. \]
\end{corollary}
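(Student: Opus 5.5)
The plan is to deduce the statement directly from Lemma \ref{lem:logu-logF}, combined with the uniform bounds of Proposition \ref{prop:normalized-estimates}. Set $w = \log u - \log F_*$. By Lemma \ref{lem:logu-logF}, $w(\cdot,t) \to c$ uniformly on $\H^n/G$ for some constant $c$. Uniformity holds on the compact quotient because both $\max w$ and $\min w$ are monotone, and their difference tends to $0$. Since $-F_* + u = F_*(e^{w}-1)$ and $C_1 \le F_* \le C_2$, it is enough to show $c = 0$. Once that is known, $\abs{-F_*+u} \le C_2\abs{e^{w}-1} \to 0$ uniformly.

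To identify $c$, I will study the evolution of $\max_{\H^n} u$ and $\min_{\H^n} u$ under the normalized flow \eqref{eq:normalized-flow}, namely $\partial_t u = u - F_* = u\,(1 - e^{-w})$. Suppose $c > 0$. Then for $t \ge t_1$ we have $w \ge c/2$ everywhere, so $\partial_t u \ge (1-e^{-c/2})\,u$. This gives exponential growth of $\min u$, contradicting the uniform upper bound $u \le C_2$ from Proposition \ref{prop:normalized-estimates}. Suppose instead $c < 0$. Then $\partial_t u \le (1-e^{c/2})\,u$ with a negative coefficient, so $\max u$ decays exponentially to $0$, contradicting $u \ge C_1 > 0$. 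Hence $c = 0$.

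The comparison here is pointwise in $y$, so no maximum principle is needed: $u(y,\cdot)$ satisfies a scalar differential inequality for each fixed $y$. The one point I expect to need care with is the claim that the convergence in Lemma \ref{lem:logu-logF} is uniform, rather than only along subsequences. This follows from the monotone decrease of $\max w$ and the monotone increase of $\min w$, both given by the maximum principle applied to \eqref{eq:eq-for-w}. As a result, $\max w$ and $\min w$ converge, and their limits coincide because the oscillation tends to zero. This gives the convergence of $-F_*+u$ to $0$, uniformly and in particular locally uniformly.
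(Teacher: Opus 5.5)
Your proposal is correct and follows essentially the same route as the paper. You reduce to showing $u/F_*\to 1$ (equivalently $c=0$), then rule out any other limit by integrating $\partial_t u = u - F_*$ in time at each fixed point against the uniform bounds $C_1\le u\le C_2$. The paper obtains linear growth or decay where you obtain exponential, which makes no difference.

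One small slip: in the case $c<0$ the coefficient should be $1-e^{-c/2}<0$, not $1-e^{c/2}$. Your extra observation that $\max w$ is nonincreasing and $\min w$ is nondecreasing, so that $w$ converges uniformly to a single constant, usefully fills in a detail the paper leaves implicit.
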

\begin{proof}
    We first conclude that \(u F_*^{-1} \to 1\). If not, by Lemma \ref{lem:logu-logF}, \(w = \log u - \log F_* \to const\), suppose that \(u F_*^{-1} \to c\neq 1\). Then \(u_t = -F_* + u = F_*(uF_*^{-1} - 1) \to F_*(c-1) \) as \(t \to +\infty\). Note that \(0 < C_1 \leq F_* \leq C_2\), if \(c > 1\), then \(\exists T_1 > 0\) such that \(u_t \geq \frac{C_1 (c-1)}{2} > 0 \) when \(t > T_1\). This implies \(u \to +\infty\) as \(t\to +\infty\), which contradicts with \(u \leq C\). Similarly, \(c < 1\) will leads to \(u \to -\infty\), which is also a contradiction. Therefore, \(u F_*^{-1} \to 1\). By the uniform bounds for \(F_*\), we conclude that \(-F_* + u = F_*(uF_*^{-1} - 1) \to 0\).
\end{proof}

Now we can prove the convergence of the flow. \par
\textbf{Proof of Theorem }{\ref{thm:convergence}}: 
We first show that the volume of the rescaled hypersurface converges. In fact, the normalized ICF is volume decreasing: the normalized volume element \(\dd \tilde{\mu}_t = \sqrt{\det \tilde{g}}\dd x = e^{nt} \sqrt{\det g}\dd x\) satisfies 
\[ \begin{aligned}
    \pd{\det \tilde{g}}{t} &= \pd{}{t}\left(e^{2nt} \det g \right) \\
    &= e^{2nt}\left( \pd{\det g}{t} + 2n\det g \right) \\
    &= e^{2nt}\left( \pd{}{t}g_{ij} \cdot g^{ij} \cdot \det g + 2n\det g \right) \\
    &= 2e^{2nt}\det g \left( -\frac{H}{s_k^{1/k}} + n \right) \\
    &\leq 0,
\end{aligned}  \]
where we used the Newton-Maclaurin inequality \( s_1(\kappa)\geq s_k^{1/k}(\kappa) \). Especially, \(\pd{\det \tilde{g}}{t} \equiv 0\) when \(S = s_1^{-1}\), i.e. \(F(\kappa) = s_1(\kappa)\). Therefore, the volume of \(M / G\) converges to a constant \(V^\infty \geq 0\)(later we will see \(V^\infty \neq 0\)), uniquely determined by the initial co-compact hypersurface and the speed function \(F(\kappa)\).  
\par By Corollary~\ref{cor:higer-order estimates}, the normalized support function \(u(z,t)\) satisfies that for any \(k \geq 1\), there exist a positive constant \(C_k>0\) such that 
\[ \norm{u}_{C^1\left( (0,+\infty),C^k(\H^n / G) \right)} \leq C_k.\]
By Arzela-Ascoli theorem and a diagonal subsequence argument, for any \(\{t_n\}\to +\infty\), there exist a subsequence \(\{u(\cdot,t_{n_k})\}_{k=1}^{\infty}\) converges in \(C^{\infty}\)-topology to a limit function \(u^{\infty}(\cdot)\). Since \(u_t = -F_* + u \to 0\) as \(t\to +\infty\), any convergent subsequence of \(u(\cdot,t)\) will converge to a self-shrinker whose support function denoted by \(u^{\infty}(\cdot)\). By the rigidity (Theorem ~\ref{lem:rigidity}), we conclude that \(u^{\infty}(\cdot) \equiv r^\infty\) is a constant. Moreover, \(r^{\infty}\) and the corresponding limit spacelike hypersurface \(M^{\infty}\) should satisfy the volume relation: 
\begin{equation}\label{eq:volume}
0 < \operatorname{vol} (M^{\infty}/G) = V^{\infty} \leq \operatorname{vol} (M_0/G).
\end{equation}
Especially, the equality holds when \(F(\kappa) = s_1(\kappa)\). Hence \(u(\cdot,t)\) converges in \(C^{\infty}\)-topology to the constant \(r^{\infty}\) which satisfies \eqref{eq:volume}. \qed

\section{Application on Geometric Inequalities}
In this section, we prove Theorem \ref{thm:quermassintegral_ineq}. We define the isoperimetric ratio 
\begin{equation}
    \mathcal{I}_k(\overline{M}) = \frac{\abs{\overline{M}}^{\frac{1}{n}}}{\left( \int_{\overline{M}} \sigma_k(\kappa) \dd \mu_M \right)^{\frac{1}{n-k}}}
\end{equation}
This ratio is scaling-invariant: if \(\widetilde{M}\) is defined by a scaling of the position vector of \(M\), then \(\mathcal{I}_k(\widetilde{M}) = \mathcal{I}_k(M)\). 

Our goal is  to prove that for any standard co-compact convex spacelike hypersurface \(M\), we have
\begin{equation}
    \mathcal{I}_k(M / G) \geq \mathcal{I}_k(\H^n / G)
\end{equation}
where \(G\) is the Lorentz subgroup such that \(M / G\) is compact. 

\subsection{Some Calculations}
Let \(S(h,g) = -F^{-1}(h,g)\). By proposition \ref{prop:evolution_geo_quantaties}, we have 
\begin{equation}
    \pd{}{t}\dd \mu_g = -S\sigma_1(\kappa)\dd \mu_g,
\end{equation}
\begin{equation}
    \pd{}{t}h_j^i = -\nabla^i\nabla_j S + S(h^2)_j^i,
\end{equation}
and for \(1\leq m\leq n\),
\begin{equation}
    \begin{aligned}
        \pd{\sigma_m}{t} &= \sum_{i,j=1}^{n} \pd{\sigma_m}{h_j^i}\cdot\pd{}{t}h_j^i \\
        &= \sum_{i,j=1}^{n} \pd{\sigma_m}{h_j^i}\left( -\nabla^i\nabla_j S + S(h^2)_j^i \right) \\
        &= -\sum_{i,j=1}^{n} \nabla^i\left(\pd{\sigma_m}{h_j^i} \nabla_j S \right) + S\sum_{i,j=1}^{n} \pd{\sigma_m}{h_j^i}(h^2)_j^i,
    \end{aligned}
\end{equation}
where we have used the divergence-free property of \(\pd{\sigma_m}{h_j^i}\)(see \cite{R.Reilly1974} for reference).

\begin{lemma}\label{eq:evolution_integral_sigma_l}
    Under the flow 
    \begin{equation}\label{eq:inverse_sigma_k_flow}
        \pd{}{t}X(p,t) = -S(p,t)\nu(p,t)
    \end{equation}
    of a family of spacelike hypersurfaces \(X : M^n\times (0,+\infty) \to \R^{n,1}\), we have the following evolution equation: 
    \begin{equation}
        \partial_t \int_{\overline{M}} \sigma_l \dd \mu_g = -(l+1)\int_{\overline{M}} S\cdot \sigma_{l+1} \dd \mu_g,\quad l=1,2,\cdots,n-1
    \end{equation}
    where \(g\) is the induced metric on \(M\) from the ambient space. 
\end{lemma}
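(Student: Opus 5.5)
Working on the quotient \(\overline{M}=M/G\), which is compact and carries the flow because the speed \(S\) is \(G\)-invariant, I differentiate under the integral sign using the two formulas just derived:
\[
\partial_t \int_{\overline{M}} \sigma_l \,\dd\mu_g = \int_{\overline{M}} \Big( \pd{\sigma_l}{t} - S\sigma_1\sigma_l \Big)\dd\mu_g .
\]
Into this I substitute the divergence form
\[
\pd{\sigma_l}{t} = -\nabla^i\Big(\pd{\sigma_l}{h^i_j}\nabla_j S\Big) + S\,\pd{\sigma_l}{h^i_j}(h^2)^i_j .
\]
The divergence term integrates to zero over the closed manifold \(\overline{M}\), by the divergence theorem. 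There are no boundary terms, since every quantity is \(G\)-invariant and hence descends to \(\overline{M}\). It remains to handle the zero-order terms.

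For those, I diagonalize at a point, with \(h^i_j=\kappa_i\delta^i_j\). Then \(\pd{\sigma_l}{h^i_j}(h^2)^i_j=\sum_i \sigma_{l-1}(\kappa|i)\kappa_i^2\). The identity already quoted in the proof of Theorem~\ref{lem:rigidity} gives
\[
\sum_i \sigma_{l-1}(\kappa|i)\kappa_i^2=\sigma_1\sigma_l-(l+1)\sigma_{l+1}.
\]
The integrand therefore collapses to
\[
S\big(\sigma_1\sigma_l-(l+1)\sigma_{l+1}\big)-S\sigma_1\sigma_l=-(l+1)S\sigma_{l+1},
\]
which is the claim.

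The main point needing care is sign bookkeeping. The section redefines \(S=-F^{-1}\), while the flow \eqref{eq:inverse_sigma_k_flow} is written as \(\partial_t X=-S\nu\). I would recheck the formulas \(\partial_t \dd\mu_g=-S\sigma_1\dd\mu_g\) and \(\partial_t h^i_j=-\nabla^i\nabla_jS+S(h^2)^i_j\) directly from Proposition~\ref{prop:evolution_geo_quantaties}, taking \(S\) as a general speed. Concretely, \(\partial_t g_{ij}=-2Sh_{ij}\) gives \(\partial_t \dd\mu=-S\sigma_1\dd\mu\). Likewise, \(\partial_t h_{ij}=-\nabla_i\nabla_jS-S(h^2)_{ij}\) together with \(\partial_t g^{ij}=2Sh^{ij}\) gives \(\partial_t h^i_j=-\nabla^i\nabla_jS+S(h^2)^i_j\). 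These are consistent, so the computation above goes through for any smooth \(G\)-invariant speed \(S\).

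The remaining ingredient is the divergence-free property \(\nabla_i \pd{\sigma_l}{h^i_j}=0\), which follows from the Codazzi equation \eqref{eq:Codazzi-eq}. It holds in Minkowski space exactly as in the Euclidean case, so the citation of Reilly suffices.
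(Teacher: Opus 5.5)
Your proof is correct and follows essentially the same route as the paper. You differentiate under the integral on the compact quotient, discard the divergence term $\nabla^i\bigl(\pd{\sigma_l}{h^i_j}\nabla_j S\bigr)$ using the divergence-free Newton tensor, and reduce the zero-order terms with $\sum_i\sigma_{l-1}(\kappa|i)\kappa_i^2=\sigma_1\sigma_l-(l+1)\sigma_{l+1}$. Your sign check is also worthwhile: the paper's ``$S=-F^{-1}$'' is a slip (it later takes $S=1/s_k^{1/k}$), and, as you note, the identity holds for any smooth $G$-invariant speed $S$ under $\partial_t X=-S\nu$.
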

\begin{proof}
    \begin{equation}
        \begin{aligned}
            \partial_t \int_{\overline{M}} \sigma_l \dd \mu_g &= \int_{\overline{M}} (\partial_t \sigma_l) d\mu_g + \int_{\overline{M}} \sigma_l \partial_t(\dd \mu_g) \\
            &= \int_{\overline{M}} \left( -\sum_{i,j=1}^{n}\nabla^i \left(\pd{\sigma_l}{h_j^i}\nabla_j S\right) + S\sum_{i,j=1}^{n} \pd{\sigma_m}{h_j^i}(h^2)_j^i \right) \dd \mu_g \\
            &\quad + \int_{\overline{M}} \sigma_l(-S\sigma_1 \dd \mu_g) \\
            &= \int_{\overline{M}} S\left( \sum_{i,j=1}^{n} \pd{\sigma_l}{h_j^i}(h^2)_j^i - \sigma_1(\kappa)\sigma_l(\kappa) \right)\dd \mu_g \\
            &= -(l+1)\int_{\overline{M}} S\cdot \sigma_{l+1}\dd \mu_g.
        \end{aligned}
    \end{equation}
\end{proof}

\par {\bf Proof of Theorem \ref{thm:quermassintegral_ineq}:} Let \(X(p,t)\) be the solution of the flow \eqref{eq:inverse_sigma_k_flow} with \(S = \frac{1}{s_k^{1/k}(\kappa)}\) and initial surface \(M_0 = M\). Consider \(\widetilde{X} = e^{t}X\). Then the corresponding hypersurface of \(\widetilde{X}\) is still co-compact. Denote \(\widetilde{M} = \widetilde{X} / G\). By the scaling-invariance, 
\[ \mathcal{I}_k(\widetilde{M}) = \mathcal{I}_k(M), \quad \forall t \geq 0. \]
By Lemma \ref{eq:evolution_integral_sigma_l}, we have
\begin{equation}
    \begin{aligned}
        \partial_t \int_{\widetilde{M}} \sigma_l(\widetilde{\kappa}) \dd \widetilde{\mu_g} &= \partial_t \left( e^{(n-l)t}\int_{M} \sigma_l(\kappa) \dd \mu_g \right) \\
        &= e^{(n-l)t}\cdot \left((n-l)\int_M \sigma_l \dd \mu_g - (l+1)\int_{M}S\cdot \sigma_{l+1} \dd \mu_g \right).
    \end{aligned}
\end{equation}
Setting \(l=0,k\) and \(S = \frac{1}{s_k^{1/k}}\), we obtain
\begin{equation}
\begin{aligned}
    \partial_t \int_{\widetilde{M}} \sigma_k(\widetilde{\kappa}) \dd \widetilde{\mu_g} &= e^{(n-k)t}\left( (n-k)\int_{M}\sigma_k - (k+1) \int_{M}\frac{\sigma_{k+1}}{s_k^{1/k}} \right) \\
    &= e^{(n-k)t}\frac{n!}{(n-k-1)!k!}\left( \int_{M}s_k - \int_{M}\frac{s_{k+1}}{s_k^{1/k}} \right) \geq 0,
\end{aligned}
\end{equation}
and 
\begin{equation}\label{eq:integral_sigma_0_ineq}
    \begin{aligned}
        \partial_t \int_{\widetilde{M}} \sigma_0(\widetilde{\kappa}) \dd \widetilde{\mu_g} &= e^{nt}\left( n\int_M 1\dd\mu_g - \int_M \frac{\sigma_1}{s_k^{1/k}}\dd \mu_g \right) \\
        &= e^{nt}n\left( \int_M 1\dd\mu_g  - \int_M \frac{s_1}{s_k^{1/k}}\dd \mu_g \right) \leq 0,
    \end{aligned}
\end{equation}
where we have used the following Newton-Maclaurin inequality:
\[ s_1 \geq s_k^{1/k} \geq s_{k+1}^{1/(k+1)},\quad 1\leq k \leq n-1 . \]
Then we conclude that \(\mathcal{I}_1(\widetilde{M})(t)\) is decreasing in \(t\), and the desired inequality \eqref{eq:geo-ineq} follows from Theorem \ref{thm:convergence}. 
\par If the equality holds in \eqref{eq:geo-ineq}, we must have 
\[ \partial_t \int_{\widetilde{M}} \sigma_0(\widetilde{\kappa}) \dd \widetilde{\mu_g} \equiv 0. \]
Therefore, the equality of the Newton-Maclaurin inequality must be held at every point of \(M\) in \eqref{eq:integral_sigma_0_ineq}. This implies \(M\) is umbilical for each \(t \geq 0\). In particular, \(M_0\) is a part of \(\H^n\). \qed

\bigskip

Following Guan and Yuan's argument \cite{guan2025-intrinsicality}, we present the following lemma: 
\begin{lemma}\label{lem:intrincity}
    Let \(n \geq 3\), \((M^n,g)\) be a strictly convex spacelike hypersurface in Minkowski space \(\R^{n,1}\), then \(\sigma_k(\kappa[M])\) are intrinsic geometric quantities for all \(1\leq k \leq n\). 
\end{lemma}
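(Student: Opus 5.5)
The aim is to show that the symmetric functions of the principal curvatures can be recovered from the metric $g$ alone, following Guan--Yuan. By the Gauss equation \eqref{eq:Gauss-eq}, $R_{ijkl} = -(h_{ik}h_{jl}-h_{il}h_{jk})$. Diagonalizing at a point with $h_{ij}=\kappa_i\delta_{ij}$, the sectional curvatures are $K(e_i,e_j) = -\kappa_i\kappa_j$ for $i\neq j$. Every product $\kappa_i\kappa_j$ ($i\ne j$) is therefore intrinsic, and so are $\sigma_2$ and more generally all even combinations. The plan is to recover the individual $\kappa_i$, up to a global sign, from these pairwise products and then use strict convexity to fix the sign.

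Concretely, for $n\geq 3$ and distinct $i,j,l$ we have
\[
\kappa_i^2 = \frac{(\kappa_i\kappa_j)(\kappa_i\kappa_l)}{\kappa_j\kappa_l} = -\frac{K(e_i,e_j)K(e_i,e_l)}{K(e_j,e_l)}.
\]
Strict convexity gives $\kappa_i>0$, hence $K(e_j,e_l)<0$, so the denominator never vanishes. Then
\[
\kappa_i = \sqrt{-K_{ij}K_{il}/K_{jl}},
\]
where the positive root is the correct one. This is where $n\ge3$ is used.

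The obstacle is that the frame $\{e_i\}$ diagonalizing $h$ is defined extrinsically, so I still need an intrinsic characterization of it. I will handle this with the curvature operator. Under the Gauss equation, $\mathcal{R}$ acting on $\Lambda^2T_pM$ satisfies $\mathcal{R}(e_i\wedge e_j) = -\kappa_i\kappa_j\, e_i\wedge e_j$, up to sign conventions. Equivalently, the Ricci tensor is $\mathrm{Ric}_{ik} = -(\sigma_1 h_{ik} - (h^2)_{ik})$, which is diagonal in the same frame with eigenvalues $-\kappa_i(\sigma_1-\kappa_i)$.

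To avoid degeneracies in the eigenspaces, it is cleaner to phrase everything as invariant algebra. The symmetric $2$-tensor $W := \sqrt{\cdot}$ construction can be done intrinsically: define the tensor $A = h$ as the unique positive-definite symmetric tensor whose "wedge square" $\tfrac12 A\wedge A$ equals $-\mathrm{Riem}$ (Kulkarni--Nomizu form of the Gauss equation). So the key lemma is uniqueness: for $n\ge3$, if $A,B>0$ and $A\owedge A = B\owedge B$, then $A=B$. I would prove it by diagonalizing $A$. Then $(B\owedge B)$ has the same curvature operator, so $B_{ik}B_{jl}-B_{il}B_{jk}$ vanishes whenever $\{i,j\}\neq\{k,l\}$ as sets. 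Taking triples of indices, this forces the off-diagonal entries of $B$ to vanish. The diagonal entries then satisfy $b_ib_j = a_ia_j$ for all $i\ne j$, and the triple-ratio formula above gives $b_i=a_i$.

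Since $h$ is then determined pointwise by $\mathrm{Riem}_g$, so is $\sigma_k(\kappa)=\sigma_k(g^{-1}h)$ for every $k$. Moreover, $\sigma_k$ is invariant under local isometries: an isometry $\psi$ pulls $\mathrm{Riem}$ back and hence pulls $h$ back, by uniqueness.
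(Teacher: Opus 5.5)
Your argument is correct, but it takes a different route from the paper's.

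\textbf{The paper's route.} The paper never identifies $h$ itself. It works in a principal frame, uses $R_{ijij}=-\kappa_i\kappa_j$, and writes each $\sigma_k$ through the pairwise products $\kappa_\alpha\kappa_\beta$, case by case:
\begin{itemize}
\item $\sigma_{2m}$ is written directly as a polynomial in these products;
\item $\sigma_{2m+1}^2=\frac{1}{2m+1}\sum_i\sigma_{2m}(\kappa|i)\bigl[\kappa_i\sigma_{2m+1}(\kappa|i)+\kappa_i^2\sigma_{2m}(\kappa|i)\bigr]$ is such a polynomial;
\item $\sigma_1^2=2\sigma_2+\sigma_3^{-2}\sum_i(\kappa_i\sigma_3)^2$.
\end{itemize}
Convexity is then used to take positive square roots.

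\textbf{Your route.} You prove a Beez--Killing type uniqueness lemma: for $n\ge 3$, a positive definite symmetric $2$-tensor $A$ is determined by $A_{ik}A_{jl}-A_{il}A_{jk}$. Hence $h$ itself is a function of $(g,\mathrm{Riem}_g)$, and every $\sigma_k(g^{-1}h)$ follows at once with no parity cases.

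\textbf{Comparison.}
\begin{itemize}
\item Your route gives more: it recovers the whole second fundamental form, so any symmetric function of $\kappa$ is intrinsic and invariance under local isometries is immediate.
\item It also makes explicit a point the paper leaves implicit. The frame in which the paper evaluates $R_{ijij}$ is chosen extrinsically. Your triple-index argument shows that any orthonormal frame with $R_{ijkl}=0$ unless $\{i,j\}=\{k,l\}$ is already principal, and this is what makes the paper's frame-based formulas genuinely intrinsic.
\item What the paper's route buys is explicit expressions in the curvature. That is what the introduction relies on when it proposes making sense of $\int\sigma_k$ on abstract negatively curved manifolds. Your implicit characterization of $h$ only makes sense when $-\mathrm{Riem}$ already has the special algebraic form $\tfrac12(h_{ik}h_{jl}-h_{il}h_{jk})$-type.
\end{itemize}

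\textbf{Two small repairs.} First, delete the garbled sentence about $W:=\sqrt{\cdot}$; it plays no role. The curvature-operator and Ricci remarks are also unnecessary.

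Second, spell out the triple step. For distinct $i,j,l$, the vanishing mixed minors give
\[
B_{ii}B_{jl}=B_{ij}B_{il},\qquad B_{jj}B_{il}=B_{ij}B_{jl},\qquad B_{ll}B_{ij}=B_{il}B_{jl}.
\]
Multiplying two of these shows that if, say, $B_{il}B_{jl}\neq 0$, then $B_{ij}^2=B_{ii}B_{jj}$. This contradicts $B_{ii}B_{jj}-B_{ij}^2=a_ia_j>0$. So at least two of $B_{ij},B_{il},B_{jl}$ vanish, and the remaining relation forces the third to vanish as well.
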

\begin{proof}
    Consider an arbitrary point \(x_0\in M\) and choose a orthonormal frame \(\{e_i\}_{i=1}^n\) around \(x_0\), such that \((g_{ij}) = (\delta_{ij})\), and \( (h_{ij})= \diag{\kappa_1,\cdots,\kappa_n}\) at \(x_0\). Set \({R}(X,Y)Z = -\nabla_X\nabla_Y Z + \nabla_Y\nabla_X Z + \nabla_{[X,Y]}Z\), and \({R}_{ijkl} = g(R(e_i,e_j)e_k,e_l)\), \( {R}_{ikj}^{\quad p} = {R}_{ikjm}g^{mp} \). By Gauss equation in Minkowski space \eqref{eq:Gauss-eq}, 
    \[ R_{ijkl} = -(h_{ik}h_{jl} - h_{il}h_{jk}), \]
    which means \(R_{ijij} = - \kappa_i \kappa_j\) for \(i\neq j\). Then for \(k=2m\) (\(m \geq 1\)), \(\sigma_k(\kappa)\) is a combination of \(\kappa_\alpha \kappa_\beta\), therefore a combination of \(R_{\alpha\beta\alpha\beta}\), which is intrinsic. 
    \par For \(k = 2m+1\) (\(m\geq 1\)), 
    \[ \sigma_{2m+1}^2(\kappa) = \frac{1}{2m+1}\sum_{i=1}^{n}\sigma_{2m}(\kappa|i)\kappa_i \sigma_{2m+1}(\kappa). \]
    For each \(i\), 
    \[ \sigma_{2m}(\kappa|i)\kappa_i \sigma_{2m+1}(\kappa) = \sigma_{2m}(\kappa|i)[\kappa_i\sigma_{2m+1}(\kappa|i) + \kappa_i^2\sigma_{2m}(\kappa|i)]. \]
    Note that \(\sigma_{2m}(\kappa|i)\) is a combination of \(\kappa_\alpha \kappa_{\beta}\) for \(\alpha\neq \beta \neq i\), and also \(\kappa_i\sigma_{2m+1}(\kappa|i)\) is a combination of \(\kappa_\alpha\kappa_\beta\) and \(\kappa_\gamma\kappa_i\) for \(\alpha\neq \beta\neq i\) and \(\gamma\neq i\). Then both \(\sigma_{2m}(\kappa|i)\) and \(\kappa_i\sigma_{2m+1}(\kappa|i)\) can be expressed as a function of sectional curvatures \(R_{\alpha\beta\alpha\beta}\) and \(R_{\gamma i \gamma i}\). Moreover, for \(m\geq 1\), \(\kappa_i^2 \sigma_{2m}(\kappa|i)\) can be expressed in terms of \(\kappa_\alpha \kappa_\beta\), \(\kappa_\alpha\kappa_i\), and \(\kappa_\beta\kappa_i\) for \(\alpha\neq\beta\neq i\), which are all sectional curvatures of the metric \(g\). In conclusion, \(\sigma_{2m+1}(\kappa)\) (\(m\geq 1\)) are polynomials of the sectional curvatures, which are intrinsic. 
    \par For \(k=1\), 
    \[ \sigma_1^2(\kappa) = \sum_{i=1}^{n}\kappa_i^2 + 2\sigma_2(\kappa). \]
    Since \(\sigma_2(\kappa)\) is a polynomial of sectional curvatures, we focus on the first term
    \[ |\kappa|^2 = \sum_{i=1}^{n} \kappa_i^2 = \frac{1}{\sigma_3^2(\kappa)}\sum_{i=1}^{n}[\kappa_i \sigma_3(\kappa)]^2. \]
    For each \(i\), we have
    \[ \kappa_i \sigma_3(\kappa) = \kappa_i\sigma_3(\kappa|i) + \kappa_i^2\sigma_2(\kappa|i). \]
    Note that the right hand side is a combination of \(\kappa_\alpha\kappa_\beta\) and \(\kappa_\gamma\kappa_i\) for \(\alpha\neq \beta\neq i\) and \(\gamma\neq i\). Then \(\kappa_i \sigma_3(\kappa)\) is a combination of sectional curvatures \(R_{\alpha\beta\alpha\beta}\) and \(R_{\gamma i \gamma i}\). Combined with the intrinsicality of \(\sigma_3(\kappa)\) as we have already proved, one sees the intrinsicality of \(|\kappa|^2\). 
\end{proof}


\end{document}